\makeatletter \@addtoreset{equation}{section} \makeatother
\newcommand{\eref}[1]{(\ref{#1})}
\newcommand{\tref}[1]{Theorem \ref{#1}}
\newcommand{\pref}[1]{Proposition \ref{#1}}
\newcommand{\lref}[1]{Lemma \ref{#1}}
\newcommand{\cref}[1]{Corollary \ref{#1}}
\newcommand{\rref}[1]{Remark \ref{#1}}
\theoremstyle{plain} \newtheorem{thm}{Theorem}[section] \newtheorem{lem}{Lemma}[section] \newtheorem{prop}{Proposition}[section] \newtheorem{cor}{Corollary}[section]
\theoremstyle{definition} \newtheorem{rem}{Remark}[section] 
\colorlet{RED}{red}
\title[Regularity of Bingham problem]{$H^2$-regularity for stationary and non-stationary Bingham problems\\with perfect slip boundary condition}
\author[Takeshi Fukao]{Takeshi Fukao}
\address{Faculty of Advanced Science and Technology, Ryukoku University, 1-5 Yokotani, Seta Oe-cho, Otsu-shi, 520-2194 Shiga, Japan}
\email{fukao@math.ryukoku.ac.jp}
\author[Takahito Kashiwabara]{Takahito Kashiwabara}
\address{Graduate School of Mathematical Sciences, The University of Tokyo, 3-8-1 Komaba, Meguro, 153-8914 Tokyo, Japan}
\email{tkashiwa@ms.u-tokyo.ac.jp}
\subjclass[2020]{Primary: 76D03, 76A05.}
\keywords{Bingham fluid; variational inequality; regularity up to boundary; slip boundary condition}
\begin{document}
\begin{abstract}
	$H^2$-spatial regularity of stationary and non-stationary problems for Bingham fluids formulated with the pseudo-stress tensor is discussed.
	The problem is mathematically described by an elliptic or parabolic variational inequality of the second kind, to which weak solvability in the Sobolev space $H^1$ is well known.
	However, higher regularity up to the boundary in a bounded smooth domain seems to remain open.
	This paper indeed shows such $H^2$-regularity if the problems are supplemented with the so-called perfect slip boundary condition and if the yield stress vanishes on the boundary.
	For the stationary Bingham--Stokes problem, the key of the proof lies in a priori estimates for a regularized problem avoiding investigation of higher pressure regularity, which seems difficult to get in the presence of a singular diffusion term.
	The $H^2$-regularity for the stationary case is then directly applied to establish strong solvability of the non-stationary Bingham--Navier--Stokes problem, based on discretization in time and on the truncation of the nonlinear convection term.
\end{abstract}
\maketitle

\section{Introduction}
Bingham fluid, one of non-Newtonian fluids, is a model describing the motion of viscoplastic materials, which behave like viscous fluid if shear stress magnitude exceeds a threshold, called the yield stress, and like rigid bodies otherwise.
Examples of viscoplastic materials are sauces, pastes, crude oils, drilling muds, slurries, lava, etc., understanding of which is important in several fields of industry or physics.

Let $\Omega \subset \mathbb R^d \, (d = 2, 3)$ be a bounded domain with the smooth boundary $\Gamma := \partial\Omega$.
We consider the following boundary value problem:
\begin{align}
	&- \operatorname{div} (\bm T^D - p \bm I) = \bm f, \quad \operatorname{div} \bm u = 0 \quad\text{in}\quad \Omega, \label{eq: strong form} \\
	&\quad \begin{cases}
		\bm T^D = \nu \nabla \bm u + g \dfrac{\nabla \bm u}{|\nabla \bm u|} &\text{if}\quad \nabla \bm u \neq \bm O, \\
		|\bm T^D| \le g &\text{if}\quad \nabla \bm u = \bm O,
	\end{cases} \label{eq: TD} \\
	&\bm u \cdot \bm n = 0, \quad (\bm T^D \bm n)_\tau = \bm 0 \quad\text{on}\quad \Gamma, \label{eq: slipBC}
\end{align}
which we refer to as the \emph{stationary Bingham--Stokes} problem (note that it reduces to the standard Stokes equations if $g = 0$), and the following initial boundary value problem:
{\allowdisplaybreaks \begin{align}
	& \frac{\partial \bm u}{\partial t}+ (\bm u \cdot \nabla ) \bm u- \operatorname{div} (\bm T^D - p \bm I) = \bm f, \quad \operatorname{div} \bm u = 0 \quad\text{in}\quad Q:=(0,T) \times \Omega, \label{eeq: strong form} \\
	&\quad \begin{cases}
		\bm T^D = \nu \nabla \bm u + g \dfrac{\nabla \bm u}{|\nabla \bm u|} &\text{if}\quad \nabla \bm u \neq \bm O, \\
		|\bm T^D| \le g &\text{if}\quad \nabla \bm u = \bm O,
	\end{cases} \label{eeq: TD} \\
	&\bm u \cdot \bm n = 0, \quad (\bm T^D \bm n)_\tau = \bm 0 \quad\text{on}\quad \Sigma:=(0,T) \times \Gamma, \label{eeq: slipBC} \\
	& \bm u(0) = \bm u_0 \quad \text{in} \quad \Omega, \label{eeq: IC}
\end{align}}
which we refer to as the \emph{non-stationary Bingham--Navier--Stokes} problem.
Here, the unknown functions $\bm T^D$, $\bm u$, and $p$ mean the deviatoric part of the stress tensor ($\nu > 0$ is the viscosity constant), the velocity, and the pressure of a material, respectively.
Then $\bm T = -p \bm I + \bm T^D$ constitutes the total stress tensor with $\bm I$ the identity matrix.
For matrices $\bm A, \bm B \in \mathbb R^{d \times d}$, we define the inner product and magnitude by $\bm A : \bm B = \sum_{i, j = 1}^d A_{ij} B_{ij}$ and $|\bm A| = (\bm A : \bm A)^{1/2}$, respectively.
The external force $\bm f$ and the yield stress $g \ge 0$ are given functions.

Let us discuss the stationary problem for a while.
Compared with the standard Bingham fluid model (see e.g.  \cite[Section VI.1.3]{DuLi1976}), problem \eref{eq: strong form}--\eref{eq: TD} is simplified in the sense that we exploit the \emph{pseudo stress tensor}, i.e, we use $\nabla \bm u = (\partial u_i/\partial x_j)_{1\le i, j \le d}$ instead of the rate-of-strain tensor $\bm D(\bm u) = (\nabla \bm u + (\nabla \bm u)^\top)/2$ in the constitutive law.
This facilitates technical arguments in our analysis (especially, the coercivity relation is simpler than the one obtained from Korn's inequality, cf.\ \eref{eq: coercivity} below), still retaining mathematical difficulties arising from the Bingham nonlinearity.

The boundary condition \eref{eq: slipBC} is the perfect slip boundary condition, where $\bm n$ is the outer unit normal to $\Gamma$ and the subscript $\tau$ means the tangential part of a vector (namely, $\bm v_\tau = \bm v - (\bm v \cdot \bm n) \bm n$).
In view of \eref{eq: TD}, it reduces to
\begin{equation} \label{eq2: slipBC}
	\bm u \cdot \bm n = 0, \quad (\nabla \bm u \, \bm n)_\tau = \bm 0 \quad\text{on}\quad \Gamma.
\end{equation}

By adopting the theory presented in the classical literature \cite[Chapter VI]{DuLi1976}, we can easily find that \eref{eq: strong form}--\eref{eq: slipBC} is formulated as a variational inequality problem and that it admits a unique weak solution $\bm u \in \bm H^1(\Omega)$.
However, the higher regularity, e.g., $\bm u \in \bm H^2(\Omega)$ seems much less understood.
For the scalar Bingham problem without the divergence-free constraint (also known as Bingham flows in cylinders), the $H^2$-regularity is established by \cite[Theorem 15]{Bre1971} using a sophisticated estimate for a resolvent-like problem for the Laplace operator.
We also refer to recent results \cite{AMS2023, AMS2024}, which show $H^2(\Omega)$-regularity for a scalar pseudo-parabolic problem with a singular diffusion term (similar to the Bingham problem).
The strategy of \cite{AMS2023} has several things in common with ours; e.g., regularity for a stationary problem is obtained first, which is then incorporated to examine an evolution problem via discretization in time.
From another perspective, partial interior $C^1$-regularity, as well as interior $H^2$-regularity (i.e.\ $\bm u \in \bm H^2_{\mathrm{loc}}(\Omega)$), is obtained in \cite{FuSe1998, FuSe2000} for the stationary Bingham flow with the convective term.
See also \cite{Kim1989} which mentions some interior $H^2$-regularity for the non-stationary problem.
More recently, the $C^1(\Omega)$-regularity is established for the scalar case by \cite{Tsu2024}.

On the other hand, it seems that there are no results of $H^2$-regularity \emph{up to the boundary} when the divergence-free constraint is imposed, that is,  the pressure $p$ appears as an unknown variable.
A major difficulty consists in obtaining higher regularity than $L^2(\Omega)$ for $p$.
In fact, as is seen from the linear Stokes case, we usually need to establish higher tangential regularity of $p$ before discussing the regularity of $\bm u$ in the normal direction (see \cite[pp.\ 24--25]{CoFo1989} and \cite[p.\ 1104]{BdV04}).
However, this strategy seems infeasible for the Bingham case, which formally reads 
$\nabla p = \bm f - \nu \Delta \bm u - \operatorname{div}(g \nabla \bm u/|\nabla \bm u| )$, since the last term is too singular to be even a measurable function near the region where $\nabla\bm u$ vanishes.

If the no-slip boundary condition is imposed, it is not trivial to settle this issue.
Thereby we suggest to replace it by the perfect slip one \eref{eq: slipBC} or \eref{eq2: slipBC} and indeed prove the regularity $\bm u \in \bm H^2(\Omega)$ up to the boundary if $g \in H^1_0(\Omega)$, which is the first objective of the present paper (see \cref{cor: main result} below).
In the proof, we introduce a regularized problem, transform it into a half-space problem, and estimate tangential and normal second-order derivatives.
The key observation is that for \eref{eq2: slipBC} one can obtain the regularity of velocity in the normal direction, knowing only lower regularity for the pressure (in other words, higher regularity is shown only for $\bm u$ but not for $p$).

Let us make a comment on the assumption $g = 0$ on $\Gamma$, which is required for a technical reason (see \rref{rem: H^1_0 is inevitable}(ii)).
We emphasize that, even with it the issue of $\bm H^2$-regularity up to the boundary remains non-trivial.
In fact, once we are away from the boundary and $g > 0$---no matter how small the distance or $g$ is---the effect of singular diffusion becomes present, which cannot be reduced to an interior case or to small perturbation from the standard Stokes equations.
However, it is true that this assumption excludes some important case, e.g.\ $g = $ const., which is left as an open problem.

The latter part of this paper (Section \ref{sec: non-stationary}) deals with the non-stationary Bingham--Navier--Stokes problem.
In this regard we notice that preceding studies \cite{BL14, DuLi1976, Fukao2013, Kato1993, Kim1987, MMD09, NW79} considered only one spatial differentiability whereas $H^2$-regularity in the two-dimensional whole space was obtained in \cite{Kim1986}.
By directly applying \cref{cor: main result}, we will prove that there exists a unique local-in-time solution $\bm u$ having the $H^2$-regularity---more precisely, $\bm u \in W^{1, \infty}(0, T; \bm L^2(\Omega)) \cap L^\infty(0, T; \bm H^2(\Omega))$ in \tref{thm: evolution} and $\bm u \in H^1(0, T; \bm L^2(\Omega)) \cap L^2(0, T; \bm H^2(\Omega))$ in \tref{thm: evolution weak}---depending on assumptions for the data $\bm f, g$, and $\bm u_0$.
The proof is based on the discretization in time (also known as Rhothe's method) combined with the truncation technique for the convection term as in \cite{Bar10}.

Briefly speaking, our well-posedness result tells us that, if the yield stress $g$ is first-order differentiable with respect to space and time, then we can construct an $H^2$-strong solution.
As one of its possible applications and future studies, we mention non-isothermal Bingham fluids involving heat transfer, in which the yield stress and the viscosity parameter may depend on temperature (cf.\ \cite{Gonz2020, Mes14}).

We use the standard Lebesgue spaces $L^p(\Omega)$ and Sobolev spaces $W^{m, p}(\Omega)$ for $p \in [1, \infty]$ and $m \in \mathbb N$.
Those spaces on the boundary, in particular the Sobolev--Slobodeckij space $W^{s, p}(\Gamma)$ will also be utilized.
Function spaces for vectors are indicated with bold fonts, e.g., $\bm L^p(\Omega), \, \bm W^{m, p}(\Omega)$.
Those for tensors (matrices) are also indicated by the same symbols; nevertheless, in case confusion may occur, we write $L^p(\Omega)^{d \times d}$ etc.
Throughout this paper, we let $C$ denote a generic positive constant that may differ at each occurrence.
When we need to specify its dependency on parameters, we indicate $C = C(\Omega)$ etc.

\section{Weak formulations for the stationary problem}
In this and next sections, let us consider the stationary Bingham--Stokes problem \eref{eq: strong form}--\eref{eq: slipBC}.
For simplicity in the presentation, we assume $\nu = 1$.
Let us derive weak formulations and recall unique existence of a weak solution, which is already known.
For that purpose we introduce the following Hilbert spaces:
\begin{align*}
	\bm V &= \{ \bm v \in \bm H^1(\Omega) \mid \bm v \cdot \bm n = 0 \; \text{ on } \Gamma \}, \\
	\bm V_\sigma &= \{ \bm v \in \bm V \mid \operatorname{div} \bm v = 0 \}, \quad \|\bm v\|_{\bm V} = \|\bm v\|_{\bm V_\sigma} := \|\bm v\|_{\bm H^1(\Omega)}, \\
	\bm H_\sigma &= \{ \bm v \in \bm L^2(\Omega) \mid \operatorname{div} \bm v = 0, \quad \bm v \cdot \bm n = 0 \; \text{ on } \Gamma \}, \quad \|\bm v\|_{\bm H_\sigma} := \|\bm v\|_{\bm L^2(\Omega)}, \\
	L^2_0(\Omega) &= \Bigl\{ q \in L^2(\Omega) \mid \int_\Omega q \, d\bm x = 0 \Bigr\}.
\end{align*}
We regard $\bm V_\sigma \subset \bm H_\sigma = \bm H_\sigma' \subset \bm V_\sigma'$ as a Gelfand triple, where the prime means the dual norm space.
The $L^2(\Omega)$-inner products for scalar-, vector-, and matrix-valued functions are denoted by the same symbol $(\cdot, \cdot)$.
The duality pairing between $\bm V_\sigma'$ and $\bm V_\sigma$ is written as $\left< \cdot, \cdot \right>$.

With this setting, we show that a solution of the Bingham problem satisfies a variational inequality.

\begin{prop} \label{prop: VIsigma}
	(i) Let $(\bm u, p)$ be a sufficiently smooth solution of \eref{eq: strong form}--\eref{eq: slipBC}.
	Then we have
	\begin{equation} \label{eq: VI}
		(\nabla \bm u, \nabla (\bm v - \bm u)) + (g, |\nabla \bm v| - |\nabla \bm u|) \ge \left< \bm f, \bm v - \bm u \right> \qquad \forall \bm v \in \bm V_\sigma.
	\end{equation}
	
	(ii) Let $\bm f \in \bm V_\sigma'$ and $g \in L^2(\Omega), \, g \ge 0$.
	Then there exists a unique solution $\bm u \in \bm V_\sigma$ of \eref{eq: VI}.
\end{prop}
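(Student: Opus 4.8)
The plan is to obtain (i) directly from the strong form by an integration by parts, and to derive (ii) by identifying \eref{eq: VI} as the Euler--Lagrange inequality of a convex minimization problem on $\bm V_\sigma$.

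For (i), I would fix $\bm v \in \bm V_\sigma$, take the $\bm L^2(\Omega)$-inner product of the momentum equation $-\operatorname{div}(\bm T^D - p\bm I) = \bm f$ with $\bm v - \bm u$, and integrate by parts (legitimate since $(\bm u,p)$ is assumed smooth). Since $\operatorname{div}\bm v = \operatorname{div}\bm u = 0$, the interior pressure term $-(p, \operatorname{div}(\bm v - \bm u))$ vanishes, leaving $(\bm T^D, \nabla(\bm v - \bm u))$ plus the boundary integral over $\Gamma$ of $\bigl((\bm T^D - p\bm I)\bm n\bigr)\cdot(\bm v - \bm u)$. On $\Gamma$ the vector $\bm v - \bm u$ is tangential, because $\bm v\cdot\bm n = \bm u\cdot\bm n = 0$, so that integrand equals $(\bm T^D\bm n)_\tau\cdot(\bm v - \bm u)$, which is zero by the perfect slip condition $(\bm T^D\bm n)_\tau = \bm 0$ in \eref{eq: slipBC}. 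Hence $\langle \bm f, \bm v - \bm u\rangle = (\bm T^D, \nabla(\bm v - \bm u))$, and it remains to bound $\bm T^D : \nabla(\bm v - \bm u)$ pointwise a.e.\ using the constitutive law \eref{eq: TD}. On $\{\nabla\bm u \neq \bm O\}$ one decomposes $\bm T^D : \nabla(\bm v - \bm u) = \nabla\bm u : \nabla(\bm v - \bm u) + g\,|\nabla\bm u|^{-1}\bigl(\nabla\bm u : \nabla(\bm v - \bm u)\bigr)$ and uses Cauchy--Schwarz, $\nabla\bm u : \nabla\bm v \le |\nabla\bm u|\,|\nabla\bm v|$, to bound the second summand by $g\bigl(|\nabla\bm v| - |\nabla\bm u|\bigr)$; on $\{\nabla\bm u = \bm O\}$ one uses $|\bm T^D| \le g$ to get $\bm T^D : \nabla\bm v \le g|\nabla\bm v| = g\bigl(|\nabla\bm v| - |\nabla\bm u|\bigr)$, while $\nabla\bm u : \nabla(\bm v - \bm u)$ vanishes there. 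Integrating over $\Omega$ and combining with the identity above yields exactly \eref{eq: VI}.

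For (ii), I would study the functional
\[
	J(\bm v) := \tfrac12 \|\nabla\bm v\|_{\bm L^2(\Omega)}^2 + j(\bm v) - \langle \bm f, \bm v\rangle , \qquad j(\bm v) := \int_\Omega g\,|\nabla\bm v|\,d\bm x ,
\]
on the Hilbert space $\bm V_\sigma$. Since $g \in L^2(\Omega)$, $g \ge 0$, and $|\nabla\bm v| \in L^2(\Omega)$, the term $j$ is finite, nonnegative, convex, and continuous on $\bm V_\sigma$; together with the strictly convex quadratic term and the continuous linear term, $J$ is strictly convex, continuous, and coercive---the last property following from $j \ge 0$ and the Poincaré-type estimate $\|\bm v\|_{\bm H^1(\Omega)} \le C\|\nabla\bm v\|_{\bm L^2(\Omega)}$ valid on $\bm V$ (cf.\ \eref{eq: coercivity}). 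Hence $J$ admits a unique minimizer $\bm u \in \bm V_\sigma$. To pass from minimization to the variational inequality, for arbitrary $\bm v \in \bm V_\sigma$ I would insert $\bm u_t := (1-t)\bm u + t\bm v \in \bm V_\sigma$ into $0 \le J(\bm u_t) - J(\bm u)$, use the convexity bound $j(\bm u_t) \le (1-t)j(\bm u) + t\,j(\bm v)$, divide by $t > 0$, and let $t \to 0^+$; this gives $0 \le (\nabla\bm u, \nabla(\bm v - \bm u)) - \langle \bm f, \bm v - \bm u\rangle + j(\bm v) - j(\bm u)$, i.e.\ \eref{eq: VI}.

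Uniqueness can also be read off \eref{eq: VI} directly: given two solutions $\bm u_1, \bm u_2$, testing the inequality for $\bm u_1$ with $\bm v = \bm u_2$ and that for $\bm u_2$ with $\bm v = \bm u_1$ and adding, the contributions of $g$ and of $\bm f$ cancel, leaving $-\|\nabla(\bm u_1 - \bm u_2)\|_{\bm L^2(\Omega)}^2 \ge 0$, so that $\nabla(\bm u_1 - \bm u_2) = \bm O$ and hence $\bm u_1 = \bm u_2$ by \eref{eq: coercivity}. The statement is essentially classical (cf.\ \cite[Chapter VI]{DuLi1976}); the only point needing genuine care is the coercivity of $J$ on $\bm V_\sigma$, which is comparatively cheap here precisely because the pseudo-stress formulation uses the full gradient $\nabla\bm v$, whose kernel consists only of constants---which intersect $\bm V$ trivially---so that no Korn-type inequality is required, exactly the simplification alluded to around \eref{eq: coercivity}.
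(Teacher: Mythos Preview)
Your proof is correct and follows essentially the same approach as the paper. For (i) the paper does exactly your integration-by-parts reduction to $(\bm T^D,\nabla(\bm v-\bm u))=(\bm f,\bm v-\bm u)$ and then uses the same pointwise estimate $\bm T^D:\nabla\bm v\le g|\nabla\bm v|+\nabla\bm u:\nabla\bm v$ together with $\bm T^D:\nabla\bm u=g|\nabla\bm u|+|\nabla\bm u|^2$; for (ii) the paper simply cites \cite[Theorem~I.4.1]{Glo2008} after noting the continuity and convexity of $j$ and the coercivity \eref{eq: coercivity}, which is precisely the abstract result your direct minimization argument reproves.
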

\begin{proof}
	(i) It follows from integration by parts that $(\bm T^D(\bm u), \nabla \bm v) = (\bm f, \bm v)$ for all $\bm v \in \bm V_\sigma$, and hence
	\begin{align*}
		(\nabla \bm u, \nabla (\bm v - \bm u)) + (g, |\nabla \bm v| - |\nabla \bm u|) - (\bm f, \bm v - \bm u)
		= (g, |\nabla \bm v| - |\nabla \bm u|) - (\bm T^D(\bm u) - \nabla \bm u, \nabla (\bm v - \bm u)).
	\end{align*}
	The right-hand side $\ge 0$ because $(\bm T^D(\bm u) - \nabla \bm u) : \nabla \bm v \le g|\nabla \bm v|$ and $(\bm T^D(\bm u) - \nabla \bm u) : \nabla \bm u = g|\nabla \bm u|$ in $\Omega$ by \eref{eq: TD}.
	
	(ii) Observe that the functional $j$ defined by $j(\bm v) = (g, |\nabla \bm v|)$ is continuous from $\bm V$ to $\mathbb R$ and convex, and that the following coercivity holds:
	\begin{equation} \label{eq: coercivity}
		\|\nabla \bm v\|_{\bm L^2(\Omega)}^2 \ge C \|\bm v\|_{\bm H^1(\Omega)}^2 \qquad \forall \bm v \in \bm V.
	\end{equation}
	Then a well-known existence theory of elliptic variational inequalities (see \cite[Theorem I.4.1]{Glo2008}) implies the desired result.
\end{proof}
\begin{rem}
	There holds a Poincar\'e inequality $\|\bm v\|_{\bm L^2(\Omega)} \le C\|\nabla \bm v\|_{\bm L^2(\Omega)}$ for $\bm v \in \bm H^1(\Omega)$ such that $\bm v \cdot \bm n = 0$ on $\Gamma$, which may be proved in a similar way to \cite[Lemma 2.3]{BdV04}.
	In fact, a constant vector field $\bm v$ satisfying $\bm v \cdot \bm n = 0$ on $\Gamma$ must be zero since the Gauss map $\bm n$ is surjective from a compact manifold $\Gamma$ onto the $(d-1)$-dimensional sphere.
\end{rem}

The converse statement of \pref{prop: VIsigma} is not trivial to prove.
We achieve it by constructing a multiplier based on a regularized variational inequality introduced for $\epsilon > 0$ as follows:
\begin{equation} \label{eq: VI eps}
	(\nabla \bm u_\epsilon, \nabla (\bm v - \bm u_\epsilon)) + (g, \sqrt{|\nabla \bm v|^2 + \epsilon^2} - \sqrt{|\nabla \bm u_\epsilon|^2 + \epsilon^2}) \ge \left< \bm f, \bm v - \bm u_\epsilon \right> \quad \forall \bm v \in \bm V_\sigma.
\end{equation}
\begin{prop} \label{prop: VI eps,sigma}
	(i) Let $\bm f \in \bm V_\sigma'$ and $g \in L^2(\Omega), \, g \ge 0$.
	Then there exists a unique solution $\bm u_\epsilon \in \bm V_\sigma$ of \eref{eq: VI eps}.
	Moreover, it satisfies the following variational equation:
	\begin{equation} \label{eq: VE eps,sigma}
		(\nabla \bm u_\epsilon, \nabla \bm v) + \Big( g \frac{\nabla \bm u_\epsilon}{ \sqrt{|\nabla \bm u_\epsilon|^2 + \epsilon^2} }, \nabla \bm v \Big) = \left< \bm f, \bm v \right> \qquad \forall \bm v \in \bm V_\sigma.
	\end{equation}
	
	(ii) We have $\|\bm u_\epsilon\|_{\bm H^1(\Omega)} \le C \|\bm f\|_{\bm V_\sigma'}$.
	Moreover, when $\epsilon \to 0$, $\bm u_\epsilon$ converges to the solution $\bm u$ of \eref{eq: VI} strongly in $\bm V_\sigma$.
\end{prop}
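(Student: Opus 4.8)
The plan is to realize \eref{eq: VI eps} as the Euler--Lagrange condition of a smooth, strictly convex minimization problem. First I would set
\[
	J_\epsilon(\bm v) = \frac12 \|\nabla \bm v\|_{\bm L^2(\Omega)}^2 + \bigl( g, \sqrt{|\nabla \bm v|^2 + \epsilon^2}\,\bigr) - \langle \bm f, \bm v \rangle, \qquad \bm v \in \bm V_\sigma .
\]
Since $\bm A \mapsto \sqrt{|\bm A|^2 + \epsilon^2}$ is convex and $g \ge 0$, the functional $J_\epsilon$ is continuous and strictly convex on the Hilbert space $\bm V_\sigma$; moreover $\bigl(g, \sqrt{|\nabla\bm v|^2+\epsilon^2}\,\bigr) \ge 0$, so the coercivity \eref{eq: coercivity} forces $J_\epsilon(\bm v) \to +\infty$ as $\|\bm v\|_{\bm H^1(\Omega)} \to \infty$. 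Hence $J_\epsilon$ admits a unique minimizer $\bm u_\epsilon \in \bm V_\sigma$, which—because $\bm V_\sigma$ is a linear subspace and $J_\epsilon$ is convex—is equivalently characterized by \eref{eq: VI eps}. To upgrade \eref{eq: VI eps} to the variational equation \eref{eq: VE eps,sigma}, I would verify that $\bm v \mapsto \bigl(g, \sqrt{|\nabla\bm v|^2+\epsilon^2}\,\bigr)$ is G\^ateaux differentiable on $\bm V_\sigma$ with derivative $\bm w \mapsto \bigl(g\,\nabla\bm v/\sqrt{|\nabla\bm v|^2+\epsilon^2},\, \nabla\bm w\bigr)$: the difference quotients converge by dominated convergence, being controlled pointwise by $|g|\,|\nabla\bm w|$ thanks to $\bigl|\frac{d}{ds}\sqrt{|\bm A+s\bm B|^2+\epsilon^2}\bigr| \le |\bm B|$, and the candidate derivative is well defined on $\bm L^2(\Omega)$ since $g\,\nabla\bm v/\sqrt{|\nabla\bm v|^2+\epsilon^2}$ is dominated pointwise by $|g| \in L^2(\Omega)$. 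Taking $\bm v = \bm u_\epsilon \pm t\bm w$ in \eref{eq: VI eps}, dividing by $t>0$, and letting $t\downarrow 0$ then yields \eref{eq: VE eps,sigma} for every $\bm w \in \bm V_\sigma$, which proves (i).

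For (ii), testing \eref{eq: VE eps,sigma} with $\bm v = \bm u_\epsilon$ gives $\|\nabla\bm u_\epsilon\|_{\bm L^2(\Omega)}^2 + \bigl(g\,|\nabla\bm u_\epsilon|^2/\sqrt{|\nabla\bm u_\epsilon|^2+\epsilon^2},\,1\bigr) = \langle\bm f,\bm u_\epsilon\rangle$, and since the second term on the left is nonnegative (here $g\ge0$ is essential) we get $\|\nabla\bm u_\epsilon\|_{\bm L^2(\Omega)}^2 \le \|\bm f\|_{\bm V_\sigma'}\|\bm u_\epsilon\|_{\bm H^1(\Omega)}$; absorbing $\|\bm u_\epsilon\|_{\bm H^1(\Omega)}$ via \eref{eq: coercivity} yields $\|\bm u_\epsilon\|_{\bm H^1(\Omega)} \le C\|\bm f\|_{\bm V_\sigma'}$. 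For the convergence I would avoid passing to weak limits and instead compare $\bm u_\epsilon$ directly with the solution $\bm u$ of \eref{eq: VI}: choosing $\bm v = \bm u$ in \eref{eq: VI eps} and $\bm v = \bm u_\epsilon$ in \eref{eq: VI} and adding the two inequalities, the terms involving $\bm f$ cancel and one is left with
\[
	\|\nabla(\bm u_\epsilon - \bm u)\|_{\bm L^2(\Omega)}^2 \le \Bigl( g,\; \sqrt{|\nabla\bm u|^2+\epsilon^2} - |\nabla\bm u| + |\nabla\bm u_\epsilon| - \sqrt{|\nabla\bm u_\epsilon|^2+\epsilon^2}\,\Bigr).
\]
Because $0 \le \sqrt{|\bm A|^2+\epsilon^2} - |\bm A| \le \epsilon$ for every matrix $\bm A$, the right-hand side is bounded by $\epsilon\,(g,1) = \epsilon\,\|g\|_{L^1(\Omega)} \to 0$; therefore $\nabla\bm u_\epsilon \to \nabla\bm u$ in $\bm L^2(\Omega)$, and \eref{eq: coercivity} promotes this to strong convergence $\bm u_\epsilon \to \bm u$ in $\bm V_\sigma$.

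The argument is largely routine; the two points that deserve care are the genuine G\^ateaux differentiability of the regularized penalty on the whole of $\bm V_\sigma$—which is what turns the inequality \eref{eq: VI eps} into the equation \eref{eq: VE eps,sigma} and relies on the uniform pointwise bound $\bigl|\nabla\bm v/\sqrt{|\nabla\bm v|^2+\epsilon^2}\bigr| \le 1$ together with a dominated-convergence step—and the bookkeeping of signs that keeps the $g$-dependent terms controlled uniformly in $\epsilon$, which is precisely where the hypotheses $g\ge0$ and $g\in L^2(\Omega)\subset L^1(\Omega)$ are used. Everything else is soft functional analysis, and no regularity of $\bm u_\epsilon$ beyond $\bm H^1$ is needed at this stage.
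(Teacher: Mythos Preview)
Your proof is correct and follows essentially the same route as the paper: existence via convexity/coercivity (the paper cites the abstract variational-inequality theory rather than writing the minimization explicitly, but this is the same content), then G\^ateaux differentiability of $j_\epsilon$ together with the test functions $\bm u_\epsilon \pm s\bm v$ to obtain \eref{eq: VE eps,sigma}, and for (ii) the identical cross-testing of \eref{eq: VI} and \eref{eq: VI eps} with the bound $0\le \sqrt{|\bm A|^2+\epsilon^2}-|\bm A|\le\epsilon$. Your additional remarks on the dominated-convergence justification of the G\^ateaux derivative are a welcome elaboration of a step the paper leaves implicit.
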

\begin{proof}
	(i) Unique existence of $\bm u_\epsilon$ is immediately obtained as in \pref{prop: VIsigma}(ii).
	Let $j_\epsilon : \bm V \to \mathbb R$ be defined by $j_\epsilon(\bm v) = (g, \sqrt{|\nabla \bm v|^2 + \epsilon^2})$.
	It is G\^ateaux differentiable and we get
	\begin{equation*}
		j_\epsilon'(\bm v)[\bm w] = \lim_{s \to 0} \frac{j_\epsilon(\bm v + s \bm w) - j_\epsilon(\bm v)}{s} = \int_\Omega g \frac{\nabla \bm v}{ \sqrt{|\nabla \bm v|^2 + \epsilon^2} } : \nabla \bm w \, d\bm x \quad (\bm v, \bm w \in \bm V).
	\end{equation*}
	Now take $\bm u_\epsilon + s \bm v$ with $s \gtrless 0$ as a test function in \eref{eq: VI eps} to find
	\begin{equation*}
		(\nabla \bm u_\epsilon, \nabla \bm v) + \frac{j_\epsilon(\bm u_\epsilon + s \bm v) - j_\epsilon(\bm u_\epsilon)}{s} \gtreqless \left< \bm f, \bm v \right>.
	\end{equation*}
	Making $s \to \pm0$ deduces the desired equality.
	
	(ii) The $\bm H^1(\Omega)$-estimate for $\bm u_\epsilon$ immediately results from taking $\bm v = \bm u_\epsilon$ in \eref{eq: VE eps,sigma} together with the coercivity \eref{eq: coercivity}.
	Next, taking $\bm v = \bm u_\epsilon$ in \eref{eq: VI} and $\bm v = \bm u$ in \eref{eq: VI eps} respectively and adding the resulting inequalities, we obtain
	\begin{equation*}
		\|\nabla(\bm u - \bm u_\epsilon)\|_{\bm L^2(\Omega)}^2 \le j_\epsilon(\bm u) - j(\bm u) + j(\bm u_\epsilon) - j_\epsilon(\bm u_\epsilon) \le \epsilon \|g\|_{L^1(\Omega)},
	\end{equation*}
	where we have used $0\le \sqrt{|\bm A|^2 + \epsilon^2} - |\bm A| = \epsilon^2/(\sqrt{|\bm A|^2 + \epsilon^2} + |\bm A|) \le \epsilon$.
	This shows $\bm u_\epsilon \to \bm u$ strongly in $\bm V_\sigma$ as $\epsilon \to 0$.
\end{proof}

For the regularized problem, it is easy to recover the associated pressure.
\begin{prop} \label{prop: VEeps}
	If $\bm f \in \bm L^2(\Omega)$, then for the solution $\bm u_\epsilon \in \bm V_\sigma$ constructed in \pref{prop: VI eps,sigma}, there exists a unique $p_\epsilon \in L^2_0(\Omega)$ such that
	\begin{equation} \label{eq: VEeps}
		(\nabla \bm u_\epsilon, \nabla \bm v) + \Big( g \frac{\nabla \bm u_\epsilon}{ \sqrt{|\nabla \bm u_\epsilon|^2 + \epsilon^2} }, \nabla \bm v \Big) - (p_\epsilon, \operatorname{div} \bm v) = (\bm f, \bm v) \qquad \forall \bm v \in \bm V.
	\end{equation}
	Moreover, $p_\epsilon$ satisfies
	\begin{equation} \label{eq: peps is bounded}
		\|p_\epsilon\|_{L^2(\Omega)} \le C (\|\bm f\|_{\bm L^2(\Omega)} + \|g\|_{L^2(\Omega)}).
	\end{equation}
\end{prop}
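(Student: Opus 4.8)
The plan is to recover $p_\epsilon$ by the classical de Rham / Ne\v{c}as argument, treating it as the Lagrange multiplier associated with the divergence constraint. Define the linear functional $F : \bm V \to \mathbb R$ by
\[
  F(\bm v) = (\bm f, \bm v) - (\nabla \bm u_\epsilon, \nabla \bm v) - \Big( g \frac{\nabla \bm u_\epsilon}{\sqrt{|\nabla \bm u_\epsilon|^2 + \epsilon^2}}, \nabla \bm v \Big).
\]
Since $\big| g \nabla \bm u_\epsilon / \sqrt{|\nabla \bm u_\epsilon|^2 + \epsilon^2} \big| \le |g|$ pointwise, the last term is bounded by $\|g\|_{L^2(\Omega)} \|\nabla \bm v\|_{\bm L^2(\Omega)}$, so $F \in \bm V'$ with
\[
  \|F\|_{\bm V'} \le C\big( \|\bm f\|_{\bm L^2(\Omega)} + \|\nabla \bm u_\epsilon\|_{\bm L^2(\Omega)} + \|g\|_{L^2(\Omega)} \big) \le C\big( \|\bm f\|_{\bm L^2(\Omega)} + \|g\|_{L^2(\Omega)} \big),
\]
where the last step uses the $\bm H^1$-bound of \pref{prop: VI eps,sigma}(ii) together with $\|\bm f\|_{\bm V_\sigma'} \le C\|\bm f\|_{\bm L^2(\Omega)}$. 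Moreover, the variational equation \eref{eq: VE eps,sigma} says precisely that $F$ vanishes on $\bm V_\sigma = \{\bm v \in \bm V \mid \operatorname{div}\bm v = 0\}$.

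The one genuine ingredient is the surjectivity of $\operatorname{div} : \bm V \to L^2_0(\Omega)$ with a bounded right inverse, equivalently the inf--sup (LBB) inequality
\[
  \inf_{q \in L^2_0(\Omega)\setminus\{0\}} \ \sup_{\bm v \in \bm V\setminus\{\bm 0\}} \frac{(q, \operatorname{div}\bm v)}{\|q\|_{L^2(\Omega)}\,\|\bm v\|_{\bm H^1(\Omega)}} \ge \beta > 0.
\]
For the present space $\bm V$, in which only $\bm v\cdot\bm n = 0$ is imposed, this follows directly: given $q \in L^2_0(\Omega)$, solve the Neumann problem $\Delta\phi = q$ in $\Omega$, $\partial\phi/\partial\bm n = 0$ on $\Gamma$ (solvable since $\int_\Omega q\,d\bm x = 0$), and set $\bm v = \nabla\phi$; then $\bm v\cdot\bm n = 0$ on $\Gamma$, $\operatorname{div}\bm v = q$, and elliptic regularity on the smooth domain gives $\|\bm v\|_{\bm H^1(\Omega)} \le \|\phi\|_{H^2(\Omega)} \le C\|q\|_{L^2(\Omega)}$.

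Granting this, I would finish as follows. Since $F \in \bm V'$ annihilates the kernel $\bm V_\sigma$ of the surjective operator $\operatorname{div} : \bm V \to L^2_0(\Omega)$, the Banach closed-range theorem (de Rham's lemma) yields a unique $p_\epsilon \in L^2_0(\Omega)$ with $F(\bm v) = (p_\epsilon, \operatorname{div}\bm v)$ for all $\bm v \in \bm V$, which is exactly \eref{eq: VEeps}; uniqueness also follows by testing with a $\bm v \in \bm V$ such that $\operatorname{div}\bm v = p_\epsilon$. Finally, the inf--sup inequality applied to $q = p_\epsilon$ gives $\beta\|p_\epsilon\|_{L^2(\Omega)} \le \sup_{\bm v \in \bm V} (p_\epsilon, \operatorname{div}\bm v)/\|\bm v\|_{\bm H^1(\Omega)} = \|F\|_{\bm V'}$, and combining with the bound on $\|F\|_{\bm V'}$ above proves \eref{eq: peps is bounded}. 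The main (and only non-routine) obstacle is establishing the inf--sup property for $\bm V$; everything else is bookkeeping with the $\bm H^1$-estimate already available from \pref{prop: VI eps,sigma}.
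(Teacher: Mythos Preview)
Your proof is correct and follows essentially the same route as the paper: both recover $p_\epsilon$ via the inf--sup condition for $\operatorname{div}$ and then bound $\|p_\epsilon\|_{L^2}$ by duality, using the pointwise bound $|g\nabla\bm u_\epsilon/\sqrt{|\nabla\bm u_\epsilon|^2+\epsilon^2}|\le g$ and the $\bm H^1$-estimate from \pref{prop: VI eps,sigma}. The only cosmetic difference is that the paper invokes the classical inf--sup inequality over $\bm H^1_0(\Omega)\subset\bm V$ (citing \cite{GiRa1986}), whereas you build a right inverse on $\bm V$ directly via the Neumann problem---both verifications are fine.
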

\begin{proof}
	Unique existence of $p_\epsilon$ follows from an abstract theory for variational equations in a mixed form (see \cite[Lemma I.4.1(ii)]{GiRa1986}), if we notice that the inf-sup condition
	\begin{equation*}
		C \|q\|_{L^2(\Omega)} \le \sup_{\bm v \in \bm H^1_0(\Omega)} \frac{(q, \operatorname{div} \bm v)}{\|\bm v\|_{\bm H^1(\Omega)}} \quad \forall q \in L^2_0(\Omega)
	\end{equation*}
	holds (see e.g.  \cite[p.\ 81]{GiRa1986}).
	Since $\bm H^1_0(\Omega) \subset \bm V$, we also obtain
	\begin{align*}
		C \|p_\epsilon\|_{L^2(\Omega)}
		&\le \sup_{\bm v \in \bm V} \frac{(p_\epsilon, \operatorname{div} \bm v)}{\|\bm v\|_{\bm H^1(\Omega)}}
			= \sup_{\bm v \in \bm V} \frac{ (\nabla \bm u_\epsilon, \nabla \bm v) + \Big( g \frac{\nabla \bm u_\epsilon}{ \sqrt{|\nabla \bm u_\epsilon|^2 + \epsilon^2} }, \nabla \bm v \Big) - (\bm f, \bm v) }{\|\bm v\|_{\bm H^1(\Omega)}} \\
		&\le \|\nabla \bm u_\epsilon\|_{\bm L^2(\Omega)} + \|g\|_{L^2(\Omega)} + \|\bm f\|_{\bm L^2(\Omega)},
	\end{align*}
	which combined with \pref{prop: VI eps,sigma}(ii) proves the $L^2(\Omega)$-estimate for the pressure.
\end{proof}

With the help of the regularized problem, we are now able to establish a counterpart to the ``converse'' statement of \pref{prop: VIsigma} (cf.\ \cite[Theorem II.6.3]{Glo2008}).
\begin{prop}
	Let $\bm f \in \bm L^2(\Omega)$, $g \in L^2(\Omega), \, g \ge 0$, and $\bm u \in \bm V_\sigma$ be the solution of \eref{eq: VI}.
	Then there exist $\bm\lambda \in \bm L^\infty(\Omega) = L^\infty(\Omega)^{d \times d}$ and $p \in L^2_0(\Omega)$ such that
	\begin{align}
		&(\nabla \bm u + g \bm\lambda, \nabla \bm v) - (p, \operatorname{div} \bm v) = (\bm f, \bm v) \qquad \forall \bm v \in \bm V, \label{eq: VE} \\
		&\operatorname{Tr} \bm\lambda = 0, \quad |\bm\lambda| \le 1, \quad g\bm\lambda : \nabla \bm u = g|\nabla \bm u| \quad\text{a.e.\ in}\quad \Omega. \notag
	\end{align}
	In particular, if we set $\bm T^D := \nabla\bm u + g \bm\lambda$ then $(\bm T^D, \bm u, p)$ solves \eref{eq: strong form}--\eref{eq: slipBC}.
\end{prop}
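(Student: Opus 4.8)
The plan is to obtain $\bm\lambda$ and $p$ as weak(-$*$) limits of data extracted from the regularized problem, and to identify their pointwise properties afterwards.

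First I would put $\bm\lambda_\epsilon := \nabla\bm u_\epsilon/\sqrt{|\nabla\bm u_\epsilon|^2 + \epsilon^2}$, so that $|\bm\lambda_\epsilon|\le 1$ and $\operatorname{Tr}\bm\lambda_\epsilon = (\operatorname{div}\bm u_\epsilon)/\sqrt{|\nabla\bm u_\epsilon|^2+\epsilon^2} = 0$ a.e.\ in $\Omega$, since $\bm u_\epsilon \in \bm V_\sigma$. Thus $(\bm\lambda_\epsilon)_{\epsilon>0}$ is bounded in $L^\infty(\Omega)^{d\times d}$, and by \eref{eq: peps is bounded} the family $(p_\epsilon)_{\epsilon>0}$ is bounded in $L^2_0(\Omega)$. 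Passing to a subsequence, I would take $\bm\lambda_\epsilon \overset{*}{\rightharpoonup} \bm\lambda$ in $L^\infty(\Omega)^{d\times d}$ and $p_\epsilon \rightharpoonup p$ in $L^2_0(\Omega)$, and then let $\epsilon\to 0$ in \eref{eq: VEeps} for each fixed $\bm v\in\bm V$. The first term converges to $(\nabla\bm u,\nabla\bm v)$ because $\nabla\bm u_\epsilon\to\nabla\bm u$ strongly in $\bm L^2(\Omega)$ by \pref{prop: VI eps,sigma}(ii); the term $(g\bm\lambda_\epsilon,\nabla\bm v)=\int_\Omega\bm\lambda_\epsilon:(g\nabla\bm v)\,d\bm x$ converges to $(g\bm\lambda,\nabla\bm v)$ because $g\nabla\bm v\in L^1(\Omega)^{d\times d}$ (as $g,|\nabla\bm v|\in L^2(\Omega)$) and we may test the weak-$*$ convergence against it; and $(p_\epsilon,\operatorname{div}\bm v)\to(p,\operatorname{div}\bm v)$ by weak $L^2$-convergence. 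This gives \eref{eq: VE}.

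Next I would check the constraints on $\bm\lambda$. That $|\bm\lambda|\le 1$ and $\operatorname{Tr}\bm\lambda=0$ a.e.\ follow by testing the limit against arbitrary $L^1$ tensors, exactly as these relations hold for $\bm\lambda_\epsilon$. The complementarity relation $g\bm\lambda:\nabla\bm u=g|\nabla\bm u|$ is the step I expect to be the main obstacle, and I would establish it by an energy argument rather than pointwise passage. Since $g\ge 0$ and $|\bm\lambda|\le 1$, the Cauchy--Schwarz inequality for matrices gives $g\bm\lambda:\nabla\bm u\le g|\nabla\bm u|$ a.e., so it suffices to show the two sides have equal integral over $\Omega$. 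Taking $\bm v=\bm 0$ and $\bm v=2\bm u$ in \eref{eq: VI} yields the energy identity $\langle\bm f,\bm u\rangle=\|\nabla\bm u\|_{\bm L^2(\Omega)}^2+(g,|\nabla\bm u|)$, while taking $\bm v=\bm u\in\bm V_\sigma$ in \eref{eq: VE} (and using $\operatorname{div}\bm u=0$) gives $(\bm f,\bm u)=\|\nabla\bm u\|_{\bm L^2(\Omega)}^2+(g\bm\lambda,\nabla\bm u)$. Comparing the two, and recalling $(\bm f,\bm u)=\langle\bm f,\bm u\rangle$ for $\bm u\in\bm V_\sigma$, forces $(g\bm\lambda,\nabla\bm u)=(g,|\nabla\bm u|)$; hence the nonnegative function $g|\nabla\bm u|-g\bm\lambda:\nabla\bm u$ has vanishing integral and therefore vanishes a.e.

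Finally, for the ``in particular'' assertion I would set $\bm T^D:=\nabla\bm u+g\bm\lambda$ and note that \eref{eq: VE} is precisely the weak form of the momentum balance and slip condition in \eref{eq: strong form} and \eref{eq: slipBC}: restricting to $\bm v\in\bm H^1_0(\Omega)$ gives $-\operatorname{div}(\bm T^D-p\bm I)=\bm f$ in the distributional sense; the incompressibility $\operatorname{div}\bm u=0$ and $\bm u\cdot\bm n=0$ on $\Gamma$ are built into $\bm V_\sigma$; and the boundary integral that appears when integrating by parts in \eref{eq: VE} for general $\bm v\in\bm V$ is purely tangential (the pressure part drops since $\bm v\cdot\bm n=0$ on $\Gamma$), which encodes $(\bm T^D\bm n)_\tau=\bm 0$ on $\Gamma$. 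The constitutive law \eref{eq: TD} then follows from the $\bm\lambda$-constraints: where $\nabla\bm u\neq\bm O$ and $g>0$, equality in $\bm\lambda:\nabla\bm u\le|\bm\lambda|\,|\nabla\bm u|\le|\nabla\bm u|$ forces $\bm\lambda=\nabla\bm u/|\nabla\bm u|$, so $\bm T^D=\nabla\bm u+g\,\nabla\bm u/|\nabla\bm u|$ there (the set $\{g=0\}$ causing no issue since $g\bm\lambda=\bm O$), while where $\nabla\bm u=\bm O$ one has $\bm T^D=g\bm\lambda$ and $|\bm T^D|\le g$.
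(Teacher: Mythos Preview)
Your proof is correct and follows essentially the same approach as the paper's own argument: define $\bm\lambda_\epsilon = \nabla\bm u_\epsilon/\sqrt{|\nabla\bm u_\epsilon|^2+\epsilon^2}$, extract weak-$*$ and weak limits $\bm\lambda$ and $p$, pass to the limit in \eref{eq: VEeps}, and then derive the complementarity relation $g\bm\lambda:\nabla\bm u = g|\nabla\bm u|$ by comparing the energy identity from \eref{eq: VI} (with $\bm v=\bm 0,2\bm u$) against \eref{eq: VE} tested with $\bm v=\bm u$. Your treatment of the ``in particular'' part is somewhat more detailed than the paper's, but the logic is identical.
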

\begin{proof}
	In \eref{eq: VEeps} we set $\bm\lambda_\epsilon := \nabla \bm u_\epsilon/ \sqrt{|\nabla \bm u_\epsilon|^2 + \epsilon^2} \in \bm L^\infty(\Omega)$ to get
	\begin{align}
		&(\nabla \bm u_\epsilon + g \bm\lambda_\epsilon, \nabla \bm v) - (p_\epsilon, \operatorname{div} \bm v) = (\bm f, \bm v) \qquad \forall \bm v \in \bm V,   \label{eq: proof of thm2.2} \\
		&\operatorname{Tr} \bm\lambda_\epsilon = 0, \quad |\bm\lambda_\epsilon| \le 1 \quad\text{a.e.\ in}\quad \Omega. \notag
	\end{align}
	In view of this and \eref{eq: peps is bounded}, we can choose a sub-sequence of $\{\bm\lambda_\epsilon\}, \{p_\epsilon\}$ (denoted by the same symbols) and some $\bm\lambda \in \bm L^\infty(\Omega), p \in L^2_0(\Omega)$ such that $\operatorname{Tr} \bm\lambda = 0$, $|\bm\lambda| \le 1$ a.e.\ in $\Omega$ and
	\begin{align*}
		\bm\lambda_\epsilon \rightharpoonup \bm\lambda \quad\text{weakly-$*$ in}\quad \bm L^\infty(\Omega), \qquad p_\epsilon \rightharpoonup p \quad\text{weakly in}\quad L^2(\Omega).
	\end{align*}
	Then, letting $\epsilon \to 0$ in \eref{eq: proof of thm2.2} we obtain \eref{eq: VE}.
	Next, taking $\bm v = \bm 0, 2\bm u$ in \eref{eq: VI} we have $\|\nabla\bm u\|_{\bm L^2(\Omega)}^2 + (g, |\nabla\bm u|) = (\bm f, \bm u)$, which combined with \eref{eq: VE} for $\bm v = \bm u$ leads to $(g, |\nabla\bm u|) = (g\bm\lambda, \nabla\bm u)$.
	Since $|\bm\lambda| \le 1$, this holds if and only if $g\bm\lambda : \nabla \bm u = g |\nabla \bm u|$ a.e.\ in $\Omega$.
	Consequently, we have $g\bm\lambda = g\nabla\bm u/|\nabla\bm u|$ where $\nabla\bm u \neq \bm O$.
	Now we find from \eref{eq: VE} that \eref{eq: strong form} holds in the distributional sense (or in $\bm H^{-1}(\Omega)$) and that the second condition in \eref{eq: slipBC} holds in the sense of $\bm H^{-1/2}(\Gamma)$.
\end{proof}
\begin{rem}
	Uniqueness of $\bm\lambda$ (thus that of $p$ neither) may not be true; see \cite[p.\ 354]{GLT1981}.
\end{rem}

\section{$H^2$-regularity for the stationary problem} \label{sec: stationary}
We continue to focus on the stationary problems and to assume $\nu = 1$ as in the previous section.
The following $\bm H^2$-regularity theorem on the regularized problem is the main result of this section.
\begin{thm} \label{thm: H2 estimate for ueps}
	Let $\Omega$ be of $C^{3,1}$-class, $\bm f \in \bm L^2(\Omega)$, $g \in H^1_0(\Omega)$ be non-negative, and $\epsilon \in (0, 1]$.
	Then the solution of $\bm u_\epsilon$ of \eref{eq: VI eps} satisfies $\bm u_\epsilon \in \bm H^2(\Omega)$, $(\nabla \bm u_\epsilon \bm n)_\tau = \bm 0$ a.e.\ on $\Gamma$, and
	\begin{equation*}
		\|\bm u_\epsilon\|_{\bm H^2(\Omega)} \le C (\|\bm f\|_{\bm L^2(\Omega)} + \|g\|_{H^1(\Omega)}),
	\end{equation*}
	where the constant $C = C(d, \Omega)$ is independent of $\epsilon$.
\end{thm}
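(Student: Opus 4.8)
The goal is an a priori bound $\|\bm u_\epsilon\|_{\bm H^2(\Omega)}\le C(\|\bm f\|_{\bm L^2(\Omega)}+\|g\|_{H^1(\Omega)})$ with $C$ independent of $\epsilon$, obtained by a localization argument: a partition of unity splits $\bar\Omega$ into interior patches and boundary patches, and on a boundary patch the boundary is flattened and the tangential and normal second-order derivatives are estimated by different devices. The mechanism that keeps every estimate uniform in $\epsilon$ is that the singular part $g\bm\lambda_\epsilon:=g\nabla\bm u_\epsilon/\sqrt{|\nabla\bm u_\epsilon|^2+\epsilon^2}$ of the regularized stress $\nabla\bm u_\epsilon+g\bm\lambda_\epsilon$ is used only through the $\epsilon$-free bounds $|\bm\lambda_\epsilon|\le1$, $|g\bm\lambda_\epsilon|\le g$ and the monotonicity $(\bm\lambda_\epsilon(\bm M_1)-\bm\lambda_\epsilon(\bm M_2)):(\bm M_1-\bm M_2)\ge0$; it is never differentiated (its derivative is only $O(1/\epsilon)$), so the degeneration of the regularized diffusion coefficient plays no role in the constants.

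On an interior ball $B\Subset\Omega$ with a cut-off $\phi$ I test \eref{eq: VE eps,sigma} with $\bm v=-D^k_{-h}\!\bigl(\phi^2 D^k_h\bm u_\epsilon\bigr)+\bm v_1$ for every coordinate direction $k$, where $D^k_h$ denotes the difference quotient and $\bm v_1\in\bm H^1_0(B)$ is a Bogovskii-type corrector restoring the divergence-free constraint, with $\|\bm v_1\|_{\bm H^1}\le C\bigl\|\operatorname{div}\bigl(D^k_{-h}(\phi^2 D^k_h\bm u_\epsilon)\bigr)\bigr\|_{L^2}$; thanks to $\operatorname{div}\bm u_\epsilon=0$ that divergence is of lower order, $\bm v\in\bm V_\sigma$, and no pressure appears. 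The term $(\nabla\bm u_\epsilon,\nabla\bm v)$ yields the good quantity $\|\phi D^k_h\nabla\bm u_\epsilon\|_{\bm L^2}^2$; in $(g\bm\lambda_\epsilon(\nabla\bm u_\epsilon),\nabla\bm v)$ the part where $D^k_h$ falls on $\bm\lambda_\epsilon$ has a favourable sign by monotonicity and is discarded, the part where it falls on $g$ is controlled by $\|\partial_k g\|_{L^2}$; the remaining terms carry a factor $\nabla\phi$ (or come from $\bm v_1$) and are absorbed by a finite iteration over nested cut-offs, using $\|\bm u_\epsilon\|_{\bm H^1(\Omega)}\le C\|\bm f\|_{\bm V_\sigma'}$ from \pref{prop: VI eps,sigma}. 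Sending $h\to0$ gives $\phi\bm u_\epsilon\in\bm H^2$ with an $\epsilon$-free bound; this is essentially the classical interior regularity, the only care needed being the bookkeeping above.

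On a boundary ball I pass to boundary-normal coordinates $\bm y=(\bm y',y_d)$ — well defined since $\Gamma\in C^{3,1}$ — mapping $B\cap\Omega$ onto $\{y_d>0\}$, $B\cap\Gamma$ onto $\{y_d=0\}$, and the outer normal onto $-\partial_{y_d}$; the transformed problem retains the form $-\operatorname{div}\bigl(\mathcal A_\epsilon(\bm y,\nabla\bm u_\epsilon)\bigr)+\nabla p_\epsilon=\tilde{\bm f}$ up to lower-order terms from the change of variables, and \eref{eq2: slipBC} becomes $u_\epsilon^{(d)}=0$ and $\partial_{y_d}u_\epsilon^{(j)}+(\text{l.o.t.})=0$ on $\{y_d=0\}$ for $j<d$. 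Because a tangential difference quotient $D^k_h$ ($k\le d-1$) preserves the hyperplane $\{y_d=0\}$ and hence the constraint $\bm v\cdot\bm n=0$, the scheme of the interior case applies verbatim in the tangential directions — the curvature and lower-order terms being treated as extra data — yielding $\partial_k\partial_\ell u_{\epsilon,m}\in\bm L^2_{\mathrm{loc}}$, uniformly in $\epsilon$, for all $k\le d-1$. Here the hypothesis $g\in H^1_0(\Omega)$ is used twice: $\|\nabla g\|_{L^2}$ controls the term where $D^k_h$ hits $g$, and, $g$ vanishing on $\Gamma$, its $L^2$-norm over a thin boundary strip is $O(\text{width})$, which makes the near-boundary cross terms absorbable.

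It remains to bound the purely normal second derivatives $\partial_{y_d}^2\bm u_\epsilon$, and I expect this to be the main obstacle — indeed the heart of the theorem. One recovers $\partial_{y_d}^2u_\epsilon^{(j)}$ for the tangential components ($j<d$) and $\partial_{y_d}^2u_\epsilon^{(d)}$ for the normal one by combining the tangential estimates already obtained with the incompressibility relation $\partial_{y_d}u_\epsilon^{(d)}=-\sum_{j<d}\partial_{y_j}u_\epsilon^{(j)}+(\text{l.o.t.})$ and the momentum equation of \eref{eq: VEeps}: the coefficient multiplying the unknown $\partial_{y_d}^2u_\epsilon^{(j)}$ equals $a_\epsilon:=1+g/\sqrt{|\nabla\bm u_\epsilon|^2+\epsilon^2}\ge1$, so one may divide by it, and — crucially — the construction can be arranged so that the pressure intervenes only through $p_\epsilon$ itself, never through any derivative we cannot control, whence the mere bound $\|p_\epsilon\|_{L^2(\Omega)}\le C(\|\bm f\|_{\bm L^2(\Omega)}+\|g\|_{L^2(\Omega)})$ of \pref{prop: VEeps} suffices. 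This is exactly where the perfect slip condition is indispensable: with the no-slip condition one would instead need higher tangential pressure regularity, which is unavailable for the Bingham nonlinearity, and the scheme would collapse. Summing the localized bounds over the partition of unity gives $\bm u_\epsilon\in\bm H^2(\Omega)$ with the asserted $\epsilon$-free bound; finally, since $\bm u_\epsilon\in\bm H^2(\Omega)$ makes the first normal trace meaningful, integrating by parts in \eref{eq: VE eps,sigma} against $\bm v\in\bm V_\sigma$ and comparing with the pointwise equation forces $\bigl((\nabla\bm u_\epsilon+g\bm\lambda_\epsilon)\bm n\bigr)_\tau=\bm0$, i.e.\ (using $g|_\Gamma=0$) $(\nabla\bm u_\epsilon\,\bm n)_\tau=\bm0$, a.e.\ on $\Gamma$.
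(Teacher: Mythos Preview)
Your overall architecture---localize, flatten, tangential difference quotients, then recover normal second derivatives---matches the paper's, and your tangential step is essentially correct (the paper works in the $\bm V$-formulation \eref{eq: VEeps} with the pressure present rather than using a Bogovskii corrector, but either device works). The gap is the normal step, which you rightly flag as the heart of the theorem but then treat only schematically.

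The pointwise scheme ``divide by $a_\epsilon=1+g/\sqrt{|\nabla\bm u_\epsilon|^2+\epsilon^2}\ge1$'' does \emph{not} give an $\epsilon$-free bound. When you expand the $j$-th momentum equation ($j<d$) and isolate $\partial_{y_d}^2 u_\epsilon^{(j)}$, the right-hand side contains the \emph{tangential} pressure derivative $\partial_{y_j}p_\epsilon$, for which the only available bound is $O(1/\epsilon)$ (see Section \ref{sec: nabla' p}); and the quotient-rule cross terms from $\partial_{y_d}\bigl(1/\sqrt{|\nabla\bm u_\epsilon|^2+\epsilon^2}\bigr)$ couple all components of $\partial_{y_d}^2\bm u_\epsilon$ in a way that requires a Cauchy--Schwarz cancellation, not mere division. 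The paper does carry out exactly this pointwise computation (Section 3.3.1), but only to obtain $\partial_{y_d}^2\tilde{\bm u}_\epsilon,\;\partial_{y_d}\tilde p_\epsilon\in L^2$ \emph{qualitatively}, with constants depending on $\epsilon$.

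The uniform normal estimate (Sections 3.3.3--3.3.4) is an integrated one: multiply the strong form by $-\partial_d(\theta^2\partial_d\tilde u_i)$, integrate, and swap $\partial_l$ with $\partial_d$ by parts. The resulting boundary integrals on $\{y_d=0\}$ are where the work lies. For the Bingham term they vanish precisely because $g|_\Gamma=0$---this is the genuine role of $g\in H^1_0(\Omega)$ (Remark \ref{rem: H^1_0 is inevitable}), not the thin-strip smallness you invoke in the tangential step, where only $g\in H^1$ is used. For the pressure term one is left with $\int_{K_0}\tilde p\,\partial_d(\theta^2\partial_d\underline{\tilde u}_d^{(m)})\,d\bm y'$; the perfect-slip condition and incompressibility force $\partial_d\underline{\tilde u}_d$ and $\partial_d^2\underline{\tilde u}_d$ on $K_0$ to be expressible through \emph{tangential} derivatives of $\tilde{\bm u}$ (Lemmas \ref{lem: first-order trace}--\ref{lem: second-order trace}), so the boundary test function is bounded in $H^{1/2}(K_0)$ by quantities already controlled, and a duality estimate for $\tilde p|_{K_0}$ in $H^{-1/2}(K_0)+L^2(K_0)$ (Lemma \ref{lem: trace of p}) closes the argument using only $\|p_\epsilon\|_{L^2}$. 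Your phrase ``the construction can be arranged so that the pressure intervenes only through $p_\epsilon$ itself'' gestures toward this, but hides exactly these two nontrivial devices; as written, the proposal does not contain a uniform normal estimate.
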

\begin{rem}
	As for the pressure, although it is possible to show $p_\epsilon \in H^1(\Omega)$, we were not able to get an $H^1(\Omega)$-estimate uniform in $\epsilon$ (see Subsection \ref{sec: nabla' p}).
\end{rem}

This theorem allows us to derive an $\bm H^2$-regularity result for the original variational inequality without regularization.
\begin{cor} \label{cor: main result}
	Under the same assumptions as in \tref{thm: H2 estimate for ueps}, the solution $\bm u$ of \eref{eq: VI} satisfies $\bm u \in \bm H^2(\Omega)$, $(\nabla \bm u \, \bm n)_\tau = \bm 0$ a.e.\ on $\Gamma$, and $\|\bm u\|_{\bm H^2(\Omega)} \le C (\|\bm f\|_{\bm L^2(\Omega)} + \|g\|_{H^1(\Omega)})$.
\end{cor}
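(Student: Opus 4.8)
The plan is to pass to the limit $\epsilon \to 0$ in the uniform estimate provided by \tref{thm: H2 estimate for ueps}. That theorem tells us the family $\{\bm u_\epsilon\}_{\epsilon \in (0,1]}$ is bounded in $\bm H^2(\Omega)$ by $C(\|\bm f\|_{\bm L^2(\Omega)} + \|g\|_{H^1(\Omega)})$, uniformly in $\epsilon$. Since $\bm H^2(\Omega)$ is a Hilbert space, I can extract a sequence $\epsilon_k \to 0$ and a limit $\bm w \in \bm H^2(\Omega)$ with $\bm u_{\epsilon_k} \rightharpoonup \bm w$ weakly in $\bm H^2(\Omega)$.

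First I would identify $\bm w$. Weak $\bm H^2(\Omega)$-convergence implies weak $\bm H^1(\Omega)$-convergence, whereas \pref{prop: VI eps,sigma}(ii) gives $\bm u_\epsilon \to \bm u$ strongly in $\bm V_\sigma$, hence in $\bm H^1(\Omega)$; uniqueness of weak limits yields $\bm w = \bm u$, so $\bm u \in \bm H^2(\Omega)$. As the limit does not depend on the subsequence, in fact $\bm u_\epsilon \rightharpoonup \bm u$ weakly in $\bm H^2(\Omega)$ as $\epsilon \to 0$. The norm bound is then a consequence of weak lower semicontinuity:
\begin{equation*}
	\|\bm u\|_{\bm H^2(\Omega)} \le \liminf_{\epsilon \to 0} \|\bm u_\epsilon\|_{\bm H^2(\Omega)} \le C(\|\bm f\|_{\bm L^2(\Omega)} + \|g\|_{H^1(\Omega)}).
\end{equation*}

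For the boundary condition, I would use that $\bm u_\epsilon \rightharpoonup \bm u$ in $\bm H^2(\Omega)$ entails $\nabla \bm u_\epsilon \rightharpoonup \nabla \bm u$ in $\bm H^1(\Omega)$, hence weak convergence of traces in $\bm H^{1/2}(\Gamma)$ and, by the compact embedding $\bm H^{1/2}(\Gamma) \hookrightarrow \bm L^2(\Gamma)$, strong convergence in $\bm L^2(\Gamma)$; the tangential projection $\bm v \mapsto \bm v_\tau$ is continuous on $\bm L^2(\Gamma)$, so $(\nabla \bm u_\epsilon \bm n)_\tau \to (\nabla \bm u \, \bm n)_\tau$ in $\bm L^2(\Gamma)$. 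Since \tref{thm: H2 estimate for ueps} gives $(\nabla \bm u_\epsilon \bm n)_\tau = \bm 0$ a.e.\ on $\Gamma$ for every $\epsilon$, passing to the limit yields $(\nabla \bm u \, \bm n)_\tau = \bm 0$ a.e.\ on $\Gamma$.

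I do not anticipate any real obstacle here: the whole argument is a routine weak-compactness passage to the limit, the only minor points needing care being the identification of the weak limit (for which the strong $\bm H^1$-convergence in \pref{prop: VI eps,sigma}(ii) is exactly what is needed) and the continuity/compactness of the trace map used to transfer the boundary condition to the limit.
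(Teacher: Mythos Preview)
Your proof is correct and follows essentially the same route as the paper: extract a weakly $\bm H^2(\Omega)$-convergent subsequence from the uniform bound in \tref{thm: H2 estimate for ueps}, identify the limit with $\bm u$ via the strong $\bm V_\sigma$-convergence of \pref{prop: VI eps,sigma}(ii), and pass the boundary condition to the limit by compactness of the trace. Your treatment of the boundary condition is slightly more explicit than the paper's, but the argument is the same.
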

\begin{proof}
	There exists a sub-sequence of $\{\bm u_\epsilon\}$ (denoted by the same symbol) and some $\bm u^* \in \bm H^2(\Omega)$ such that $\bm u_\epsilon \rightharpoonup \bm u^*$ weakly in $\bm H^2(\Omega)$ as $\epsilon \to 0$.
	By the compactness, $\bm u_\epsilon$ converges to $\bm u^*$ strongly in $\bm H^1(\Omega)$ and $(\nabla \bm u^* \bm n)_\tau = \bm 0$ a.e.\ on $\Gamma$.
	Now \pref{prop: VI eps,sigma}(ii) and the uniqueness of the limit imply $\bm u = \bm u^*$ (this also implies the $\bm H^2(\Omega)$-weak convergence of the whole sequence $\{\bm u_\epsilon\}$).
\end{proof}

\tref{thm: H2 estimate for ueps} is reduced to the following key proposition, which asserts local $H^2$-regularity up to the boundary and inside the domain:
\begin{prop} \label{main prop}
	Under the same assumptions as in \tref{thm: H2 estimate for ueps}, let $(\bm u_\epsilon, p_\epsilon) \in \bm V_\sigma \times L^2_0(\Omega)$ be as in \pref{prop: VEeps}.
	
	(i) Each $\bm x_0 \in \Gamma$ admits open neighborhoods $W_{\bm x_0}$ and $U_{\bm x_0}$ such that $\overline{W_{\bm x_0}} \subset U_{\bm x_0}$ and $\nabla^2 \bm u_\epsilon \in \bm L^2(\Omega \cap W_{\bm x_0})$.
	Furthermore, there exists a constant $C = C(d, \Omega, \bm x_0, W_{\bm x_0}, U_{\bm x_0})$, independent of $\epsilon$, such that
	\begin{equation*}
		\|\nabla^2 \bm u_\epsilon\|_{\bm L^2(\Omega \cap W_{\bm x_0})} \le C(\|\bm f\|_{\bm L^2(\Omega \cap U_{\bm x_0})} + \|g\|_{H^1(\Omega \cap U_{\bm x_0})}
			+ \|\bm u_\epsilon\|_{\bm H^1(\Omega \cap U_{\bm x_0})} + \|p_\epsilon\|_{L^2(\Omega \cap U_{\bm x_0})}).
	\end{equation*}
	
	(ii) Each $\bm x_0 \in \Omega$ admits open neighborhoods $W_{\bm x_0}$ and $U_{\bm x_0}$ such that $\overline{W_{\bm x_0}} \subset U_{\bm x_0}$,  $\overline{U_{\bm x_0}} \subset \Omega$, and $\nabla^2 \bm u_\epsilon \in \bm L^2(\Omega \cap W_{\bm x_0})$.
	Furthermore, the same estimate as in (i) holds.
\end{prop}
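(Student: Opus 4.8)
The plan is to prove both parts by the method of difference quotients applied to the variational equation \eref{eq: VEeps}, using the monotonicity of the map $\bm P \mapsto \bm P/\sqrt{|\bm P|^2 + \epsilon^2}$ to absorb the singular diffusion term and using $\operatorname{div}\bm u_\epsilon = 0$ to keep the pressure at the $L^2$-level. Throughout, for $k\in\{1,\dots,d\}$ and small $h\ne 0$, write $D_k^h\psi(\bm x) := (\psi(\bm x + h\bm e_k) - \psi(\bm x))/h$, with $\bm e_1,\dots,\bm e_d$ the standard unit vectors.

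\emph{Interior case (ii).} Fix $\bm x_0\in\Omega$, pick neighborhoods with $\overline{W_{\bm x_0}}\subset U_{\bm x_0}$ and $\overline{U_{\bm x_0}}\subset\Omega$, and a cut-off $\zeta\in C_c^\infty(U_{\bm x_0})$ with $\zeta\equiv 1$ on $W_{\bm x_0}$. For each $k$ and small $|h|$, test \eref{eq: VEeps} with $\bm v = -D_k^{-h}(\zeta^2 D_k^h\bm u_\epsilon)\in\bm V$. The Dirichlet term yields $\|\zeta D_k^h\nabla\bm u_\epsilon\|_{\bm L^2}^2$ plus terms with at most one derivative of $\bm u_\epsilon$, controlled by $\|\bm u_\epsilon\|_{\bm H^1(\Omega\cap U_{\bm x_0})}$. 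For the singular term, decompose $D_k^h(g\bm\lambda_\epsilon) = g(\cdot + h\bm e_k)\,D_k^h\bm\lambda_\epsilon + (D_k^h g)\,\bm\lambda_\epsilon$ with $\bm\lambda_\epsilon = \nabla\bm u_\epsilon/\sqrt{|\nabla\bm u_\epsilon|^2+\epsilon^2}$; since $\bm\lambda_\epsilon$ is the gradient in $\bm P$ of the convex function $\sqrt{|\bm P|^2+\epsilon^2}$ evaluated at $\nabla\bm u_\epsilon$, the map $\bm P\mapsto\bm P/\sqrt{|\bm P|^2+\epsilon^2}$ is monotone, hence $D_k^h\bm\lambda_\epsilon : D_k^h\nabla\bm u_\epsilon\ge 0$ a.e., so the first piece (multiplied by $g\ge 0$) contributes a non-negative quantity that is simply discarded, while the second is bounded by $\|\nabla g\|_{L^2(\Omega\cap U_{\bm x_0})}\|\zeta D_k^h\nabla\bm u_\epsilon\|_{\bm L^2}$ and absorbed by Young's inequality. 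For the pressure term, $\operatorname{div}\bm v = -D_k^{-h}(\nabla(\zeta^2)\cdot D_k^h\bm u_\epsilon)$ because $\operatorname{div}\bm u_\epsilon=0$ and $D_k^h$ commutes with $\operatorname{div}$; hence $|(p_\epsilon,\operatorname{div}\bm v)|\le C\|p_\epsilon\|_{L^2}(\|\bm u_\epsilon\|_{\bm H^1} + \|\zeta' D_k^h\nabla\bm u_\epsilon\|_{\bm L^2})$ for a slightly larger cut-off $\zeta'$, which a finite hierarchy of nested cut-offs reduces to quantities already estimated. Letting $h\to 0$ gives $\partial_k\nabla\bm u_\epsilon\in\bm L^2(\Omega\cap W_{\bm x_0})$ for every $k$, i.e.\ $\nabla^2\bm u_\epsilon\in\bm L^2(\Omega\cap W_{\bm x_0})$, with the stated estimate. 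All constants are $\epsilon$-independent because the singular factor $g/\sqrt{|\nabla\bm u_\epsilon|^2+\epsilon^2}$ is never differentiated nor estimated pointwise: it enters only through the sign of $g$ and through $\nabla g\in\bm L^2$.

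\emph{Boundary case (i).} Fix $\bm x_0\in\Gamma$; using the $C^{3,1}$-regularity of $\Omega$, pick $\overline{W_{\bm x_0}}\subset U_{\bm x_0}$ and a $C^{3,1}$-diffeomorphism straightening $\Gamma\cap U_{\bm x_0}$ onto a piece of $\{y_d=0\}$, and transfer \eref{eq: VEeps} (via a divergence-preserving, Piola-type change of variables) to a half-ball, obtaining a problem of the same structure with $C^{2,1}$ coefficients from the Jacobian and with the boundary condition becoming $u_d=0$ and vanishing of the normal derivatives of the tangential velocity components on the flat part, modulo lower-order curvature terms. Running the difference-quotient argument of (ii) in the tangential directions $\bm e_1,\dots,\bm e_{d-1}$ (tangential difference quotients of admissible test functions are again admissible, up to such lower-order terms) yields $\epsilon$-uniform bounds on $\partial_i\partial_j\bm u_\epsilon$ for all $(i,j)\ne(d,d)$. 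The normal second derivative of the $d$-th velocity component follows from $\operatorname{div}\bm u_\epsilon=0$; for the remaining components one reflects $\bm u_\epsilon$, $p_\epsilon$, $g$, and $\bm f$ across $\{y_d=0\}$ (tangential velocity components and $p_\epsilon$, $g$, tangential components of $\bm f$ evenly, normal components oddly)---an admissible $\bm H^1$-extension since $\bm u_\epsilon\cdot\bm n=0$---so that, by virtue of the perfect-slip (natural) boundary condition $(\nabla\bm u_\epsilon\,\bm n)_\tau=\bm 0$, the reflected pair solves a problem of the type treated in (ii) on the doubled neighborhood, to which the interior estimate applies; equivalently, one may use normal difference quotients near the flat boundary, the boundary condition rendering the resulting boundary contributions harmless. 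Here $g\in H^1_0(\Omega)$, i.e.\ $g=0$ on $\Gamma$, is used both to keep the reflected yield stress in $H^1$ and to dominate the curvature error terms.

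\emph{Expected main obstacle.} The crux is the recovery of normal-direction regularity of $\bm u_\epsilon$ without higher regularity of $p_\epsilon$: in the no-slip setting one would first need higher tangential regularity of the pressure, which is out of reach here (no $\epsilon$-uniform $H^1$-bound on $p_\epsilon$ is available, and formally $\nabla p_\epsilon$ contains the non-integrable term $\operatorname{div}(g\nabla\bm u_\epsilon/|\nabla\bm u_\epsilon|)$). The perfect-slip condition is precisely what permits the reflection---or normal difference-quotient---argument, in which $p_\epsilon$ only ever appears paired with $\operatorname{div}\bm v$ and hence only at the $L^2$-level. The second, pervasive difficulty is to carry every estimate through with an $\epsilon$-independent constant, which is why the singular term must always be handled inside its monotone structure rather than by expanding $\nabla(g\bm\lambda_\epsilon)$.
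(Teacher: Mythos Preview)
Your interior argument (ii) and the tangential half of (i) are essentially what the paper does: difference quotients, the monotonicity inequality $(D_k^h\bm\lambda_\epsilon):(D_k^h\nabla\bm u_\epsilon)\ge 0$ to discard the singular term, and $\operatorname{div}\bm u_\epsilon=0$ to keep the pressure at the $L^2$-level. That part is fine.

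The gap is in the normal-direction step of (i). After any boundary-flattening map the weak form carries variable coefficients $\bm\Phi(\bm y)$ that depend on $y_d$ and have no useful parity in $y_d$; consequently the even/odd reflection of $(\bm u_\epsilon,p_\epsilon,g,\bm f)$ does \emph{not} satisfy the transformed equation on the doubled neighborhood, and your reduction to the interior case fails as stated. The phrase ``modulo lower-order curvature terms'' hides exactly this: the curvature terms couple to the normal second derivatives you are trying to estimate and are not lower order. Your alternative of ``normal difference quotients near the flat boundary'' is not available either, since $D_d^h$ of an admissible test function leaves $\bm V$. And the concluding claim that ``$p_\epsilon$ only ever appears paired with $\operatorname{div}\bm v$'' is precisely what is in question: testing with a normal-derivative-type function forces, after integration by parts, a genuine boundary integral $\int_{K_0}\tilde p\,\partial_d(\theta^2\partial_d\underline{\tilde u}_d)\,d\bm y'$ involving the pressure trace, which has to be controlled rather than avoided.

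The paper's route for the normal direction is quite different. First one shows, \emph{non-uniformly} in $\epsilon$, that $\tilde{\bm u}\in\bm H^2$ and $\tilde p\in H^1$ locally up to the boundary by algebraic manipulation of the strong form; this legitimizes traces and verifies $(\nabla\bm u_\epsilon\,\bm n)_\tau=\bm 0$ a.e.\ on $\Gamma$. One then introduces an $\bm H^3$ Stokes approximation $\bm u^{(m)}\to\bm u_\epsilon$ in $\bm H^2$ to justify multiplying the strong form by $-\partial_d(\theta^2\partial_d\tilde u_i)$ and integrating by parts. The resulting pressure boundary term reduces to $\int_{K_0}\tilde p\,\eta\,d\bm y'$ with $\eta=\partial_d(\theta^2\partial_d\underline{\tilde u}_d^{(m)})$; the perfect-slip condition combined with $\operatorname{div}\bm u^{(m)}=0$ forces $\eta$ on $K_0$ to be expressible through \emph{tangential} derivatives only (hence bounded in $H^{1/2}(K_0)$ by quantities already estimated), while a separate lemma bounds $\tilde p|_{K_0}$ in $H^{-1/2}+L^2$ by first-order data. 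The hypothesis $g|_\Gamma=0$ is used not for any reflection but to kill a specific curvature-induced boundary term coming from the singular-diffusion integral when $\partial_l$ and $\partial_d$ are interchanged; cf.\ \rref{rem: H^1_0 is inevitable}.
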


\begin{proof}[Proof of \tref{thm: H2 estimate for ueps}]
	The above proposition combined with a standard covering argument (note that $\overline\Omega$ is compact) and with the estimate $\|\bm u_\epsilon\|_{\bm H^1(\Omega)} + \|p_\epsilon\|_{L^2(\Omega)} \le C(\|\bm f\|_{\bm L^2(\Omega)} + \|g\|_{L^2(\Omega)})$ (recall \pref{prop: VI eps,sigma}(ii) and \eref{eq: peps is bounded}) proves the theorem.
\end{proof}

The rest of this section will be devoted to the proof of \pref{main prop}, especially statement (i).
Statement (ii), i.e., interior regularity may be obtained in a similar (actually easier) manner to (i); see \rref{rem: interior H2} below.

\subsection{Coordinate transformation to flatten the boundary}
The fist step to show \pref{main prop}(i) is, as usual in a regularity theory for elliptic problems, to localize the PDE using a cut-off function and to transform it into a half-space problem.
\subsubsection{Local coordinates} \label{sec: local coordinate}
Since $\Gamma = \partial\Omega$ is of $C^{3,1}$-class, each $\bm x_0 \in \Gamma$ admits the following graph representation based on a hight function $\rho \in C^{3,1}((-2r, 2r)^{d-1})$ with some $r > 0$ after suitable rotation and translation of the $\bm x$-coordinate:
\begin{itemize}
	\item $\bm x_0 = \bm 0$ and $\rho(\bm 0) = 0$
	\item Setting $U(2r, R) := \{\bm x = (\bm x', x_d) \mid \bm x' \in (-2r, 2r)^{d-1}, \, \rho(\bm x') - R < x_d < \rho(\bm x') + R \}$, one has
	\begin{equation*}
		\begin{aligned}
			\Omega \cap U(2r, R) &= \{\rho(\bm x') - R < x_d < \rho(\bm x')\}, \\
			\Omega^c \cap U(2r, R) &= \{\rho(\bm x') < x_d < \rho(\bm x') + R \}, \\
			\Gamma \cap U(2r, R) &= \{x_d = \rho(\bm x') \},
		\end{aligned}
		\qquad\text{for}\quad \bm x' \in (-2r, 2r)^{d-1}.
	\end{equation*}
\end{itemize}
\begin{rem}
	Hereafter we do not explicitly mention the use of rotation and translation because the weak form \eref{eq: VEeps} is invariant under these congruent transformation, cf.\ \cite[p.\ 1096]{BdV04}.
	The ``sizes'' $2r$ and $R$, as well as upper bounds for $\|\rho\|_{C^{3,1}((-2r, 2r)^{d-1})}$, may be taken uniformly for all $\bm x_0 \in \Gamma$.
\end{rem}

In the subsequent discussion, we sometimes require that the last component of a vector be aligned with the normal direction to $\Gamma$.
To this end we introduce the so called ``normal transformation'', cf.\ \cite[Theorem I.2.12]{Wlo1987}, \cite[p.\ 315]{AMS2023}, defined by the relation
\begin{equation*}
	\begin{pmatrix} \bm x' \\ x_d \end{pmatrix} = \begin{pmatrix} \bm y' \\ \rho(\bm y') \end{pmatrix} + y_d \begin{pmatrix} \nabla' \rho(\bm y') \\ -1 \end{pmatrix} =: \bm\psi(\bm y),
\end{equation*}
where $\nabla' = \nabla_{\bm y'}$ denotes the gradient operator w.r.t.\ $\bm y'$.
If $y_d = 0$, the Jacobian matrix $\bm\Psi := \nabla_{\bm y} \bm\psi$ is given by
\begin{equation} \label{eq: Psi and Phi}
	\bm\Psi = \left( \begin{array}{c|c} 
		\bm I & \nabla'\rho \\[1mm] \hline \\[-4mm]
		(\nabla'\rho)^\top & -1 \\
	\end{array} \right), \quad
	\bm\Psi^{-1} = \frac{1}{|\nabla'\rho|^2 + 1} \left( \begin{array}{c|c} 
		(|\nabla'\rho|^2 + 1)\bm I - (\nabla'\rho) (\nabla'\rho)^\top & \nabla'\rho \\[1mm] \hline \\[-4mm]
		(\nabla'\rho)^\top & -1 \\
	\end{array} \right)
	\quad \text{for} \quad y_d = 0.
\end{equation}
By the implicit function theorem, $\bm\psi$ is a $C^{2,1}$-diffeomorphism near $\bm y = \bm 0$, which we may suppose is defined for $\bm y \in (-r, r)^d =: K$.
Setting $\bm\varphi := \bm\psi^{-1}$ thus $\bm y = \bm\varphi(\bm x) \Leftrightarrow \bm x = \bm\psi(\bm y)$, we see that the Jacobian matrix of $\bm\varphi$ is $\bm\Psi^{-1} =: \bm\Phi$, more precisely, $\nabla_{\bm x} \bm\varphi(\bm\psi(\bm y)) = \bm\Phi(\bm y)$.
Moreover, $U_{\bm x_0} := \bm\psi(K)$ is a neighborhood of $\bm x_0$ and 
\begin{equation*}
	\bm\varphi(\Omega \cap U_{\bm x_0}) = K_+ := \{ \bm y \in K \mid y_d > 0 \}, \quad \bm\varphi(\Gamma \cap U_{\bm x_0}) = K_0 := \{\bm y \in K \mid y_d = 0\}.
\end{equation*}

Next we discuss transformation of functions and vectors.
For a scalar function $f$ defined in $U_{\bm x_0}$, its coordinate transformation $\tilde f$ in $K$ is given as $\tilde f(\bm y) = f(\bm x) = f(\bm\psi(\bm y))$, which is nothing but a pullback by $\bm\psi$.
The chain rule reads $\nabla_{\bm x} f = \bm\Phi^\top \nabla_{\bm y} f$, and we have (note that $\|\bm\Phi\|_{\bm L^\infty(K)}, \|\bm\Psi\|_{\bm L^\infty(K)} \le C$)
\begin{equation*}
	|\nabla_{\bm y} \tilde f| = |\bm\Psi^\top \, \bm\Phi^\top \nabla_{\bm y} \tilde f| \le C |\bm\Phi^\top \nabla_{\bm y} \tilde f| \quad\text{for}\quad \bm y \in K.
\end{equation*}
For a vector-valued function $\bm v$ defined in $U_{\bm x_0}$, its pullback to $K$ is again denoted by $\tilde{\bm v} = \bm v \circ \bm\psi$.
The chain rule for this case reads $\nabla_{\bm x} \bm v = (\nabla_{\bm y} \tilde{\bm v}) \bm\Phi$, where we recall that the gradient of a vector is defined as
\begin{equation*}
	(\nabla_{\bm x} \bm v)_{ij} = \frac{\partial v_i}{\partial x_j} \quad (i, j = 1, \dots, d), \quad\text{that is,}\quad \nabla_{\bm x} \bm v = (\nabla_{\bm x} \otimes \bm v)^\top.
\end{equation*}
Hereafter we do not indicate the subscripts $\bm x$ or $\bm y$ in the gradient operator explicitly when no confusion occurs.

To describe the outer unit normal $\bm n$ to $\Gamma$ in local coordinates, let us define
\begin{equation*}
	\bm N(\bm y') = \frac{1}{\sqrt{|\nabla'\rho(\bm y')|^2 + 1}} \begin{pmatrix} -\nabla' \rho(\bm y') \\ 1 \end{pmatrix} = \bm n(\bm\psi(\bm y', 0)) \quad\text{for}\quad (\bm y', 0) \in K_0.
\end{equation*}
Considering a pullback $\tilde{\bm v} = (\tilde v_i)_{1\le i\le d}$ only is not convenient in some cases because its components are not aligned with $\bm N$ in general.
Thereby we introduce a ``transformation of frame'' by
\begin{equation*}
	\underline{\tilde{\bm v}} = \mathring{\bm\Psi}^{-1} \tilde{\bm v} \Longleftrightarrow \tilde{\bm v} = \mathring{\bm\Psi} \underline{\tilde{\bm v}} \quad\text{where}\quad \mathring{\bm\Psi} := \bm\Psi|_{y_d = 0} \quad \text{(cf.\ \eref{eq: Psi and Phi})}.
\end{equation*}
On $K_0$, we simply write $\bm\Psi$ instead of $\mathring{\bm\Psi}$.
It is clear that the $\bm H^m(K)$-norms of $\tilde{\bm v}$ and $\underline{\tilde{\bm v}}$ are equivalent to each other for $m = 0, 1, \dots$ (up to multiplication by a constant depending only on $\Omega$).
Other properties of this new frame are collected as follows.

\begin{lem} \label{lem: properties of new frame}
	With the notation above we have:
	
	(i) $\bm\Phi \bm N = (|\nabla'\rho|^2 + 1)^{-1/2} \underline{\bm n}$ and $\bm\Psi \bm N = (|\nabla'\rho|^2 + 1)^{1/2} \underline{\bm n}$ on $K_0$, where $\underline{\bm n} := (0, \dots, 0, -1)^\top$ denotes the outer unit normal to $K_0$.
	
	(ii) $\bm v \cdot \bm n = 0$ on $\Gamma \cap U_{\bm x_0} \Longleftrightarrow \underline{\tilde v}_d = 0$ on $K_0$.
	
	(iii) $(\nabla \bm v \, \bm n)_\tau = \bm 0$ on $\Gamma \cap U_{\bm x_0} \Longleftrightarrow \partial_d \underline{\tilde{\bm v}}' = \bm 0$ on $K_0$, where $\underline{\tilde{\bm v}}' := (\underline{\tilde v}_1, \dots, \underline{\tilde v}_{d-1})^\top$.

	(iv) If $\underline{\tilde v}_d = 0$ on $K_0$, then $\underline{\tilde{\bm v}}' = \tilde{\bm v}' := (\tilde v_1, \dots, \tilde v_{d-1})^\top$ on $K_0$.
\end{lem}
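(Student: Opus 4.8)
The plan is to deduce all four statements by elementary block-matrix algebra from the explicit expressions for $\mathring{\bm\Psi}$ and $\mathring{\bm\Phi} := \mathring{\bm\Psi}^{-1}$ recorded in \eref{eq: Psi and Phi}; no analytic input is needed, since each assertion is a pointwise identity on $K_0$. Two structural facts will be used repeatedly: $\mathring{\bm\Psi}$ (hence $\mathring{\bm\Phi}$) is symmetric, and both depend on $\bm y'$ only, so that $\partial_d$ commutes with them.

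I would first establish (i) by direct multiplication. With $\bm N = (|\nabla'\rho|^2 + 1)^{-1/2}(-\nabla'\rho,\, 1)^\top$, the block form of $\mathring{\bm\Psi}$ gives top block $-\nabla'\rho + \nabla'\rho = \bm 0$ and last entry $-|\nabla'\rho|^2 - 1$, hence $\mathring{\bm\Psi}\bm N = (|\nabla'\rho|^2 + 1)^{1/2}\underline{\bm n}$. The companion identity $\mathring{\bm\Phi}\bm N = (|\nabla'\rho|^2 + 1)^{-1/2}\underline{\bm n}$ follows either from the analogous direct computation with the explicit $\mathring{\bm\Phi}$, or by noting that the last column of $\mathring{\bm\Psi}$ is $(\nabla'\rho,\, -1)^\top = -(|\nabla'\rho|^2 + 1)^{1/2}\bm N$, i.e.\ $\mathring{\bm\Psi}\underline{\bm n} = (|\nabla'\rho|^2 + 1)^{1/2}\bm N$, and inverting.

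Statements (ii)--(iv) then reduce to (i). For (ii), $\bm v\cdot\bm n$ pulls back on $K_0$ to $\tilde{\bm v}\cdot\bm N = (\mathring{\bm\Psi}\underline{\tilde{\bm v}})\cdot\bm N = \underline{\tilde{\bm v}}\cdot(\mathring{\bm\Psi}\bm N) = -(|\nabla'\rho|^2 + 1)^{1/2}\,\underline{\tilde v}_d$ by symmetry of $\mathring{\bm\Psi}$ and (i), whence the equivalence. For (iii), the chain rule $\nabla_{\bm x}\bm v = (\nabla_{\bm y}\tilde{\bm v})\bm\Phi$ together with $\bm n = \bm N$ on $\Gamma$ and (i) shows that $(\nabla\bm v\,\bm n)$ pulls back on $K_0$ to $(|\nabla'\rho|^2 + 1)^{-1/2}(\nabla_{\bm y}\tilde{\bm v})\underline{\bm n} = -(|\nabla'\rho|^2 + 1)^{-1/2}\,\partial_d\tilde{\bm v}$; since $\mathring{\bm\Psi}$ is independent of $y_d$ one has $\partial_d\tilde{\bm v} = \mathring{\bm\Psi}\,\partial_d\underline{\tilde{\bm v}}$ on $K_0$, and $\mathring{\bm\Phi}\bm N$ is parallel to $\underline{\bm n}$ by (i); hence $(\nabla\bm v\,\bm n)$ lies in $\mathrm{span}\,\bm n$ (i.e.\ its tangential part vanishes) if and only if $\partial_d\underline{\tilde{\bm v}}$ lies in $\mathrm{span}\,\underline{\bm n}$, i.e.\ if and only if $\partial_d\underline{\tilde{\bm v}}' = \bm 0$. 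Finally (iv) is immediate from the block form: if $\underline{\tilde v}_d = 0$ then $\tilde{\bm v} = \mathring{\bm\Psi}(\underline{\tilde{\bm v}}',\, 0)^\top$ has top block equal to $\underline{\tilde{\bm v}}'$.

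None of this presents a genuine obstacle; the two points requiring care are (a) not confusing the frame matrix $\mathring{\bm\Psi}(\bm y')$, constant in $y_d$, with the full Jacobian $\bm\Psi(\bm y)$ --- this is exactly what lets $\partial_d$ pass through $\mathring{\bm\Psi}$ in (iii) --- and (b) keeping the two ``tangential part'' operations (with respect to $\bm n$ on $\Gamma$ and with respect to $\underline{\bm n}$ on $K_0$) distinct, statement (i) being precisely the assertion that they are intertwined by $\mathring{\bm\Psi}$.
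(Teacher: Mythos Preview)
Your proposal is correct and follows essentially the same approach as the paper: all four items are pointwise identities on $K_0$ derived from the block form \eref{eq: Psi and Phi}, with (ii)--(iv) reduced to (i) via the symmetry of $\mathring{\bm\Psi}$ and its independence of $y_d$. The one cosmetic difference is in (iii): the paper computes the tangential projection $(\bm I - \bm N\bm N^\top)\bm\Psi\,\partial_d\underline{\tilde{\bm v}}$ explicitly as a vector, whereas you argue more conceptually that $\mathring{\bm\Psi}$ bijects $\mathrm{span}\,\underline{\bm n}$ onto $\mathrm{span}\,\bm N$ (by (i)), so ``tangential part vanishes'' is preserved under the frame change---this is a slight streamlining but not a different route.
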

\begin{proof}
	(i)(iv) These follow from a direct computation using \eref{eq: Psi and Phi}.
	
	(ii) By (i) and symmetry of the matrix $\bm\Psi$, the left-hand side in the $\bm y$-coordinate reads
	\begin{equation*}
		\bm N^\top \tilde{\bm v} = 0 \Longleftrightarrow \bm N^\top \bm\Psi \underline{\tilde{\bm v}} = 0 \Longleftrightarrow -(|\nabla'\rho|^2 + 1)^{1/2} \underline{\tilde v}_d = 0.
	\end{equation*}
	
	(iii) By a direct computation in the $\bm y$-coordinate and transformed frame (note that $\partial_d \mathring{\bm\Psi} = \bm O$),
	\begin{align*}
		(\nabla \bm v \, \bm n)_\tau &= (\bm I - \bm N \bm N^\top) \nabla (\mathring{\bm\Psi} \underline{\tilde{\bm v}}) \bm\Phi \bm N
				= (|\nabla'\rho|^2 + 1)^{-1/2} (\bm I - \bm N \bm N^\top) \nabla (\mathring{\bm\Psi} \underline{\tilde{\bm v}}) \underline{\bm n} \\
			&= -(|\nabla'\rho|^2 + 1)^{-1/2} (\bm I - \bm N \bm N^\top) \bm\Psi \partial_d \underline{\tilde{\bm v}}
				= - \frac{1}{\sqrt{|\nabla'\rho|^2 + 1}} \left[ \bm\Psi - \begin{pmatrix} -\nabla'\rho \\ 1 \end{pmatrix} \begin{pmatrix} 0 & -1 \end{pmatrix} \right] \partial_d \underline{\tilde{\bm v}} \\
			&=- \frac{1}{\sqrt{|\nabla'\rho|^2 + 1}} \begin{pmatrix} \partial_d \underline{\tilde{\bm v}}' \\ (\nabla'\rho)^\top \partial_d \underline{\tilde{\bm v}}' \end{pmatrix},
	\end{align*}
	which yields the desired equality.
\end{proof}

\subsubsection{Reduction to a half-space problem}
For simplicity in the notation, in the subsequent sections, we agree to omit the subscript $\epsilon$ from the solution $(\bm u_\epsilon, p_\epsilon)$ of \eref{eq: VEeps}.
Thus $(\bm u, p) \in \bm V \times L^2_0(\Omega)$ satisfies $\partial_i u_i = 0$ and
\begin{equation*}
	\int_\Omega \partial_j u_i \, \partial_j v_i \, d\bm x + \int_\Omega g \frac{\partial_j u_i \, \partial_j v_i}{\sqrt{|\nabla \bm u|^2 + \epsilon^2}} \, d\bm x
	 	- \int_\Omega p \, \partial_i v_i \, d\bm x = \int_\Omega f_i v_i \, d\bm x \qquad \forall \bm v \in \bm V,
\end{equation*}
where we exploit Einstein's summation convention, omitting the symbols $\sum_{i,j=1}^d$ and $\sum_{i=1}^d$.

For fixed $\bm x_0 \in \Gamma$, we consider test functions $\bm v \in \bm V$ with $\operatorname{supp} \bm v \subset U_{\bm x_0}$ and transform the above weak formulation using  $\bm y = \bm\varphi(\bm x)$ together with 
$\partial/\partial x_j = \Phi_{kj}\partial/\partial y_k$ (the summation convention is applied to $k$ as well) and denoting the Jacobian determinant by $J := \operatorname{det} \bm\Psi$ to obtain
\begin{align*}
	&\int_{K_+} \partial_k \tilde u_i \Phi_{kj} \, \partial_l \tilde v_i \Phi_{lj} \, |J| \, d\bm y
	+ \int_{K_+} \tilde g \, \frac{ \partial_k \tilde u_i \Phi_{kj} \, \partial_l \tilde v_i \Phi_{lj} }{ ( |(\nabla\tilde{\bm u})\bm\Phi|^2 + \epsilon^2 )^{1/2} } \, |J| \, d\bm y
	- \int_{K_+} \tilde p \, \partial_k \tilde v_i \Phi_{ki} \, |J| \, d\bm y \\
	&= \int_{K_+} \tilde f_i \tilde v_i \, |J| \, d\bm y
	\qquad \forall \tilde{\bm v} \in \bm H^1(K_+), \; \tilde{\bm v} \cdot \bm N = 0 \text{ on } K_0, \; \operatorname{supp} \tilde{\bm v} \subset K, \\
	&\partial_k \tilde u_i \Phi_{ki} = 0 \quad\text{in } K_+.
\end{align*}
Regarding $\tilde{\bm v} |J|$ as a new test function, we see that
\begin{equation} \label{eq: weak form in y}
\begin{aligned}
	&\int_{K_+} \partial_k \tilde u_i \Phi_{kj} \, \partial_l \tilde v_i \Phi_{lj} \, d\bm y
	+ \int_{K_+} \tilde g \, \frac{ \partial_k \tilde u_i \Phi_{kj} \, \partial_l \tilde v_i \Phi_{lj} }{ ( |(\nabla\tilde{\bm u})\bm\Phi|^2 + \epsilon^2 )^{1/2} } \, d\bm y
	- \int_{K_+} \tilde p \, \partial_k \tilde v_i \Phi_{ki} \, d\bm y \\
	&= \int_{K_+} \tilde F_i \tilde v_i \, d\bm y \qquad
	\forall \tilde{\bm v} \in \bm H^1(K_+), \; \tilde{\bm v} \cdot \bm N = 0 \text{ on } K_0, \; \operatorname{supp} \tilde{\bm v} \subset K.
\end{aligned}
\end{equation}
Here,
\begin{equation*}
	\tilde F_i := \tilde f_i - \partial_k \tilde u_i \Phi_{kj} \, \Phi_{lj} \frac{\partial_l |J|}{|J|}
		- \tilde g \, \frac{ \partial_k \tilde u_i \Phi_{kj} \, \Phi_{lj} \frac{\partial_l |J|}{|J|} }{ ( |(\nabla\tilde{\bm u}) \bm\Phi|^2 + \epsilon^2 )^{1/2} }
		+ \tilde p \, \Phi_{ki} \frac{\partial_k |J|}{|J|} \quad (i = 1, \dots, d)
\end{equation*}
satisfies, because $ \partial_k \tilde u_i \Phi_{kj} / ( |(\nabla\tilde{\bm u}) \bm\Phi|^2 + \epsilon^2 )^{1/2}) \le 1$,
\begin{equation} \label{eq: tilde F}
	\|\tilde{\bm F}\|_{\bm L^2(K_+)} \le C (\|\tilde{\bm f}\|_{\bm L^2(K_+)} + \|\tilde g\|_{L^2(K_+)} + \|\tilde{\bm u}\|_{\bm H^1(K_+)} + \|\tilde p\|_{L^2(K_+)}  ).
\end{equation}

We further transform the frame by $\tilde u_i = \mathring\Psi_{ia} \underline{\tilde u}_a$ and $\tilde v_i = \mathring\Psi_{ib} \underline{\tilde v}_b$ to arrive at
\begin{equation} \label{eq2: weak form in y}
\begin{aligned}
	&\int_{K_+} \partial_k (\mathring\Psi_{ia} \underline{\tilde u}_a) \Phi_{kj} \, \partial_l (\mathring\Psi_{ib} \underline{\tilde v}_b) \Phi_{lj} \, d\bm y
	+ \int_{K_+} \tilde g \, \frac{ \partial_k (\mathring\Psi_{ia} \underline{\tilde u}_a) \Phi_{kj} \, \partial_l (\mathring\Psi_{ib} \underline{\tilde v}_b) \Phi_{lj} }{ ( |\nabla (\mathring{\bm\Psi} \underline{\tilde{\bm u}}) \bm\Phi|^2 + \epsilon^2 )^{1/2} } \, d\bm y
	- \int_{K_+} \tilde p \, \partial_k (\mathring\Psi_{ib} \underline{\tilde v}_b) \Phi_{ki} \, d\bm y \\
	&= \int_{K_+} \tilde F_i \mathring\Psi_{ib} \underline{\tilde v}_b \, d\bm y \qquad
	\forall \tilde{\bm v} \in \bm H^1(K_+), \; \underline{\tilde v}_d = 0 \text{ on } K_0, \; \operatorname{supp} \tilde{\bm v} \subset K,
\end{aligned}
\end{equation}
where \lref{lem: properties of new frame}(ii) has been used.

\subsection{Regularity in the tangential direction}
The next step to show \pref{main prop}(i) is to establish regularity w.r.t.\ $\bm y'$ for the solution $(\tilde{\bm u}, \tilde p)$ of \eref{eq: weak form in y}.
If we set $W := (-r/2, r/2)^d$, then $W_{\bm x_0} := \bm\psi(W)$ is an open neighborhood of $\bm x_0$ and $\overline{W_{\bm x_0}} \subset U_{\bm x_0}$.
Hence there exists a cut-off function $\theta \in C_0^\infty(K)$ such that $\theta \equiv 1$ in $\overline{W}$ and $0\le \theta \le1$ in $\mathbb R^d$.
Tangential regularity of $\tilde{\bm u}$ and that of $\tilde p$ are addressed below.
\subsubsection{Estimates for $\nabla\nabla' \tilde{\bm u}$ uniform in $\epsilon$}
\begin{lem} \label{lem: tangential direction}
	We have $\nabla \nabla' \tilde{\bm u} \in \bm L^2(K_+ \cap W)$ and
	\begin{equation*}
		\|\nabla \nabla' \tilde{\bm u}\|_{\bm L^2(K_+ \cap W)} \le C(\|\bm f\|_{\bm L^2(\Omega \cap U_{\bm x_0})} + \|g\|_{H^1(\Omega \cap U_{\bm x_0})}
			+ \|\bm u\|_{\bm H^1(\Omega \cap U_{\bm x_0})} + \|p\|_{L^2(\Omega \cap U_{\bm x_0})}).
	\end{equation*}
\end{lem}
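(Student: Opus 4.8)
The plan is to use the classical difference-quotient technique in the tangential directions, applied to the regularized weak form \eref{eq2: weak form in y}. For a unit vector $\bm e_s$ with $s \in \{1, \dots, d-1\}$ and small $h \neq 0$, let $D_s^h w(\bm y) = (w(\bm y + h\bm e_s) - w(\bm y))/h$ denote the tangential difference quotient; crucially, since $\bm e_s$ is tangential to $K_0$, the shift $\bm y \mapsto \bm y + h\bm e_s$ preserves $K_+$ near $\operatorname{supp}\theta$, so $D_s^h$ maps admissible test functions to admissible ones (the constraint $\underline{\tilde v}_d = 0$ on $K_0$ is stable under tangential shifts). First I would insert the test function $\tilde{\bm v} = -D_s^{-h}(\theta^2 D_s^h \underline{\tilde{\bm u}})$ — written in the original frame as $\tilde v_i = \mathring\Psi_{ib}\bigl(-D_s^{-h}(\theta^2 D_s^h \underline{\tilde u}_b)\bigr)$ — into \eref{eq2: weak form in y}, and use the discrete integration-by-parts identity $\int_{K_+} \phi \, D_s^{-h}\chi = -\int_{K_+}(D_s^h\phi)\,\chi$ valid when one factor is supported away from $\partial K \setminus K_0$.

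The key point is the treatment of the singular term. After differencing, the Bingham contribution produces, modulo lower-order commutators coming from $D_s^h$ hitting the coefficients $\Phi$, $\mathring\Psi$, $\tilde g$, and the nonlinear denominator, a principal part of the form
\begin{equation*}
	\int_{K_+} \tilde g \, \frac{D_s^h(\partial_k\tilde u_i \Phi_{kj})\,D_s^h(\partial_l\tilde u_i\Phi_{lj})}{(|(\nabla\tilde{\bm u})\bm\Phi|^2 + \epsilon^2)^{1/2}}\,\theta^2\,d\bm y - \int_{K_+} \tilde g\,\frac{(\text{difference of }\nabla\tilde{\bm u}\bm\Phi)\otimes(\text{difference of }\nabla\tilde{\bm u}\bm\Phi):(\cdots)}{(\cdots)^{3/2}}\,\theta^2\,d\bm y,
\end{equation*}
where the second integral arises from differencing the denominator and, by the elementary monotonicity inequality $(|\bm A|^2+\epsilon^2)^{-1/2}\bm A$ being a monotone map of $\bm A$ (equivalently, its Jacobian is positive semidefinite with the rank-one correction $-\bm A\otimes\bm A/(|\bm A|^2+\epsilon^2)^{3/2}$ having spectral norm at most that of the identity part), the sum of these two integrals is $\ge 0$. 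This is precisely the sign that lets us absorb the singular term: since $\tilde g \ge 0$, it contributes non-negatively to the left-hand side and can simply be discarded. Thus the coercive Laplacian-type term $\int_{K_+}|D_s^h(\nabla\tilde{\bm u}\bm\Phi)|^2\theta^2$ must dominate everything else.

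The remaining terms are handled in the standard way: the pressure term $\int_{K_+}\tilde p\,\partial_k\tilde v_i\Phi_{ki}$ and the force term $\int \tilde F_i\tilde v_i$ are bounded via Cauchy--Schwarz and Young's inequality, using that $\tilde{\bm v}$ and $D_s^h\tilde{\bm v}$ have $\bm L^2(K_+)$-norms controlled by $\|\theta D_s^h\nabla\tilde{\bm u}\|_{\bm L^2} + \|\tilde{\bm u}\|_{\bm H^1}$ (here one uses the fundamental property $\|D_s^h w\|_{L^2}\le\|\partial_s w\|_{L^2}$ for the terms where $D_s^h$ falls on $\tilde{\bm u}$ only to first order, and that $D_s^h$ of a bounded $C^{2,1}$ coefficient is uniformly bounded); for the pressure one also needs the divergence constraint $\partial_k\tilde u_i\Phi_{ki}=0$ to see that the worst part of the pressure term vanishes or reduces to commutators — this uses that $\partial_k(\theta^2 D_s^h(\cdots))\Phi_{ki}$ differs from $\theta^2 D_s^h(\partial_k(\cdots)\Phi_{ki})$ by terms with at most one derivative on $\tilde{\bm u}$. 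After absorbing $\int_{K_+}|D_s^h\nabla\tilde{\bm u}|^2\theta^2$ on the left via Young's inequality with a small parameter, we obtain a bound on $\|\theta D_s^h\nabla\tilde{\bm u}\|_{\bm L^2(K_+)}$ uniform in $h$ and in $\epsilon$; letting $h\to 0$ gives $\partial_s\nabla\tilde{\bm u}\in\bm L^2(K_+\cap W)$ for $s=1,\dots,d-1$, which is exactly $\nabla\nabla'\tilde{\bm u}\in\bm L^2(K_+\cap W)$ with the stated estimate after transferring norms back to $\Omega\cap U_{\bm x_0}$ via the equivalence of $\bm H^m$-norms under the bi-Lipschitz change of variables.

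I expect the main obstacle to be bookkeeping: carefully tracking the commutator terms generated when $D_s^h$ lands on the variable coefficients $\Phi_{kj}$, $\mathring\Psi_{ia}$, and especially on $\tilde g$ and on the nonlinear denominator, and verifying that each such commutator contains at most \emph{one} derivative of $\tilde{\bm u}$ (so it is controlled by $\|\tilde{\bm u}\|_{\bm H^1}$ plus a small multiple of the top-order term) — the $g\in H^1_0$ hypothesis enters here, since $\nabla\tilde g\in\bm L^2$ is needed to estimate $D_s^h\tilde g \approx \partial_s\tilde g$ in $\bm L^2$, and near $K_0$ one does not even need a boundary term because the relevant test function combination is tangential. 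The sign of the singular contribution, by contrast, is the conceptually clean step that makes the whole argument work in the tangential direction without any pressure regularity beyond $L^2$.
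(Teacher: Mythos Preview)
Your proposal is correct and follows essentially the same route as the paper: insert the tangential difference-quotient test function $\underline{\tilde{\bm v}} = D_{-h}(\theta^2 D_h\underline{\tilde{\bm u}})$ into \eref{eq2: weak form in y}, split each resulting term into a principal part and lower-order commutators, exploit the divergence-free condition to reduce the pressure contribution to lower order, and use the positivity of the differenced singular term (the paper writes this out via the matrix Cauchy--Schwarz inequality $|\bm P|^2|\bm Q||\bm R|\ge(\bm P:\bm Q)(\bm R:\bm P)$, which is equivalent to your monotonicity/positive-semidefinite-Jacobian formulation) to discard it with a favorable sign. One minor remark: only $g\in H^1$ is actually used at this tangential step---the boundary vanishing $g|_\Gamma=0$ enters later, in the normal-direction estimate.
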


The proof relies on the method of difference quotient, notation of which is introduced as follows.
The standard basis of $\mathbb R^d$ is denoted by $\bm e_1, \dots, \bm e_d$.
For $\alpha = 1, \dots, d-1$ and $0 < |h| < (1/2) \operatorname{dist}(\operatorname{supp}\theta, \partial K)$, the shift and difference quotient operators in the direction $\bm e_\alpha$ are defined by
\begin{equation*}
	(s_h^\alpha f)(\bm y) = f(\bm y + h\bm e_\alpha), \qquad D_h^\alpha f = \frac{s_h^\alpha f - f}{h}.
\end{equation*}
Since it is clear that one direction $\alpha$ is fixed, we omit the superscript $\alpha$ below.
The following formulas are well known (see e.g.  \cite[Chapter 9]{Bre2010}):
\begin{align*}
	D_h(fg) &= f (D_hg) + (D_h f) (s_hg), \\
	\int_{K_+} f \, (D_{-h} g) \, d\bm y &= \int_{K_+} (D_h f) \, g \, d\bm y \quad\text{if } f, g \in L^2(K_+) \text{ and } \operatorname{supp} f \cup \operatorname{supp} g \subset \operatorname{supp}\theta \\
	\|D_h f\|_{L^p(K_+)} &\le \|\nabla f\|_{L^p(K_+)} \quad (1\le p\le \infty) \quad\text{if } f \in W^{1, p}(K_+), \; \operatorname{supp} f \subset \operatorname{supp}\theta.
\end{align*}
\begin{proof}[Proof of \lref{lem: tangential direction}]
	Take $\underline{\tilde{\bm v}}= D_{-h}(\theta^2 D_h \underline{\tilde{\bm u}})$ in \eref{eq2: weak form in y} (note that this is a legitimate test function) to get
	\begin{align*}
		&\int_{K_+} \partial_k (\mathring\Psi_{ia} \underline{\tilde u}_a) \Phi_{kj} \, \partial_l \big( \mathring\Psi_{ib} D_{-h}(\theta^2 D_h \underline{\tilde u}_b) \big) \Phi_{lj} \, d\bm y
		+ \int_{K_+} \tilde g \, \frac{ \partial_k (\mathring\Psi_{ia} \underline{\tilde u}_a) \Phi_{kj} \, \partial_l \big( \mathring\Psi_{ib} D_{-h}(\theta^2 D_h \underline{\tilde u}_b) \big) \Phi_{lj} }{ ( |\nabla (\mathring{\bm\Psi} \underline{\tilde{\bm u}}) \bm\Phi|^2 + \epsilon^2 )^{1/2} } \, d\bm y \\
		&\hspace{1cm} - \int_{K_+} \tilde p \, \partial_k \big( \mathring\Psi_{ib} D_{-h}(\theta^2 D_h \underline{\tilde u}_b) \big) \Phi_{ki} \, d\bm y
			= \int_{K_+} \tilde F_i \mathring\Psi_{ib} D_{-h}(\theta^2 D_h \underline{\tilde u}_b) \, d\bm y,
	\end{align*}
	which is rephrased as
	\begin{equation*}
		J_1 + J_2 + J_3 = J_4.
	\end{equation*}
	
	We divide each of $J_1$ to $J_4$ into ``principal'' and ``lower-order'' parts in terms of the number of differentiation or difference quotient applied to $\tilde{\bm u}$.
	For example, for $J_1$ define $M_1$ by the following equality:
	\begin{align*}
		J_1 &= \int_{K_+} \theta \partial_k D_h (\mathring\Psi_{ia} \underline{\tilde u}_a) \Phi_{kj} \cdot \theta \partial_l D_h(\mathring\Psi_{ib} \underline{\tilde u}_b) \Phi_{lj} \, d\bm y + M_1.
	\end{align*}
	Then we find that each term of $M_1$ involves two derivatives and two difference quotients and that at least one of them is applied to $\mathring{\bm\Psi}, \bm\Phi, \theta$---namely quantities other than $\tilde{\bm u}$.
	This observation implies
	\begin{equation} \label{eq: estimate of M1}
		|M_1| \le C \|\tilde{\bm u}\|_{\bm H^1(K_+)} (\|\tilde{\bm u}\|_{\bm H^1(K_+)} + \|\theta \, \nabla D_h \tilde{\bm u}\|_{\bm L^2(K_+)}),
	\end{equation}
	where we go back from the transformed frame $\underline{\tilde{\bm u}}$ to the original one $\tilde{\bm u}$ using an equivalence relation $|\nabla \underline{\tilde{\bm u}}| \le C(|\nabla \tilde{\bm u}| + |\tilde{\bm u}|)$ a.e.\ on $K_+$, etc.
	Therefore,
	\begin{align}
		J_1 &= \int_{K_+} \theta^2 |(\nabla D_h \tilde{\bm u}) \bm\Phi|^2 \, d\bm y + M_1 \ge C \|\theta \, \nabla D_h \tilde{\bm u}\|_{\bm L^2(K_+)}^2 + M_1, \label{eq1: proof of tangential direction}
	\end{align}
	with the lower-order terms $M_1$ satisfying \eref{eq: estimate of M1}.
	In this way, we agree to omit the detailed expressions of the lower-order terms, only displaying their bounds based on inequalities.
	Then, for $J_2, J_3, J_4$ we have
	{\allowdisplaybreaks
	\begin{align}
		J_2 &= \int_{K_+} \theta^2 \tilde g \, D_h \bigg[ \frac{\partial_k \tilde u_i}{( |(\nabla\tilde{\bm u})\bm\Phi|^2 + \epsilon^2 )^{1/2}} \bigg] \Phi_{kj} \cdot
			\partial_l (D_h \tilde u_i) \Phi_{lj} \, d\bm y + M_2 =: J_{21} + M_2, \notag \\
			&\hspace{3cm} |M_2| \le C\|\tilde g\|_{H^1(K_+)} (\|\tilde{\bm u}\|_{\bm H^1(K_+)} + \|\theta \, \nabla D_h \tilde{\bm u}\|_{\bm L^2(K_+)}), \notag \\[2mm]
		J_3 &= - \int_{K_+} \theta^2 \tilde p \, D_{-h} D_h (\underbrace{ \partial_k \tilde u_i \Phi_{ki} }_{=0}) \, d\bm y + M_3 = M_3, \notag \\
			&\hspace{3cm} |M_3| \le C\|\tilde p\|_{L^2(K_+)} (\|\tilde{\bm u}\|_{\bm H^1(K_+)} + \|\theta \, \nabla D_h \tilde{\bm u}\|_{\bm L^2(K_+)}), \notag \\[2mm]
		J_4 &= \int_{K_+} \theta^2 \tilde F_i (D_{-h} D_h \tilde u_i) \, d\bm y + M_4 \le \|\tilde{\bm F}\|_{\bm L^2(K_+)} \|\theta \, \nabla D_h \tilde{\bm u}\|_{\bm L^2(K_+)} + M_4, \notag \\
			&\hspace{3cm} |M_4| \le \|\tilde{\bm F}\|_{\bm L^2(K_+)} \|\tilde{\bm u}\|_{\bm H^1(K_+)}, \notag
	\end{align}}
	For $J_{21}$, a direct computation using 
$1/\sqrt a - 1/\sqrt b = (b - a)/(\sqrt a \sqrt b (\sqrt a + \sqrt b))$ shows
	\begin{align*}
		&D_h \bigg[ \frac{\partial_k \tilde u_i}{( |(\nabla\tilde{\bm u})\bm\Phi|^2 + \epsilon^2 )^{1/2}} \bigg] \Phi_{kj}
		= \frac{\partial_k (D_h\tilde u_i) \Phi_{kj}}{s_h(|(\nabla\tilde{\bm u})\bm\Phi|^2 + \epsilon^2)^{1/2}} \\
		&\quad - \frac{D_h((\nabla\tilde{\bm u})\bm\Phi)}{s_h(|(\nabla\tilde{\bm u})\bm\Phi|^2 + \epsilon^2)^{1/2}} :
		\frac{s_h((\nabla\tilde{\bm u})\bm\Phi) + (\nabla\tilde{\bm u})\bm\Phi}{s_h(|(\nabla\tilde{\bm u})\bm\Phi|^2 + \epsilon^2)^{1/2} + (|(\nabla\tilde{\bm u})\bm\Phi|^2 + \epsilon^2)^{1/2}} \,
		\frac{\partial_k \tilde u_i \Phi_{kj}}{(|(\nabla\tilde{\bm u})\bm\Phi|^2 + \epsilon^2)^{1/2}}.
	\end{align*}
	We multiply this by $\theta^2 \tilde g \cdot \partial_l (D_h \tilde u_i) \Phi_{lj}$ and add the resulting equations for $i, j = 1, \dots, d$.
	Then, as a result of
	\begin{equation*}
		\frac{D_h((\nabla\tilde{\bm u})\bm\Phi)}{s_h(|(\nabla\tilde{\bm u}) \bm\Phi|^2 + \epsilon^2)^{1/2}}
		= \frac{\nabla (D_h\tilde{\bm u}) \bm\Phi}{s_h(|(\nabla\tilde{\bm u})\bm\Phi|^2 + \epsilon^2)^{1/2}}
		+ \underbrace{s_h \bigg[ \frac{\nabla\tilde{\bm u}}{(|(\nabla\tilde{\bm u})\bm\Phi|^2 + \epsilon^2)^{1/2}} \bigg] }_{ |\cdot| \le C} D_h\bm\Phi
	\end{equation*}
	and the Cauchy--Schwarz inequality $|\bm P|^2 |\bm Q| |\bm R| \ge (\bm P : \bm Q)(\bm R : \bm P)$, we deduce that
	\begin{align*}
		J_{21} &= \int_{K+} \theta^2 \tilde g \frac{
				|\bm B|^2 \cdot (s_h + \operatorname{id})(|\bm A|^2 + \epsilon^2)^{1/2} \cdot (|\bm A|^2 + \epsilon^2)^{1/2} - \big[ \bm B : (s_h + \operatorname{id})\bm A \big] \big[ \bm A : \bm B \big]
			}{
				s_h(|\bm A|^2 + \epsilon^2)^{1/2} \cdot (s_h + \operatorname{id})(|\bm A|^2 + \epsilon^2)^{1/2} \cdot (|\bm A|^2 + \epsilon^2)^{1/2}
			} \, d\bm y + M_{21} \ge 0 + M_{21}, \\
		&\hspace{3cm}
			|M_{21}| \le C \|\tilde g\|_{L^2(K_+)} \|\theta \, \nabla D_h \tilde{\bm u}\|_{\bm L^2(K_+)},
	\end{align*}
	where $\operatorname{id}$ means an identity operator and we set $\bm A := (\nabla\tilde{\bm u})\bm\Phi$ and $\bm B := \nabla(D_h\tilde{\bm u}) \bm\Phi$.
	
	Collecting the above estimates, applying the arithmetic-geometric inequality, and absorbing $\|\theta \, \nabla D_h \tilde{\bm u}\|_{\bm L^2(K_+)}^2$ to the right-hand side of \eref{eq1: proof of tangential direction}, we arrive at
	\begin{equation} \label{eq3: proof of tangential direction}
		\|\theta \, \nabla D_h \tilde{\bm u}\|_{\bm L^2(K_+)}^2
		\le C (\|\tilde{\bm f}\|_{\bm L^2(K_+)}^2 + \|\tilde g\|_{H^1(K_+)}^2 + \|\tilde{\bm u}\|_{\bm H^1(K_+)}^2 + \|\tilde p\|_{L^2(K_+)}^2),
	\end{equation}
	which implies
	\begin{equation*}
		\|\nabla D_h \tilde{\bm u}\|_{\bm L^2(K_+ \cap W)}
			\le C (\|\tilde{\bm f}\|_{\bm L^2(K_+)} + \|\tilde g\|_{H^1(K_+)} + \|\tilde{\bm u}\|_{\bm H^1(K_+)} + \|\tilde p\|_{L^2(K_+)}).
	\end{equation*}
	Recalling that $D_h$ means $D_h^\alpha$ where $\alpha = 1, \dots, d-1$ is arbitrarily fixed, we take the limit $h \to 0$ to conclude $\nabla \nabla' \tilde{\bm u} \in \bm L^2(K_+ \cap W)$ and
	\begin{equation} \label{eq2: proof of tangential direction}
		\|\nabla \nabla' \tilde{\bm u}\|_{\bm L^2(K_+ \cap W)} \le C (\|\tilde{\bm f}\|_{\bm L^2(K_+)} + \|\tilde g\|_{H^1(K_+)} + \|\tilde{\bm u}\|_{\bm H^1(K_+)} + \|\tilde p\|_{L^2(K_+)}).
	\end{equation}
	This completes the proof of \lref{lem: tangential direction}.
\end{proof}

\subsubsection{Estimates for $\nabla' \tilde p$ (not uniform in $\epsilon$)} \label{sec: nabla' p}
Let us state an inf-sup condition adapted to the $\bm y$-coordinate.
\begin{lem} \label{lem: inf-sup in y}
	There exist positive constants $C_1$ and $C_2$ depending only on $\Omega$ and $U_{\bm x_0}$ such that
	\begin{equation*}
		C_1 \|\tilde q\|_{L^2(K_+)} \le \sup_{\tilde{\bm v} \in \bm H^1_0(K_+)} \frac{ \int_{K_+} \tilde q \, \partial_k \tilde v_i \Phi_{ki} \, d\bm y }{ \|\tilde{\bm v}\|_{\bm H^1(K_+)} } + C_2 \bigg| \int_{K_+} \tilde q \, d\bm y \bigg| \qquad \forall \tilde q \in L^2(K_+).
	\end{equation*}
\end{lem}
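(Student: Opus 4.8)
The plan is to derive this \emph{weighted} inf-sup condition from the standard inf-sup condition on $K_+$ by pulling back through the change of variables $\bm\psi$. First recall that for the reference half-cube $K_+$ there holds the usual Ne\v{c}as/Bogovskii-type estimate: for every $\tilde q \in L^2_0(K_+)$ one has $C\|\tilde q\|_{L^2(K_+)} \le \sup_{\tilde{\bm v}\in\bm H^1_0(K_+)} (\tilde q, \operatorname{div}\tilde{\bm v})/\|\tilde{\bm v}\|_{\bm H^1(K_+)}$, equivalently there exists $\tilde{\bm w}\in\bm H^1_0(K_+)$ with $\operatorname{div}\tilde{\bm w} = \tilde q$ and $\|\tilde{\bm w}\|_{\bm H^1(K_+)}\le C\|\tilde q\|_{L^2(K_+)}$ (see \cite[p.\ 81]{GiRa1986}). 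The point is that the bilinear form appearing in \lref{lem: inf-sup in y} is not $(\tilde q,\operatorname{div}\tilde{\bm v})$ but the \emph{transformed divergence} $\int_{K_+}\tilde q\,\partial_k\tilde v_i\Phi_{ki}\,d\bm y$, which is precisely the pullback of $\int_\Omega q\,\operatorname{div}\bm v\,d\bm x$ after absorbing the Jacobian determinant (as was done when deriving \eref{eq: weak form in y}). So the natural route is: given $\tilde q$, split it as $\tilde q = \tilde q_0 + c$ with $c = \frac{1}{|K_+|}\int_{K_+}\tilde q\,d\bm y$ and $\tilde q_0\in L^2_0(K_+)$; handle $\tilde q_0$ via a transported Bogovskii operator, and let the second term $C_2|\int_{K_+}\tilde q\,d\bm y|$ on the right absorb the mean-value part $c$.

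The key steps, in order, are as follows. \textbf{Step 1.} Transport the inf-sup condition from $\Omega\cap U_{\bm x_0}$ to $K_+$. Concretely, for $\tilde q_0\in L^2_0(K_+)$ set $q_0 := \tilde q_0\circ\bm\varphi$ on $\Omega\cap U_{\bm x_0}$ (up to the Jacobian factor, so that $\int_{\Omega\cap U_{\bm x_0}} q_0\,d\bm x$ is controlled), extend by a Bogovskii-type construction or directly invoke that $\Omega\cap U_{\bm x_0}$ is a bounded Lipschitz domain to get $\bm w\in\bm H^1_0(\Omega\cap U_{\bm x_0})$ with $\operatorname{div}\bm w = q_0 - (\text{its mean})$ and $\|\bm w\|_{\bm H^1}\le C\|q_0\|_{L^2}$. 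Actually it is cleaner to work the other way: take the standard $\tilde{\bm w}\in\bm H^1_0(K_+)$ solving $\operatorname{div}\tilde{\bm w} = \tilde q_0$ with $\|\tilde{\bm w}\|_{\bm H^1(K_+)}\le C\|\tilde q_0\|_{L^2(K_+)}$, and then show $\int_{K_+}\tilde q_0\,\partial_k\tilde w_i\Phi_{ki}\,d\bm y$ differs from $\int_{K_+}\tilde q_0\,\operatorname{div}\tilde{\bm w}\,d\bm y = \|\tilde q_0\|_{L^2(K_+)}^2$ by a term involving only first derivatives of $\bm\Phi$ times $\tilde{\bm w}$ (not $\nabla\tilde{\bm w}$), after an integration by parts exploiting $\tilde{\bm w}|_{K_0}=\bm 0$; this error is bounded by $C\|\tilde q_0\|_{L^2(K_+)}\|\tilde{\bm w}\|_{\bm L^2(K_+)}\le C\|\tilde q_0\|_{L^2(K_+)}^2$. \textbf{Step 2.} Since $\bm\Phi(\bm 0)$ is \emph{not} the identity (it equals $\bm\Psi^{-1}$ from \eref{eq: Psi and Phi}) but is invertible with $\det\bm\Phi$ bounded away from $0$, a constant-coefficient freezing argument: write $\partial_k\tilde v_i\Phi_{ki} = \partial_k\tilde v_i\Phi_{ki}(\bm 0) + \partial_k\tilde v_i(\Phi_{ki}-\Phi_{ki}(\bm 0))$; the frozen part is a constant invertible linear combination of the $\partial_k\tilde v_i$, hence satisfies the standard inf-sup by a linear change of the vector field $\tilde{\bm v}\mapsto\bm\Phi(\bm 0)^{-\top}\tilde{\bm v}$ (or rather choosing the test field as the pushforward of a Bogovskii field), and the variable part is a small-in-$L^\infty$ perturbation \emph{on the small cube $K$}—here one shrinks $r$ so that $\|\bm\Phi-\bm\Phi(\bm 0)\|_{\bm L^\infty(K)}$ is small relative to the frozen inf-sup constant, which is permissible since the remark after the local-coordinate setup allows $r$ to be chosen uniformly small. \textbf{Step 3.} Finally, combine: for general $\tilde q$, $\|\tilde q\|_{L^2(K_+)}\le\|\tilde q_0\|_{L^2(K_+)} + |c|\,|K_+|^{1/2}$ with $|c|\,|K_+|\le C|\int_{K_+}\tilde q\,d\bm y|$; apply Steps 1–2 to $\tilde q_0$, noting $\int_{K_+}\tilde q\,\partial_k\tilde v_i\Phi_{ki} = \int_{K_+}\tilde q_0\,\partial_k\tilde v_i\Phi_{ki} + c\int_{K_+}\partial_k\tilde v_i\Phi_{ki}$ and that $|\int_{K_+}\partial_k\tilde v_i\Phi_{ki}\,d\bm y| = |\int_{K_+}\tilde v_i\,\partial_k\Phi_{ki}\,d\bm y|\le C\|\tilde{\bm v}\|_{\bm L^2(K_+)}$ (boundary term vanishes as $\tilde{\bm v}\in\bm H^1_0(K_+)$), which is harmless. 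This yields the stated inequality with $C_1,C_2$ depending only on $\Omega,U_{\bm x_0}$.

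The main obstacle is \textbf{Step 2}: ensuring that the variable-coefficient operator $\tilde{\bm v}\mapsto\partial_k\tilde v_i\Phi_{ki}$ still satisfies an inf-sup bound. One must be careful that the natural "frozen" operator $\partial_k\tilde v_i\Phi_{ki}(\bm 0)$ does enjoy the inf-sup condition with a constant independent of the particular $\bm x_0$—this follows because $\bm\Phi(\bm 0)^{\top}$ ranges over a compact set of invertible matrices (uniform bounds on $\|\rho\|_{C^{3,1}}$) and the map $\tilde{\bm v}\mapsto\bm\Phi(\bm 0)\tilde{\bm v}$ is a bijective bounded-with-bounded-inverse operator on $\bm H^1_0(K_+)$ transforming $\partial_k\tilde v_i\Phi_{ki}(\bm 0)$ into the plain divergence of the new field—and that the perturbation has $\bm L^\infty(K)$-norm that can be made arbitrarily small by shrinking $r$, since $\bm\Phi\in\bm C^{2,1}$ and $\bm\Phi(\bm 0)$ is its value at the center. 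An alternative, perhaps cleaner, obstacle-avoiding route is to bypass freezing entirely and transport \emph{the solution operator}: given $\tilde q_0\in L^2_0(K_+)$, let $q_0$ be the function on $\Omega\cap U_{\bm x_0}$ whose pullback with the Jacobian weight is $\tilde q_0$; since $\Omega\cap U_{\bm x_0}$ is a fixed bounded Lipschitz domain, the Bogovskii operator there gives $\bm w\in\bm H^1_0(\Omega\cap U_{\bm x_0})$ with $\operatorname{div}\bm w$ equal to $q_0$ minus its mean and $\|\bm w\|_{\bm H^1}\lesssim\|q_0\|_{L^2}$; pushing $\bm w$ back to $K_+$ via $\bm\psi$ gives the required test field directly, and the change-of-variables identities already established in the derivation of \eref{eq: weak form in y} turn $\int_\Omega q_0\operatorname{div}\bm w\,d\bm x$ into $\int_{K_+}\tilde q_0\,\partial_k\tilde w_i\Phi_{ki}\,d\bm y$ up to controlled lower-order terms. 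Either way, the mean-value defect of $q_0$ under pullback is exactly what forces the extra term $C_2|\int_{K_+}\tilde q\,d\bm y|$ on the right-hand side.
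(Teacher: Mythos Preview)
Your freezing-plus-perturbation route (Step~2) is valid and would yield the lemma, but it is considerably more elaborate than what the paper does, and your Step~1 contains an incorrect intermediate claim. The assertion that $\int_{K_+}\tilde q_0\,\partial_k\tilde w_i\Phi_{ki}\,d\bm y$ differs from $\|\tilde q_0\|_{L^2}^2$ by a term involving only $\tilde{\bm w}$ (not $\nabla\tilde{\bm w}$) ``after an integration by parts'' is false: integrating $\partial_k$ by parts would land a derivative on $\tilde q_0$, which is merely in $L^2$. The actual difference $\int_{K_+}\tilde q_0\,\partial_k\tilde w_i(\Phi_{ki}-\delta_{ki})\,d\bm y$ is bounded only by $\|\bm\Phi-\bm I\|_{L^\infty}\|\tilde q_0\|_{L^2}\|\nabla\tilde{\bm w}\|_{L^2}$, and since $\bm\Phi(\bm 0)\neq\bm I$ this is not small. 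You correctly recognise this and pass to freezing at $\bm\Phi(\bm 0)$ in Step~2, which repairs the argument at the cost of shrinking $r$.

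The paper's proof is precisely your ``alternative route'', executed with one clean trick. It starts from the standard inf-sup inequality on the Lipschitz domain $\Omega\cap U_{\bm x_0}$ in the $\bm x$-variable, pulls everything back to $K_+$ via $\bm\psi$ (so that $\operatorname{div}_{\bm x}\bm v$ becomes $\partial_k\tilde v_i\Phi_{ki}$ and $d\bm x=|J|\,d\bm y$), and then simply substitutes $\tilde q|J|$ for $\tilde q$. The factor $|J|$ cancels \emph{exactly} in both the supremum term and the mean-value term, while $1/|J|\ge C$ handles the $L^2$-norm on the left; the inequality drops out in three lines with no perturbation argument, no shrinking of $r$, and no ``controlled lower-order terms''. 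What your freezing approach buys is that you only ever use the inf-sup constant of the fixed reference cube $K_+$, at the price of a smallness condition on $r$; the paper instead appeals once to the inf-sup on $\Omega\cap U_{\bm x_0}$ and avoids all perturbation analysis.
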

\begin{proof}
	Given $\tilde q \in L^2(K_+)$, we start from an inf-sup condition associated with the Lipschitz domain $\Omega \cap U_{\bm x_0}$ (see \cite[Lemma I.4.1(ii)]{GiRa1986}) in the $\bm x$-coordinate:
	\begin{equation*}
		C \bigg\| q - \frac1{|\Omega \cap U_{\bm x_0}|} \int_{\Omega \cap U_{\bm x_0}} q \, d\bm x \bigg\|_{L^2(\Omega)}
			\le \sup_{\bm v \in \bm H^1_0(\Omega \cap U_{\bm x_0})} \frac{ \int_{\Omega \cap U_{\bm x_0}} q \operatorname{div} \bm v \, d\bm x }{ \|\bm v\|_{\bm H^1(\Omega \cap U_{\bm x_0})} },
	\end{equation*}
	where $|\cdot|$ means the $d$-dimensional measure.
	Obviously,
	\begin{equation*}
		C \|q\|_{L^2(\Omega)} 
			\le \sup_{\bm v \in \bm H^1_0(\Omega \cap U_{\bm x_0})} \frac{ \int_{\Omega \cap U_{\bm x_0}} q \operatorname{div} \bm v \, d\bm x }{ \|\bm v\|_{\bm H^1(\Omega \cap U_{\bm x_0})} }
			+ C \bigg| \int_{\Omega \cap U_{\bm x_0}} q \, d\bm x \bigg|.
	\end{equation*}	
	Transforming this into the $\bm y$-coordinate, we obtain
	\begin{equation*}
		C \bigg( \int_{K_+} |\tilde q|^2 |J| \, d\bm y \bigg)^{1/2}
			\le \sup_{\tilde{\bm v} \in \bm H^1_0(K_+)} \frac{ \int_{K_+} \tilde q \, \partial_k \tilde v_i \Phi_{ki} |J| \, d\bm y }{ \|\tilde{\bm v}\|_{\bm H^1(K_+)} } + C \bigg| \int_{K_+} \tilde q |J| \, d\bm y \bigg|.
	\end{equation*}
	Regarding $\tilde q |J|$ as new $\tilde q$ and using $C \le 1/|J|$, we conclude the desired estimate.
\end{proof}

Let $\tilde{\bm v} \in \bm H^1_0(K_+)$ be arbitrary and $0< |h| < (1/2) \operatorname{dist}(\operatorname{supp}\theta, \partial K)$.
We take $D_{-h}(\theta \underline{\tilde{\bm v}})$ as a test function in \eref{eq2: weak form in y} and then return to the original frame $\tilde{\bm u}, \tilde{\bm v}$ to have
\begin{align*}
	\int_{K_+} (\theta D_h \tilde p) \, \partial_k \tilde v_i \Phi_{ki} \, d\bm y
	&= \int_{K_+} \theta (\partial_k D_h\tilde u_i) \Phi_{kj} \, \partial_l \tilde v_i \Phi_{lj} \, d\bm y
		+ \int_{K_+} \theta \tilde g \, D_h \Big[ \frac{\partial_k \tilde u_i}{( |(\nabla\tilde{\bm u})\bm\Phi|^2 + \epsilon^2 )^{1/2}} \Big] \Phi_{kj} \, \partial_l \tilde v_i \Phi_{lj} \, d\bm y \\
	&\hspace{3cm} - \int_{K_+} \tilde F_i \, D_{-h} (\theta \tilde v_i) \, d\bm y + N_1,
\end{align*}
where, as in the proof of \lref{lem: tangential direction}, only principal parts are explicitly shown and all the other lower-order terms are represented by $N_1$.
We see that $N_1$ is bounded as
\begin{equation*}
	|N_1| \le C (\|\tilde g\|_{H^1(K_+)} + \|\tilde{\bm u}\|_{\bm H^1(K_+)} + \|\tilde p\|_{L^2(K_+)}) \|\tilde{\bm v}\|_{\bm H^1(K_+)}.
\end{equation*}
We also notice that
\begin{equation*}
	\left| D_h \Big[ \frac{\partial_k \tilde u_i}{( |(\nabla\tilde{\bm u})\bm\Phi|^2 + \epsilon^2 )^{1/2}} \Big] \Phi_{kj} \right|
	\le C(1 + \epsilon^{-1}) |\nabla D_h\tilde{\bm u}| \quad\text{a.e.\ in } K_+ \quad (i, j = 1, \dots, d).
\end{equation*}
Consequently,
\begin{align*}
	\frac{\displaystyle \bigg| \int_{K_+} (\theta D_h \tilde p) \, \partial_k \tilde v_i \Phi_{ki} \, d\bm y \bigg| }{ \|\tilde{\bm v}\|_{\bm H^1(K_+)} }
	&\le C(1 + \epsilon^{-1}) (\|\tilde{\bm f}\|_{\bm L^2(K_+)} + \|\tilde g\|_{H^1(K_+)} + \|\tilde{\bm u}\|_{\bm H^1(K_+)} + \|\tilde p\|_{L^2(K_+)}
		+ \|\theta \nabla D_h \tilde{\bm u}\|_{\bm L^2(K_+)}) \\
	&\le C(1 + \epsilon^{-1}) (\|\tilde{\bm f}\|_{\bm L^2(K_+)} + \|\tilde g\|_{H^1(K_+)} + \|\tilde{\bm u}\|_{\bm H^1(K_+)} + \|\tilde p\|_{L^2(K_+)}),
\end{align*}
where we have used \eref{eq2: proof of tangential direction}.
Substituting this into \lref{lem: inf-sup in y} with $\tilde q = \theta D_h \tilde p$ and noting that $|\int_{K_+} \theta \, (D_h\tilde p) \, d\bm y| = |\int_{K_+} (D_{-h}\theta) \, \tilde p \, d\bm y| \le C \|\tilde p\|_{L^2(K_+)}$, we deduce
\begin{equation*}
	\|\theta D_h\tilde p\|_{L^2(K_+)} \le C(1 + \epsilon^{-1}) (\|\tilde{\bm f}\|_{\bm L^2(K_+)} + \|\tilde g\|_{H^1(K_+)} + \|\tilde{\bm u}\|_{\bm H^1(K_+)} + \|\tilde p\|_{L^2(K_+)}).
\end{equation*}
Restricting to $K_{+} \cap W$ and making $h \to 0$ prove $\nabla' \tilde p \in \bm L^2(K_{+} \cap W)$.

\begin{rem} \label{rem: interior H2}
	If $\bm x_0 \in \Omega$ and its neighborhoods $W_{\bm x_0}, U_{\bm x_0}$ are chosen as in \pref{main prop}(ii), then the above difference-quotient argument works as well for all the directions $\alpha = 1, \dots, d$ (and for $|h|$ sufficiently small) without transforming the domain to a half-space (in other words, one may assume $\bm\varphi$ is identity).
	Therefore, \pref{main prop}(ii), that is, the interior regularities $\bm u \in \bm H^2_\text{loc}(\Omega)$ and $p \in H^1_\text{loc}(\Omega)$ are already proved; cf.\ \cite[Theorem 3.3.1]{FuSe2000}.
	In particular, $\nabla^2\bm u$ and $\nabla p$ make sense as measurable functions defined in $\Omega$, and the strong form of the PDE holds in the almost-everywhere sense:
	\begin{equation*}
		-\Delta \bm u - \operatorname{div} \Big( g \frac{\nabla \bm u}{\sqrt{|\nabla \bm u|^2 + \epsilon^2}} \Big) + \nabla p = \bm f \quad\text{a.e.\ in } \Omega.
	\end{equation*}
\end{rem}

\subsection{Regularity in the normal direction}
It remains to examine the regularity of $\tilde{\bm u}$ w.r.t.\ $y_d$.
To this end, we re-choose $W$ and $\theta$ in such a way that $W := (-r/4, r/4)^d$ and $\theta \in C_0^\infty((-r/2, r/2)^d)$, $\theta \equiv 1$ in $\overline{W}$ and $0\le \theta \le1$ in $\mathbb R^d$.
The following is a key lemma of this subsection.
\begin{lem} \label{lem: normal direction}
	We have
	\begin{equation*}
		\|\nabla\partial_d \tilde{\bm u}\|_{\bm L^2(K_+ \cap W)} \le C(\|\bm f\|_{\bm L^2(\Omega \cap U_{\bm x_0})} + \|g\|_{H^1(\Omega \cap U_{\bm x_0})}
			+ \|\bm u\|_{\bm H^1(\Omega \cap U_{\bm x_0})} + \|p\|_{L^2(\Omega \cap U_{\bm x_0})}).
	\end{equation*}
\end{lem}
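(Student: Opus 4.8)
The plan is to supply the one missing estimate, namely an $\epsilon$-uniform bound for $\partial_d^2\tilde{\bm u}$ in $\bm L^2(K_+\cap W)$; combined with $\nabla\nabla'\tilde{\bm u}\in\bm L^2(K_+\cap W)$ from \lref{lem: tangential direction} this gives $\nabla\partial_d\tilde{\bm u}$. First I would dispose of the normal frame-component: differentiating the transformed incompressibility constraint $\partial_k\tilde u_i\Phi_{ki}=0$ once in $y_d$ and using $\partial_d\mathring{\bm\Psi}=\bm O$ (so $\partial_d^2\tilde u_i=\mathring\Psi_{ia}\partial_d^2\underline{\tilde u}_a$) yields the pointwise identity $(\bm\Phi\mathring{\bm\Psi})_{da}\,\partial_d^2\underline{\tilde u}_a=-\sum_{k<d}\Phi_{ki}\partial_k\partial_d\tilde u_i-(\partial_d\Phi_{ki})\partial_k\tilde u_i$ on $K_+$, whose coefficient vector $(\bm\Phi\mathring{\bm\Psi})_{d\,\cdot}$ equals $\bm e_d$ on $K_0$ and stays uniformly close to it on a small $K$, and whose right-hand side is controlled by \lref{lem: tangential direction} with an $\epsilon$-independent constant. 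Hence, once the tangential-frame components $\partial_d^2\underline{\tilde u}_\alpha$ ($\alpha<d$) are estimated, $\partial_d^2\underline{\tilde u}_d$ follows, and everything reduces to them.

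For the tangential-frame components I would run an energy argument paralleling the proof of \lref{lem: tangential direction}, with the tangential difference quotient replaced by a genuine normal derivative: test \eref{eq2: weak form in y} with the field $\underline{\tilde{\bm v}}$ whose components are $\underline{\tilde v}_\alpha=-\partial_d(\theta^2\partial_d\underline{\tilde u}_\alpha)$ for $\alpha<d$ (no constraint on $K_0$ is imposed on these) and whose $d$-th component is $-\partial_d(\theta^2\partial_d\underline{\tilde u}_d)$ modified by a correction ensuring $\underline{\tilde v}_d=0$ on $K_0$; since $\underline{\tilde u}_d=0$ on $K_0$ (\lref{lem: properties of new frame}(ii)), such a correction exists and is controlled by the quantities already estimated through the differentiated constraint above. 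Writing the identity as $J_1+J_2+J_3=J_4$ and extracting principal parts exactly as in \lref{lem: tangential direction}: (a) after integration by parts $J_1=\int_{K_+}\theta^2|(\nabla\partial_d\tilde{\bm u})\bm\Phi|^2\,d\bm y+M_1+(\text{a }K_0\text{-boundary term})\ge C\|\theta\,\partial_d\nabla\tilde{\bm u}\|_{\bm L^2(K_+)}^2+M_1+(\text{bdry})$; (b) the singular term $J_2$ has a principal part that is $\ge 0$ by the Cauchy--Schwarz inequality $|\bm P|^2|\bm Q||\bm R|\ge(\bm P:\bm Q)(\bm R:\bm P)$ applied with $\bm A=(\nabla\tilde{\bm u})\bm\Phi$ and $\bm B=\nabla(\partial_d\tilde{\bm u})\bm\Phi$ --- exactly as for $J_{21}$ --- and may be discarded, its remainder being bounded by $C\|\tilde g\|_{H^1(K_+)}\|\theta\,\partial_d\nabla\tilde{\bm u}\|_{\bm L^2(K_+)}$; (c) in the pressure term $J_3=-\int_{K_+}\tilde p\,\partial_k(\mathring\Psi_{ib}\underline{\tilde v}_b)\Phi_{ki}\,d\bm y$ the apparent third-order derivatives of $\tilde{\bm u}$ collapse, by twice differentiating $\partial_k\tilde u_i\Phi_{ki}=0$ (which turns $\Phi_{ki}\partial_k\partial_d^2\tilde u_i$ into $-2(\partial_d\Phi_{di})\partial_d^2\tilde u_i$ plus lower-order terms, all coefficients \emph{bounded and $\epsilon$-independent} since $\bm\Phi\in C^{1,1}$ by the $C^{3,1}$-regularity of $\Omega$), so that $|J_3|\le C\|\tilde p\|_{L^2(K_+)}\bigl(\|\tilde{\bm u}\|_{\bm H^1(K_+)}+\|\nabla\nabla'\tilde{\bm u}\|_{\bm L^2(K_+\cap W)}+\|\theta\,\partial_d\nabla\tilde{\bm u}\|_{\bm L^2(K_+)}\bigr)$; crucially \emph{only} $\|\tilde p\|_{L^2}$ appears --- the pressure is never differentiated --- and this norm is $\epsilon$-uniform by \pref{prop: VEeps}; (d) $J_4\le\|\tilde{\bm F}\|_{\bm L^2(K_+)}(\cdots)$ with $\|\tilde{\bm F}\|_{\bm L^2(K_+)}$ controlled by \eref{eq: tilde F}. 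Young's inequality, absorption of $\|\theta\,\partial_d\nabla\tilde{\bm u}\|_{\bm L^2}^2$, insertion of \lref{lem: tangential direction}, and passage back to the $\bm x$-coordinate (equivalence of $\bm H^m$-norms and $\bm\varphi(\Omega\cap U_{\bm x_0})=K_+$) then yield the assertion.

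Two points require genuine care and constitute the main obstacle. First, the test field above formally involves $\partial_d^2\tilde{\bm u}$, a priori known to lie in $\bm L^2$ only on compact subsets of $K_+$ (interior regularity, \rref{rem: interior H2}), not up to $K_0$; I would therefore carry out the identity first with the domain translated slightly into $K_+$ (or with a mollification in $y_d$), let the translation tend to $0$, and absorb the spurious interior-boundary contributions --- these live on a hyperplane away from $K_0$, where the interior estimate controls them --- and a standard iteration over nested cutoffs disposes of the $\partial_d^2\tilde{\bm u}$-terms supported on the (tangential) transition region of $\theta$. Second, and this is precisely where the perfect-slip condition \eref{eq2: slipBC} is used, one must check that every $K_0$-boundary term produced by the integrations by parts either vanishes or is absorbed: thanks to $\partial_d\mathring{\bm\Psi}=\bm O$, to $\underline{\tilde v}_d=0$ on $K_0$, and to $(\nabla\bm u\,\bm n)_\tau=\bm 0$ (\lref{lem: properties of new frame}(iii)), the boundary contribution of the pressure reads $\int_{K_0}\tilde p\,(\bm\Phi\mathring{\bm\Psi})_{db}\,\underline{\tilde v}_b=\int_{K_0}\tilde p\,\underline{\tilde v}_d=0$, and the remaining ones reduce through the differentiated incompressibility constraint to lower-order quantities; with the no-slip condition this cancellation is lost, consistently with the difficulty emphasised in the introduction.
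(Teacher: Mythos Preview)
Your outline is close in spirit to the paper's argument, but there are two genuine gaps.

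\medskip
\textbf{The hypothesis $g\in H^1_0(\Omega)$ is never used.} To extract the non-negative principal part from $J_2$ you must move one $\partial_d$ off the test function by integration by parts in $y_d$; this produces a $K_0$-boundary integral of the form
\[
\int_{K_0}\tilde g\,\frac{\partial_k\tilde u_i\,\Phi_{kj}}{(|(\nabla\tilde{\bm u})\bm\Phi|^2+\epsilon^2)^{1/2}}\,\Phi_{lj}\,\partial_l(\theta^2\partial_d\tilde u_i)\,d\bm y'.
\]
Unlike the analogous term for $J_1$, this does \emph{not} reduce to lower order via the slip condition and incompressibility: the singular factor $\tilde g/(|(\nabla\tilde{\bm u})\bm\Phi|^2+\epsilon^2)^{1/2}$ obstructs the final integration by parts in the tangential variable that disposed of \eref{eq: this breaks down for singular diffusion} (cf.\ \rref{rem: H^1_0 is inevitable}(ii)). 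The paper removes this term precisely by assuming $\tilde g|_{K_0}=0$; your sketch omits this.

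\medskip
\textbf{The test function is not admissible and the proposed fix is insufficient.} Your $\underline{\tilde{\bm v}}$ has principal part $-\theta^2\partial_d^2\underline{\tilde{\bm u}}$, which you only know to lie in $\bm L^2_{\rm loc}(K_+)$ (interior regularity), not in $\bm H^1(K_+)$, so neither testing \eref{eq2: weak form in y} nor the ``pointwise collapse'' of third derivatives in $J_3$ is justified. A translation into $K_+$ or mollification in $y_d$ does not supply the missing derivative up to $K_0$ and destroys the boundary relations $\underline{\tilde u}_d=0$, $\partial_d\underline{\tilde{\bm u}}'=\bm 0$ that you rely on. The paper resolves this in two steps that your proposal lacks: first, a preliminary (not $\epsilon$-uniform) proof that $\partial_d^2\tilde{\bm u},\partial_d\tilde p\in L^2(K_+\cap W)$ obtained by a pointwise argument on the strong form \eref{eq: strong form in y}, so that multiplication by $-\partial_d(\theta^2\partial_d\tilde u_i)\in L^2$ is legitimate and the slip condition $(\nabla\bm u\,\bm n)_\tau=\bm 0$ holds as a genuine trace; second, an $H^3$-approximation $\bm u^{(m)}$ via the linear Stokes problem \eref{eq: H3 approximation}, which makes the interchange $\partial_l\leftrightarrow\partial_d$ in $I_1,I_2$ rigorous and gives meaning to the second-order traces on $K_0$.

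\medskip
Your idea for the pressure term---enforce $\underline{\tilde v}_d=0$ on $K_0$ and use the twice-differentiated constraint to avoid boundary contributions---is a reasonable alternative to the paper's route (an integration by parts producing $I_{32}^{(m)}=\int_{K_0}\tilde p\,\partial_d(\theta^2\partial_d\underline{\tilde u}_d^{(m)})\,d\bm y'$, then controlled by an $H^{-1/2}(K_0)+L^2(K_0)$ trace estimate for $\tilde p$, \lref{lem: trace of p}). But it is only formal until the test-function regularity issue above is settled, and it still requires $\partial_d^2\bm\Phi\in L^\infty$, i.e.\ the $C^{3,1}$-regularity of $\Omega$, which you should flag.
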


Once this is proved, Lemmas \ref{lem: tangential direction} and \ref{lem: normal direction}, transformed back to the $\bm x$-coordinate, imply
\begin{equation*}
	\|\nabla^2 \bm u\|_{\bm L^2(\Omega \cap W_{\bm x_0})} \le C(\|\bm f\|_{\bm L^2(\Omega \cap U_{\bm x_0})} + \|g\|_{H^1(\Omega \cap U_{\bm x_0})}
			+ \|\bm u\|_{\bm H^1(\Omega \cap U_{\bm x_0})} + \|p\|_{L^2(\Omega \cap U_{\bm x_0})}),
\end{equation*}
which completes the proof of \pref{main prop}(i).
Now we show \lref{lem: normal direction} via the following four steps.

\subsubsection{Proof of $\partial_d^2 \tilde{\bm u} \in \bm L^2(K_+)$ and $\partial_d \tilde p \in L^2(K_+)$}
Let us first prove $\bm u \in \bm H^2(\Omega)$ and $p \in H^1(\Omega)$ up to the boundary, allowing the estimates to depend on $\epsilon$.
Observe that the strong form of \eref{eq: weak form in y} is as follows:
\begin{equation} \label{eq: strong form in y}
\begin{aligned}
	- \partial_l \big( \partial_k \tilde u_i \Phi_{kj} \Phi_{lj} \big)
	- \partial_l \bigg[ \tilde g \, \frac{ \partial_k \tilde u_i \Phi_{kj} }{ ( |(\nabla\tilde{\bm u})\bm\Phi|^2 + \epsilon^2 )^{1/2} } \Phi_{lj} \bigg]
	+ \partial_k (\tilde p \, \Phi_{ki} ) &= \tilde F_i \quad\text{in}\; K_+ \quad (i = 1, \dots, d), \\
	\partial_k \tilde u_i \Phi_{ki} &= 0 \quad\text{in}\; K_+,
\end{aligned}
\end{equation}
which, as noticed in \rref{rem: interior H2}, holds in the almost-everywhere sense.
Focusing on the principal terms, we have
\begin{equation*}
	- \Phi_{kj} \Phi_{lj} \partial_k\partial_l \tilde u_i
	- \tilde g \, \Phi_{kj} \Phi_{lj} \, \partial_l \bigg[
		\frac{\partial_k \tilde u_i }{ ( |(\nabla\tilde{\bm u})\bm\Phi|^2 + \epsilon^2 )^{1/2} }
	\bigg] + \Phi_{ki} \partial_k \tilde p \in L^2(K_+) \quad (i = 1, \dots, d).
\end{equation*}
From the tangential regularity $\nabla\nabla' \tilde{\bm u} \in \bm L^2(K_+ \cap W)$ we see that (note that $S := \sum_{j=1}^d |\Phi_{dj}|^2 > 0$):
\begin{equation} \label{eq: 5.2}
	- \bigg[ 1 + \frac{\tilde g}{(|(\nabla\tilde{\bm u})\bm\Phi|^2 + \epsilon^2)^{1/2}} \bigg] \partial_d^2 \tilde u_i
	- \tilde g \, \frac{\Phi_{dj}}{S} \, (\partial_k \tilde u_i \Phi_{kj}) \, \partial_d \bigg[\frac{1}{(|(\nabla\tilde{\bm u})\bm\Phi|^2 + \epsilon^2)^{1/2}}\bigg]
	+ \frac{\Phi_{di}}{S} \partial_d \tilde p \in L^2(K_+ \cap W)
\end{equation}
for $i = 1, \dots, d$.
Here notice that the divergence-free condition $\partial_k\tilde u_i \Phi_{ki} = 0$ and the tangential regularity imply 
\begin{equation*}
	\Phi_{di} \partial_d^2 \tilde u_i \in L^2(K_+ \cap W).
\end{equation*}
This together with \eref{eq: 5.2}, which is multiplied by $\Phi_{di}$ and added for $i=1, \dots, d$, deduces
\begin{equation} \label{eq: 5.4}
	\partial_d \tilde p
	\in \tilde g \, \frac{\Phi_{di} \Phi_{dj}}{S} \, (\partial_k \tilde u_i \Phi_{kj}) \, \partial_d \bigg[\frac{1}{(|(\nabla\tilde{\bm u})\bm\Phi|^2 + \epsilon^2)^{1/2}}\bigg] + L^2(K_+ \cap W).
\end{equation}

Multiplying \eref{eq: 5.2} by $-\partial_d^2 \tilde u_i$ and adding the resulting equations for $i = 1, \dots, d$, we obtain
\begin{equation} \label{eq: 5.3}
\begin{aligned}
	&\bigg[ 1 + \tilde g \frac{|(\nabla\tilde{\bm u})\bm\Phi|^2 + \epsilon^2}{(|(\nabla\tilde{\bm u})\bm\Phi|^2 + \epsilon^2)^{3/2}} \bigg] |\partial_d^2 \tilde{\bm u}|^2 \\
	&\hspace{1cm} + \tilde g \, \frac{\Phi_{dj}}{S} \, (\partial_k \tilde u_i \Phi_{kj}) \Big( 1 - \frac{|\Phi_{di}|^2}S \Big) \, \partial_d \bigg[\frac{1}{(|(\nabla\tilde{\bm u})\bm\Phi|^2 + \epsilon^2)^{1/2}}\bigg] \, \partial_d^2 \tilde u_i
	= \bm R^{(1)} \cdot \partial_d^2 \tilde{\bm u},
\end{aligned}
\end{equation}
for some $\bm R^{(1)} \in \bm L^2(K_+ \cap W)$.
We have
\begin{align}
	\partial_d \bigg[\frac{1}{(|(\nabla\tilde{\bm u})\bm\Phi|^2 + \epsilon^2)^{1/2}}\bigg]
	&= -\frac{((\nabla\tilde{\bm u})\bm\Phi) : \partial_d((\nabla\tilde{\bm u})\bm\Phi)}{(|(\nabla\tilde{\bm u})\bm\Phi|^2 + \epsilon^2)^{3/2}} \notag \\
	&= -\frac{((\nabla\tilde{\bm u})\bm\Phi) : (\nabla \partial_d\tilde{\bm u})\bm\Phi}{(|(\nabla\tilde{\bm u})\bm\Phi|^2 + \epsilon^2)^{3/2}}
		-\frac{((\nabla\tilde{\bm u})\bm\Phi) : \big[ (\nabla \tilde{\bm u}) \partial_d \bm\Phi \big] }{(|(\nabla\tilde{\bm u})\bm\Phi|^2 + \epsilon^2)^{3/2}}, \label{eq: derivative of singular diffusion}
\end{align}
in which the second term in the right-hand side is bounded by $C (|(\nabla\tilde{\bm u})\bm\Phi|^2 + \epsilon^2)^{-1/2}$.
Therefore, if we set $\bm\Phi_d := (\Phi_{d1}, \dots, \Phi_{dd})$, the second term in the left-hand side of \eref{eq: 5.3} is bounded by
\begin{align*}
	&\frac{\tilde g}{S} \left| 
		(\partial_k \tilde u_i \Phi_{kj}) \, \partial_d \bigg[\frac{1}{(|(\nabla\tilde{\bm u}) \bm\Phi|^2 + \epsilon^2)^{1/2}}\bigg] \, \partial_d^2 \tilde u_i \Phi_{dj}
		\right| \\
	\le \; & \frac{\tilde g}{S} \frac{ |((\nabla\tilde{\bm u})\bm\Phi) : (\nabla \partial_d\tilde{\bm u})\bm\Phi| \,
		|((\nabla\tilde{\bm u})\bm\Phi) : (\partial_d^2 \tilde{\bm u}) \bm\Phi_d| }
	{(|(\nabla\tilde{\bm u}) \bm\Phi|^2 + \epsilon^2)^{3/2}}
		+ \big| \bm R^{(2)} \cdot \partial_d^2 \tilde{\bm u} \big| \\
	\le \; & \frac{\tilde g}{S} \frac{
		|(\nabla\tilde{\bm u}) \bm\Phi|^2 \, |\bm\Phi_d^\top |^2 \, |\partial_d^2 \tilde{\bm u}|^2 }
	{(|(\nabla\tilde{\bm u})\bm\Phi|^2 + \epsilon^2)^{3/2}}
		+ |R^{(3)}| \, |\partial_d^2 \tilde{\bm u} \big|
	= \tilde g \frac{ |(\nabla\tilde{\bm u}) \bm\Phi|^2 }{(|(\nabla\tilde{\bm u}) \bm\Phi|^2 + \epsilon^2)^{3/2}} |\partial_d^2 \tilde{\bm u}|^2
		+ |R^{(3)}| \, |\partial_d^2 \tilde{\bm u} \big|
\end{align*}
for some $\bm R^{(2)} \in \bm L^2(K_+ \cap W)$ and $R^{(3)} \in L^2(K_+)$.
Here we have used the fact that
\begin{equation*}
	\Big( \sum_{k=1}^{d-1} \partial_k (\partial_d \tilde u_i) \Phi_{kj} \Big) (|(\nabla\tilde{\bm u})\bm\Phi|^2 + \epsilon^2)^{-1/2}
		\in L^2(K_+ \cap W) \qquad (i, j = 1, \dots, d),
\end{equation*}
which is a consequence of the tangential regularity, in the third line.
Then it follows from \eref{eq: 5.3} that
\begin{equation*}
	\bigg[ 1 + \tilde g \frac{\epsilon^2}{(|(\nabla\tilde{\bm u}) \bm\Phi|^2 + \epsilon^2)^{3/2}} \bigg] |\partial_d^2 \tilde{\bm u}|^2 \le |R^{(4)}| + \frac12 |\partial_d^2 \tilde{\bm u}|^2 \quad\text{a.e. in}\quad K_+ \cap W,
\end{equation*}
where $R^{(4)} \in L^1(K_+)$, which implies $\partial_d^2 \tilde{\bm u} \in \bm L^2(K_+ \cap W)$.
By \eref{eq: 5.4} one also has $\partial_d \tilde p \in L^2(K_+ \cap W)$.
Combined with the tangential regularity, we have
\begin{equation*}
	\tilde{\bm u} \in \bm H^2(K_+ \cap W), \quad \tilde p \in H^1(K_+ \cap W),
\end{equation*}
and hence $\bm u \in \bm H^2(\Omega \cap W_{\bm x_0})$, $p \in H^1(\Omega \cap W_{\bm x_0})$.
This combined with the interior regularity and a standard covering argument yields
\begin{equation*}
	\bm u \in \bm H^2(\Omega), \quad p \in H^1(\Omega).
\end{equation*}

\subsubsection{Satisfaction of the slip boundary condition and $H^3$-approximation for $\bm u$}
Since $\bm u \in \bm H^2(\Omega)$ and $p \in H^1(\Omega)$, the traces of $\nabla\bm u$ and $p$ are well defined in $\bm H^{1/2}(\Gamma)$ and $H^{1/2}(\Gamma)$.
Let us confirm that $\tilde{\bm u}$ satisfies the perfect slip boundary condition.
\begin{lem} \label{lem: first-order trace}
	It holds that $(\nabla \bm u \, \bm n)_\tau = \bm 0$ a.e.\ on $\Gamma$.
	This implies, in the $\bm y$-coordinate, that
	\begin{equation} \label{eq: partial_d underline tilde u}
		\partial_d \underline{\tilde{\bm u}}' = 0, \quad 
		|\partial_d \underline{\tilde u}_d| \le C(|\nabla' \tilde{\bm u}'| + |\tilde{\bm u}'|)
		\quad\text{a.e.\ on }\; K_0,
	\end{equation}
	where we recall $\nabla' = (\partial_1, \dots, \partial_{d-1})^\top$ and $\underline{\tilde{\bm u}}' = (\underline{\tilde u}_1, \dots, \underline{\tilde u}_{d-1})^\top$.
\end{lem}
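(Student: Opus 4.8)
The plan is to read off the perfect slip condition as the natural boundary condition contained in the weak formulation \eref{eq: VEeps}, now that $\bm u\in\bm H^2(\Omega)$ and $p\in H^1(\Omega)$ have been established. Set $\bm\lambda:=\nabla\bm u/\sqrt{|\nabla\bm u|^2+\epsilon^2}\in\bm L^\infty(\Omega)$, $|\bm\lambda|\le1$. Since $\Delta\bm u,\nabla p,\bm f\in\bm L^2(\Omega)$ by the global $\bm H^2$- and $H^1$-regularity just obtained, the strong form $-\Delta\bm u-\operatorname{div}(g\bm\lambda)+\nabla p=\bm f$ of \rref{rem: interior H2} holds a.e.\ in $\Omega$ with every term in $\bm L^2(\Omega)$; in particular $\operatorname{div}(g\bm\lambda)\in\bm L^2(\Omega)$, so $g\bm\lambda$ belongs to $\bm H(\operatorname{div};\Omega)$ row-wise and has a normal trace in $\bm H^{-1/2}(\Gamma)$. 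Moreover $\nabla\bm u|_\Gamma\in\bm H^{1/2}(\Gamma)$ and $p|_\Gamma\in H^{1/2}(\Gamma)$, so $(\nabla\bm u\,\bm n)_\tau$ is a tangential field in $\bm L^2(\Gamma)$ and the pointwise a.e.\ assertion is meaningful.

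The crux is the claim that $g\bm\lambda$ has \emph{vanishing} normal trace, equivalently
\begin{equation*}
	\int_\Omega g\bm\lambda:\nabla\bm v\,d\bm x+\int_\Omega\operatorname{div}(g\bm\lambda)\cdot\bm v\,d\bm x=0\qquad\forall\,\bm v\in\bm H^1(\Omega).
\end{equation*}
This holds for $\bm v\in\bm H^1_0(\Omega)$ by definition of the distributional divergence; I would extend it to $\bm H^1(\Omega)$ by applying it to $\chi_\delta\bm v\in\bm H^1_0(\Omega)$ and letting $\delta\to0$, where $\chi_\delta\in C^\infty(\overline\Omega)$ equals $1$ outside the $2\delta$-neighbourhood of $\Gamma$, vanishes inside its $\delta$-neighbourhood, and obeys $|\nabla\chi_\delta|\le C/\delta$. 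Every term but $\int_\Omega g\bm\lambda:(\bm v\otimes\nabla\chi_\delta)$ passes to the limit by dominated convergence; for the remaining term $|\int_\Omega g\bm\lambda:(\bm v\otimes\nabla\chi_\delta)|\le\tfrac C\delta\int_{A_\delta}g\,|\bm v|$ with $A_\delta:=\{\delta<\operatorname{dist}(\cdot,\Gamma)<2\delta\}$, and since $g\in H^1_0(\Omega)$ vanishes on $\Gamma$, Hardy's inequality gives $g/\operatorname{dist}(\cdot,\Gamma)\in L^2(\Omega)$, so that $\tfrac1\delta\|g\|_{L^2(A_\delta)}\le2\|g/\operatorname{dist}(\cdot,\Gamma)\|_{L^2(A_\delta)}\to0$ while $\|\bm v\|_{L^2(A_\delta)}$ stays bounded. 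I expect this to be the main obstacle: the genuinely singular term $g\bm\lambda$ need not lie in $\bm H^1(\Omega)$ (as $g$ need not be bounded), yet it must be shown to carry no boundary term, and this is exactly where the hypothesis $g\in H^1_0(\Omega)$ is used.

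Granting the claim, integrate by parts in \eref{eq: VEeps} against an arbitrary $\bm v\in\bm V$. The term $(\nabla\bm u,\nabla\bm v)$ yields $-\int_\Omega\Delta\bm u\cdot\bm v+\int_\Gamma(\nabla\bm u\,\bm n)_\tau\cdot\bm v$ (replacing $\nabla\bm u\,\bm n$ by its tangential part is legitimate because $\bm v\cdot\bm n=0$ on $\Gamma$); the pressure term yields $\int_\Omega\nabla p\cdot\bm v$, its boundary integral $\int_\Gamma p\,(\bm v\cdot\bm n)$ vanishing; and the singular term yields $-\int_\Omega\operatorname{div}(g\bm\lambda)\cdot\bm v$ with no boundary contribution, by the claim. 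Using the strong form, all volume integrals cancel and we are left with $\int_\Gamma(\nabla\bm u\,\bm n)_\tau\cdot\bm v\,dS=0$ for every $\bm v\in\bm V$. Since the traces of functions in $\bm V$ exhaust $\{\bm w\in\bm H^{1/2}(\Gamma):\bm w\cdot\bm n=0\}$, which is dense among tangential $\bm L^2(\Gamma)$-fields, and $(\nabla\bm u\,\bm n)_\tau$ is itself such a field, we obtain $(\nabla\bm u\,\bm n)_\tau=\bm 0$ a.e.\ on $\Gamma$.

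It remains to transcribe this into the $\bm y$-coordinate. The identity $\partial_d\underline{\tilde{\bm u}}'=\bm 0$ on $K_0$ is simply $(\nabla\bm u\,\bm n)_\tau=\bm 0$ on $\Gamma\cap U_{\bm x_0}$ read through \lref{lem: properties of new frame}(iii). The bound on $\partial_d\underline{\tilde u}_d$ does not use the slip condition but the constraint $\operatorname{div}\bm u=0$: transforming it and evaluating on $K_0$ gives, after a short computation based on \eref{eq: Psi and Phi}, $\partial_d\underline{\tilde u}_d=-\sum_{k<d}\partial_k\tilde u_i\,\Phi_{ki}$ on $K_0$. Since $\bm u\in\bm V$ forces $\underline{\tilde u}_d=0$ on $K_0$, \lref{lem: properties of new frame}(iv) gives $\underline{\tilde{\bm u}}'=\tilde{\bm u}'$ there and the tangential derivatives of $\underline{\tilde u}_d$ vanish on $K_0$, so $\tilde{\bm u}|_{K_0}=(\tilde{\bm u}',(\nabla'\rho)\cdot\tilde{\bm u}')$ and each $\partial_k\tilde u_i$ with $k<d$ is, on $K_0$, a linear combination of $\tilde{\bm u}'$ and $\nabla'\tilde{\bm u}'$ with coefficients bounded in terms of $\rho$; this yields $|\partial_d\underline{\tilde u}_d|\le C(|\nabla'\tilde{\bm u}'|+|\tilde{\bm u}'|)$ on $K_0$.
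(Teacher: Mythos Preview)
Your argument is correct, but it takes a longer and less general route than the paper's for the first assertion. You separate the singular tensor $g\bm\lambda$ from $\nabla\bm u$ and invest effort (cut-off plus Hardy's inequality) to show that $g\bm\lambda$ contributes no boundary term, which hinges on $g\in H^1_0(\Omega)$. The paper instead keeps the two pieces together: integrating \eref{eq: VEeps} by parts against an arbitrary $\bm v\in\bm V$ with trace $\bm\eta=\bm v|_\Gamma$ tangential yields directly
\[
\Big( \Big(1 + \tfrac{g}{\sqrt{|\nabla\bm u|^2+\epsilon^2}}\Big)(\nabla\bm u\,\bm n)_\tau,\ \bm\eta\Big)_\Gamma = 0,
\]
and since the scalar factor is $\ge 1$ one may divide and conclude $(\nabla\bm u\,\bm n)_\tau=\bm0$. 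This is shorter (no Hardy argument, no cutoff) and, as the paper explicitly remarks, does \emph{not} require $g=0$ on $\Gamma$; the regularized problem carries enough smoothness ($\bm\lambda\in\bm H^1(\Omega)\cap\bm L^\infty(\Omega)$ for fixed $\epsilon>0$, so $g\bm\lambda$ has a well-defined trace) that the combined boundary term makes sense without appealing to the vanishing of $g$. Your approach trades this observation for a more robust $H(\operatorname{div})$-trace argument that would survive even if $\bm\lambda$ had no pointwise trace, at the cost of invoking the hypothesis $g\in H^1_0(\Omega)$ earlier than necessary.

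For the second part (the $\bm y$-coordinate statements) your derivation is essentially the same as the paper's: the identity $\partial_d\underline{\tilde{\bm u}}'=\bm0$ on $K_0$ is \lref{lem: properties of new frame}(iii), and your formula $\partial_d\underline{\tilde u}_d = -\sum_{k<d}\Phi_{ki}\,\partial_k\tilde u_i$ on $K_0$ is precisely what the paper obtains from the divergence constraint after restricting to $K_0$ and using $\mathring{\bm\Phi}=\bm\Phi$ there; the paper then expands it into the explicit form $\partial_d\underline{\tilde u}_d = -\nabla'^{\top}\tilde{\bm u}' - (\nabla'\rho)^\top(\nabla'^{2}\rho)\tilde{\bm u}'$ (up to a harmless scalar factor), from which the claimed bound is immediate.
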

\begin{proof}
	Let $\bm\eta \in \bm H^{1/2}(\Gamma), \; \bm\eta \cdot \bm n = 0$ be arbitrary.
	It follows from integration by parts and \eref{eq: VEeps} that
	\begin{align*}
		-\bigg( \Big(1 + \frac{g}{\sqrt{|\nabla\bm u|^2 + \epsilon^2}} \Big) (\nabla\bm u \, \bm n)_\tau, \bm \eta \bigg)_\Gamma
		= (\bm f, \bm v) - (\nabla\bm u, \nabla\bm v) - \Big( g \frac{\nabla\bm u}{ \sqrt{|\nabla \bm u|^2 + \epsilon^2} }, \nabla \bm v \Big) + (p, \operatorname{div} \bm v) = 0,
	\end{align*}
	where $\bm v \in \bm V$ is any $\bm H^1(\Omega)$-extension of $\bm\eta$.
	Then $\big(1 + g/\sqrt{|\nabla \bm u|^2 + \epsilon^2} \big) (\nabla \bm u \, \bm n)_\tau = \bm 0$, and thus $(\nabla \bm u \, \bm n)_\tau = \bm 0$ a.e.\ on $\Gamma$.
	The first relation in \eref{eq: partial_d underline tilde u} is already obtained in \lref{lem: properties of new frame}(iii).
	To show the second one, observe that the divergence-free condition $\partial_k (\mathring\Psi_{ia} \underline{\tilde u}_a) \Phi_{ki} = 0$ yields
	\begin{equation} \label{eq: partial_d underline tilde u_d}
		\mathring\Psi_{id} (\partial_d \underline{\tilde u}_d) \Phi_{di} = -\sum_{(k, a) \neq (d, d)} \partial_k (\mathring\Psi_{ia} \underline{\tilde u}_a) \Phi_{ki} \quad\text{in }\; \bm H^1(K_+),
	\end{equation}
	in which the summation convention is still applied to $i$.
	Taking the trace to $y_d = 0$ (hence $\mathring{\bm\Psi} = \bm\Psi$) and utilizing the relations
	$\Psi_{ia} \Phi_{ki} = \delta_{ka}$, $\partial_d \underline{\tilde{\bm u}}' = \bm 0$, $\underline{\tilde u}_d = 0$, $\underline{\tilde{\bm u}}' = \tilde{\bm u}'$, and \eref{eq: Psi and Phi}, we obtain
	\begin{equation} \label{eq2: partial_d underline tilde u_d}
	\begin{aligned}
		\partial_d \underline{\tilde u}_d &= -\sum_{k, a = 1}^{d-1} (\partial_k \Psi_{ia}) \Phi_{ki} \underline{\tilde u}_a - \sum_{k, a=1}^{d-1} \delta_{ka} \partial_k \underline{\tilde u}_a
			= -\sum_{k, a = 1}^{d-1} (\partial_k \Psi_{da}) \Phi_{kd} \underline{\tilde u}_a - \sum_{k = 1}^{d-1} \partial_k \tilde u_k \\
			&= - \nabla^{\prime \top} \tilde{\bm u}' - (\nabla' \rho)^\top (\nabla^{\prime 2} \rho) \tilde{\bm u}'
			\quad\text{a.e.\ on }\; K_0,
	\end{aligned}
	\end{equation}
	where $\nabla^{\prime 2} \rho$ is the Hessian matrix of $\rho$ in $\bm y'$.
	This proves the second relation of \eref{eq: partial_d underline tilde u}.
\end{proof}
\begin{rem}
	As shown above, the assumption $g = 0$ on $\Gamma$ is not necessary here.
\end{rem}

Let us proceed to estimate $\|\partial_d^2 \tilde{\bm u}\|_{\bm L^2(K_+ \cap W)}$.
It is tempting to multiply \eref{eq: strong form in y}$_1$ by $-\partial_d (\tilde\theta^2 \partial_d\tilde u_i)$ and integrate over $K_+$.
However, for the integration by parts to be valid, we need---at least formally---third order derivatives of $\tilde{\bm u}$.
To address this issue, let us introduce an approximation $\bm u^{(m)} \in \bm H^3(\Omega)$ by solving, for $m = 1, 2, \dots$,
\begin{equation} \label{eq: H3 approximation}
\begin{aligned}
	- \Delta \bm u^{(m)} + \nabla q^{(m)} &= \bm h^{(m)} &&\text{in }\; \Omega, \\
	\operatorname{div} \bm u^{(m)} &= 0 &&\text{in }\; \Omega, \\
	\bm u^{(m)} \cdot \bm n = 0, \quad (\nabla \bm u^{(m)} \bm n)_\tau &= 0 &&\text{on }\; \Gamma,
\end{aligned}
\end{equation}
where $\bm h^{(m)} \in \bm H^1(\Omega)$ is chosen in such a way that
\begin{equation*}
	\bm h^{(m)} \to \bm h:= -\Delta \bm u + \nabla q \quad\text{ strongly in $\bm L^2(\Omega)$ as $m \to \infty$}, \quad q := 0.
\end{equation*}
By virtue of the $H^2$-regularity result for the linear Stokes equations under the perfect slip boundary condition (which may be proved similarly as in \cite[Theorem 1.2]{BdV04}), we find that
\begin{equation} \label{eq: um converges to u}
	\|\bm u^{(m)} - \bm u\|_{\bm H^2(\Omega)} \le C \|\bm h^{(m)} - \bm h\|_{\bm L^2(\Omega)} \to 0 \quad (m \to \infty).
\end{equation}

The traces of the second-order derivatives of $\bm u^{(m)}$ are now well defined in $\bm H^{1/2}(\Gamma)$, to which we obtain the following estimate.
We emphasize that the perfect slip boundary condition plays an essential role here.
\begin{lem} \label{lem: second-order trace}
	Let $\bm u^{(m)} \in \bm H^3(\Omega) \cap \bm V_\sigma$ be as above. Then we have
	\begin{equation} \label{eq: normal derivatives are bounded by tangential ones}
		\nabla' \partial_d \underline{\tilde{\bm u}}^{(m) \prime} = \bm 0, \quad |\partial_d^2 \underline{\tilde u}^{(m)}| \le C (|\nabla' \tilde{\bm u}^{(m) \prime}| + |\tilde{\bm u}^{(m) \prime}|) \quad\text{a.e.\ on }\; K_0.
	\end{equation}
\end{lem}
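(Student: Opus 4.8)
The plan is to regard the two claims as the second-order analogue of \lref{lem: first-order trace}. Two features of $\bm u^{(m)}$ are used. First, by construction $\bm u^{(m)}$ solves \eref{eq: H3 approximation}, so it satisfies the perfect slip boundary condition \emph{exactly}; hence all the first-order trace relations of \lref{lem: first-order trace} hold for $\bm u^{(m)}$ as well (their proof uses only the slip condition, $\bm u^{(m)}\cdot\bm n = 0$, and $\operatorname{div}\bm u^{(m)} = 0$) --- that is, on $K_0$ we have $\underline{\tilde u}_d^{(m)} = 0$, $\partial_d\underline{\tilde{\bm u}}^{(m)\prime} = \bm 0$, $\underline{\tilde{\bm u}}^{(m)\prime} = \tilde{\bm u}^{(m)\prime}$, and $|\partial_d\underline{\tilde u}_d^{(m)}| \le C(|\nabla'\tilde{\bm u}^{(m)\prime}| + |\tilde{\bm u}^{(m)\prime}|)$. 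Second, $\bm u^{(m)} \in \bm H^3(\Omega)$, i.e.\ $\underline{\tilde{\bm u}}^{(m)} \in \bm H^3(K_+)$ in local coordinates, so the $\bm H^1(K_+)$-identities underlying \lref{lem: first-order trace} --- in particular \eref{eq: partial_d underline tilde u_d} --- now carry one extra derivative and may be differentiated once more in $y_d$ and then traced onto $K_0$. Throughout I would use that tangential differentiation $\partial_\alpha$ ($\alpha = 1,\dots,d-1$) preserves $K_0 = \{y_d = 0\}$.

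\emph{The first relation} follows immediately: since $(\nabla\bm u^{(m)}\bm n)_\tau = \bm 0$ on $\Gamma$, \lref{lem: properties of new frame}(iii) gives $\partial_d\underline{\tilde{\bm u}}^{(m)\prime} = \bm 0$ on $K_0$; each component $\partial_d\underline{\tilde u}_a^{(m)}$ ($a < d$) lies in $\bm H^2(K_+)$, so its trace on $K_0$ belongs to $H^{3/2}(K_0)$, and applying the tangential derivatives to this vanishing trace yields $\nabla'\partial_d\underline{\tilde{\bm u}}^{(m)\prime} = \bm 0$ on $K_0$.

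\emph{The second relation} is the substantive one. I would start from the transformed divergence-free constraint in the form \eref{eq: partial_d underline tilde u_d} written for $\bm u^{(m)}$, namely $\mathring\Psi_{id}(\partial_d\underline{\tilde u}_d^{(m)})\Phi_{di} = -\sum_{(k,a)\ne(d,d)}\partial_k(\mathring\Psi_{ia}\underline{\tilde u}_a^{(m)})\Phi_{ki}$, which now holds in $\bm H^2(K_+)$. Applying $\partial_d$ produces an identity in $\bm H^1(K_+)$; restricting it to $K_0$ and using $\partial_d\mathring{\bm\Psi} = \bm O$, the contraction $\mathring\Psi_{ia}\Phi_{ki} = \delta_{ka}$ valid on $K_0$ (cf.\ \eref{eq: Psi and Phi}), the first relation just proved, and the first-order trace relations recalled above, one finds that on $K_0$ every term either cancels or is bounded by $C(|\nabla'\tilde{\bm u}^{(m)\prime}| + |\tilde{\bm u}^{(m)\prime}|)$, with the sole exception of the term $\mathring\Psi_{id}(\partial_d^2\underline{\tilde u}_d^{(m)})\Phi_{di} = \partial_d^2\underline{\tilde u}_d^{(m)}$ coming from the left-hand side (here $\mathring\Psi_{id}\Phi_{di} = \delta_{dd} = 1$ on $K_0$). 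Rearranging then gives $|\partial_d^2\underline{\tilde u}^{(m)}| = |\partial_d^2\underline{\tilde u}_d^{(m)}| \le C(|\nabla'\tilde{\bm u}^{(m)\prime}| + |\tilde{\bm u}^{(m)\prime}|)$ a.e.\ on $K_0$; as in \lref{lem: first-order trace}, no assumption on $g|_\Gamma$ enters.

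The main obstacle is the bookkeeping in this last step. Expanding $\partial_d$ of the transformed constraint yields many terms --- products of $\partial_d\bm\Phi$, $\partial_k\mathring{\bm\Psi}$, and first or second derivatives of $\underline{\tilde{\bm u}}^{(m)}$ --- and one must check that every contribution carrying a second normal derivative \emph{other than} $\partial_d^2\underline{\tilde u}_d^{(m)}$ drops out on $K_0$. The decisive cancellations are: $\mathring\Psi_{ia}(\partial_d^2\underline{\tilde u}_a^{(m)})\Phi_{di} = 0$ for $a < d$, since $\mathring\Psi_{ia}\Phi_{di} = \delta_{da} = 0$; $\mathring\Psi_{id}(\partial_k\partial_d\underline{\tilde u}_d^{(m)})\Phi_{ki} = 0$ for $k < d$, since $\mathring\Psi_{id}\Phi_{ki} = \delta_{dk} = 0$; and the mixed second derivatives $\partial_\alpha\partial_d\underline{\tilde u}_a^{(m)}$ with $\alpha, a < d$ vanish by the first relation. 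A secondary point to state carefully is the regularity justification --- that the differentiated identities genuinely hold in $\bm L^2(K_+)$ and admit $L^2(K_0)$-traces --- which is precisely where $\bm u^{(m)} \in \bm H^3(\Omega)$, and nothing weaker, is used; this is the reason the $\bm H^3$-approximation \eref{eq: H3 approximation} was introduced, and its output is what subsequently feeds into the integration by parts and the limit $m \to \infty$ via \eref{eq: um converges to u} in the proof of \lref{lem: normal direction}.
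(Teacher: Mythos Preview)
Your proposal is correct and follows the same approach as the paper: differentiate the first-order trace relation $\partial_d\underline{\tilde{\bm u}}^{(m)\prime}=\bm 0$ tangentially for the first claim, and apply $\partial_d$ to the transformed divergence-free identity \eref{eq: partial_d underline tilde u_d}, trace to $K_0$, and exploit precisely the cancellations you list for the second. One small caveat: your equality $|\partial_d^2\underline{\tilde{\bm u}}^{(m)}|=|\partial_d^2\underline{\tilde u}_d^{(m)}|$ is not actually justified (nothing you wrote forces $\partial_d^2\underline{\tilde u}_a^{(m)}=0$ for $a<d$), but the paper's own proof likewise controls only the $d$-component, via the explicit formula \eref{eq: partial_d^2 underline tilde u_d^m}, which is all that the subsequent application in \sref{sec: pressure on K0} requires.
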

\begin{proof}
	The equality $\nabla' \partial_d \underline{\tilde{\bm u}}^{(m) \prime} = \bm 0$ results from \eref{eq: partial_d underline tilde u} (with $\bm u$ replaced by $\bm u^{(m)}$, which will not be emphasized again) differentiated in tangential directions.
	Next differentiate \eref{eq: partial_d underline tilde u_d} in $y_d$ to get
	\begin{align*}
		\mathring\Psi_{id} (\partial_d^2 \underline{\tilde u}^{(m)}_d) \Phi_{di}
		&= -\mathring\Psi_{id} (\partial_d \underline{\tilde u}^{(m)}_d) \partial_d \Phi_{di}
			-\sum_{(k, a) \neq (d, d)} \mathring\Psi_{ia} (\partial_k \partial_d \underline{\tilde u}^{(m)}_a) \Phi_{ki}
			-\sum_{(k, a) \neq (d, d)} \partial_k \mathring\Psi_{ia} (\partial_d \underline{\tilde u}^{(m)}_a) \Phi_{ki} \\
		&\qquad -\sum_{(k, a) \neq (d, d)} \mathring\Psi_{ia} (\partial_k \underline{\tilde u}^{(m)}_a) \partial_d \Phi_{ki}
			- \sum_{(k, a) \neq (d, d)} (\partial_k \mathring\Psi_{ia}) \underline{\tilde u}^{(m)}_a (\partial_d \Phi_{ki}) \qquad 
			\quad \text{in} \quad \bm H^1(K_+).
	\end{align*}
	Taking the trace to $y_d = 0$ and utilizing the relations
	$\Psi_{ia} \Phi_{ki} = \delta_{ka}$, $\partial_d \underline{\tilde{\bm u}}^{(m) \prime} = \bm 0$, $\underline{\tilde u}_d^{(m)} = 0$, $\underline{\tilde{\bm u}}^{(m) \prime} = \tilde{\bm u}^{(m) \prime}$, and \eref{eq: Psi and Phi}, we deduce that
	\begin{equation} \label{eq: partial_d^2 underline tilde u_d^m}
		\partial_d^2 \underline{\tilde u}^{(m)}_d = -\Psi_{id} (\partial_d \underline{\tilde u}^{(m)}_d) \partial_d \Phi_{di}
			- \sum_{k=1}^{d-1} \partial_k \Psi_{id} (\partial_d \underline{\tilde u}^{(m)}_d) \Phi_{ki}
			- \sum_{k, a = 1}^{d-1} \Psi_{ia} (\partial_k \tilde u^{(m)}_a) \partial_d \Phi_{ki}
			- \sum_{(k, a) \neq (d, d)} (\partial_k \Psi_{ia}) \underline{\tilde u}^{(m)}_a (\partial_d \Phi_{ki}).
	\end{equation}
	This combined with \eref{eq: partial_d underline tilde u} implies the desired inequality in \eref{eq: normal derivatives are bounded by tangential ones}.
\end{proof}

\subsubsection{Estimates for $\nabla \partial_d \tilde{\bm u}$ uniform in $\epsilon$}
We now multiply \eref{eq: strong form in y}$_1$ by $-\partial_d (\theta^2 \partial_d \tilde u_i)$, integrate over $K_+$, take the summation for $i = 1, \dots, d$.
The resulting equation reads
\begin{equation} \label{eq: starting pt of uniform normal estimates}
	I_1 + I_2 + I_3 = I_4,
\end{equation}
where
\begin{alignat*}{2}
	I_1 &= \int_{K_+} \partial_l \big( \partial_k \tilde u_i \Phi_{kj} \Phi_{lj} \big) \, \partial_d (\theta^2 \partial_d \tilde u_i) \, d\bm y, & \qquad
	I_2 &= \int_{K_+} \partial_l \bigg[ \tilde g \frac{ \partial_k \tilde u_i \Phi_{kj} }{ ( |(\nabla\tilde{\bm u})\bm\Phi|^2 + \epsilon^2 )^{1/2} } \Phi_{lj} \bigg] \, \partial_d (\theta^2 \partial_d \tilde u_i) \, d\bm y, \\
	I_3 &= \int_{K_+} \partial_k (\tilde p \, \Phi_{ki}) \big[ - \partial_d (\theta^2 \partial_d\tilde u_i) \big] \, d\bm y, & \qquad 
	I_4 & = \int_{K_+} \tilde F_i \big[ - \partial_d (\theta^2 \partial_d\tilde u_i) \big] \, d\bm y.
\end{alignat*}
Let us show that the places of $\partial_l$ and $\partial_d$ in $I_1$ and $I_2$ above can be interchanged up to lower-order terms.
We remark that the assumption $g \in H^1_0(\Omega)$, in particular $g = 0$ on $\Gamma$, is essentially utilized here.
\begin{lem}
	If $g \in H^1_0(\Omega)$, we have
	\begin{align*}
		I_1 &= \int_{K_+} \partial_d \big( \partial_k \tilde u_i \Phi_{kj} \Phi_{lj} \big) \, \partial_l (\theta^2 \partial_d \tilde u_i) \, d\bm y + L_1 =: I_{11} + L_1, \qquad
		|L_1| \le C\|\tilde{\bm u}\|_{\bm H^1(K_0 \cap (-r/2, r/2)^d)}^2, \\[3mm]
		I_2 &= \int_{K_+} \partial_d \bigg[ \tilde g \frac{ \partial_k \tilde u_i \Phi_{kj} }{ ( |(\nabla\tilde{\bm u})\bm\Phi|^2 + \epsilon^2 )^{1/2} } \Phi_{lj} \bigg] \, \partial_l (\theta^2 \partial_d \tilde u_i) \, d\bm y.
	\end{align*}
\end{lem}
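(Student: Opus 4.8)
\emph{Proof plan.} The two identities are in essence ``commute $\partial_l$ and $\partial_d$'' formulas, obtained by integrating by parts twice; the point for $I_2$ is that the only boundary term produced carries the factor $\tilde g$, which vanishes on $K_0$ because $g\in H^1_0(\Omega)$. First I would note that for $l=d$ the two integrands already coincide, so only the tangential indices $l\in\{1,\dots,d-1\}$ matter, and that every integral in sight is finite since $\tilde{\bm u}\in\bm H^2(K_+)$ and $\tilde p\in H^1(K_+)$ are already available (their $\epsilon$-dependence being irrelevant here). To legitimately integrate by parts in $y_l$ and $y_d$ and to commute $\partial_l\partial_d=\partial_d\partial_l$ I would replace $\tilde{\bm u}$ by the $\bm H^3$-approximation $\tilde{\bm u}^{(m)}$ from \eref{eq: H3 approximation}---so that $\theta^2\partial_d\tilde u^{(m)}_i\in\bm H^2$ and the products with $\tilde g\in H^1$ all lie in $\bm H^1$---carry out everything there, and pass to the limit using $\bm u^{(m)}\to\bm u$ in $\bm H^2(\Omega)$ from \eref{eq: um converges to u}, which forces the traces to converge in $\bm H^{3/2}(K_0)\subset\bm H^1(K_0)$; for the singular term one additionally invokes the uniform pointwise bound $|\partial_k\tilde u^{(m)}_i\Phi_{kj}|(|(\nabla\tilde{\bm u}^{(m)})\bm\Phi|^2+\epsilon^2)^{-1/2}\le1$ and the Lipschitz dependence (for fixed $\epsilon>0$) of the singular factor on $\nabla\tilde{\bm u}^{(m)}$, together with a dominated-convergence argument.

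For the integration by parts, write $G_{ij}:=\partial_k\tilde u^{(m)}_i\Phi_{kj}$ in the case of $I_1$ and $H_{ij}:=\tilde g\,\partial_k\tilde u^{(m)}_i\Phi_{kj}(|(\nabla\tilde{\bm u}^{(m)})\bm\Phi|^2+\epsilon^2)^{-1/2}$ in the case of $I_2$. For each tangential $l$, integrate by parts in $y_l$ (no boundary term: $\theta$ is supported in $(-r/2,r/2)^d$, compact in the $y_l$-variable), commute $\partial_l\partial_d$, and then integrate by parts in $y_d$ (the only boundary is $K_0$, since $\theta$ vanishes near $y_d=r$, with outer normal $-\bm e_d$). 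Summing over $l$ and adding back the trivial $l=d$ term yields $I_1^{(m)}=I_{11}^{(m)}+L_1^{(m)}$ and $I_2^{(m)}=I_{21}^{(m)}+L_2^{(m)}$ with
\begin{equation*}
	L_1^{(m)}=\sum_{l=1}^{d-1}\int_{K_0}G_{ij}\Phi_{lj}\,\partial_l(\theta^2\partial_d\tilde u^{(m)}_i)\,d\bm y',\qquad L_2^{(m)}=\sum_{l=1}^{d-1}\int_{K_0}H_{ij}\Phi_{lj}\,\partial_l(\theta^2\partial_d\tilde u^{(m)}_i)\,d\bm y'.
\end{equation*}
Since $g\in H^1_0(\Omega)$ we have $\tilde g=0$ on $K_0$, hence $H_{ij}=0$ there, hence $L_2^{(m)}=0$; letting $m\to\infty$ gives $I_2=I_{21}$. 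This is the unique place where $g|_\Gamma=0$ enters.

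It remains to estimate $L_1^{(m)}$. A short computation from \eref{eq: Psi and Phi} shows that on $K_0$ the matrix $\bm\Psi$, hence $\bm\Psi^2$ and $\bm\Phi\bm\Phi^\top=(\bm\Psi^2)^{-1}$, is block-diagonal with respect to $\mathbb R^d=\mathbb R^{d-1}\times\mathbb R$; since $G_{ij}\Phi_{lj}=(\bm\Phi\bm\Phi^\top)_{kl}\partial_k\tilde u^{(m)}_i$ on $K_0$ and $l<d$, this forces $k$ to be tangential as well. Expanding $\partial_l(\theta^2\partial_d\tilde u^{(m)}_i)$ and inserting the slip-boundary trace relations from \lref{lem: first-order trace}---namely $\partial_d\underline{\tilde{\bm u}}^{(m)\prime}=\bm0$, so $(\partial_d\tilde u^{(m)}_i)|_{K_0}=\mathring\Psi_{id}\,F$ with $F:=\partial_d\underline{\tilde u}^{(m)}_d$ a first-order expression in $(\nabla'\tilde{\bm u}^{(m)\prime},\tilde{\bm u}^{(m)\prime})$ obeying $|F|\le C(|\nabla'\tilde{\bm u}^{(m)}|+|\tilde{\bm u}^{(m)}|)$---every contribution is pointwise $\le C(|\nabla'\tilde{\bm u}^{(m)}|+|\tilde{\bm u}^{(m)}|)^2$ on $K_0$, except the single term $\sum_{k,l=1}^{d-1}\int_{K_0}(\bm\Phi\bm\Phi^\top)_{kl}\theta^2\,(\partial_l F)\,\bigl[\sum_i\mathring\Psi_{id}\partial_k\tilde u^{(m)}_i\bigr]\,d\bm y'$. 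For this one I would integrate by parts once more in $y_l$ along $K_0$ and use that, because $\sum_i\mathring\Psi_{id}\tilde u^{(m)}_i=(|\nabla'\rho|^2+1)\,\underline{\tilde u}^{(m)}_d$ vanishes on $K_0$ together with all its tangential derivatives (as $\underline{\tilde u}^{(m)}_d=0$ on $K_0$ by $\bm u^{(m)}\cdot\bm n=0$), both $\sum_i\mathring\Psi_{id}\partial_k\tilde u^{(m)}_i$ and $\sum_i\mathring\Psi_{id}\partial_l\partial_k\tilde u^{(m)}_i$ collapse on $K_0$ to first-order expressions in $\tilde{\bm u}^{(m)}$; hence after the integration by parts the integrand is again $\le C(|\nabla'\tilde{\bm u}^{(m)}|+|\tilde{\bm u}^{(m)}|)^2$. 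Thus $|L_1^{(m)}|\le C\|\tilde{\bm u}^{(m)}\|_{\bm H^1(K_0\cap(-r/2,r/2)^d)}^2$, and passing $m\to\infty$ completes the proof of $I_1=I_{11}+L_1$ with the stated bound.

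The main obstacle is precisely this last step: tracking all the boundary terms and verifying that every second-order tangential derivative of $\tilde{\bm u}$ appearing on $K_0$ is annihilated. That cancellation rests on three features of the perfect-slip setting---the block-diagonal form of $\bm\Phi\bm\Phi^\top$ on $K_0$, the identity $\sum_i\mathring\Psi_{id}\tilde u_i=(|\nabla'\rho|^2+1)\underline{\tilde u}_d$ combined with $\bm u\cdot\bm n=0$, and the first-order trace formula for $\partial_d\underline{\tilde u}_d$---and it must be carried out with some care. A secondary, purely technical point is making the $m\to\infty$ passage rigorous for the singular-diffusion contribution, for which one uses that $\partial_l\bigl[\tilde g\,\partial_k\tilde u_i\Phi_{kj}\Phi_{lj}(|(\nabla\tilde{\bm u})\bm\Phi|^2+\epsilon^2)^{-1/2}\bigr]$ lies in $\bm L^2(K_+)$ by the strong form \eref{eq: strong form in y} and isolates the bounded factors there.
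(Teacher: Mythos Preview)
Your proposal is correct and follows essentially the same route as the paper: approximate by $\tilde{\bm u}^{(m)}\in\bm H^3$, swap $\partial_l$ and $\partial_d$ via two integrations by parts, kill the $I_2$ boundary contribution using $\tilde g|_{K_0}=0$, and reduce the $I_1$ boundary term to first-order traces via the block-diagonal structure $\Phi_{dj}\Phi_{lj}|_{K_0}=0$ for $l<d$, the slip relation $\partial_d\underline{\tilde{\bm u}}^{(m)\prime}|_{K_0}=\bm 0$, and a final tangential integration by parts on $K_0$ of the remaining term carrying $\partial_l\partial_d\underline{\tilde u}_d^{(m)}$. The only cosmetic differences are that the paper approximates $\tilde{\bm u}^{(m)}$ in the second factor only (keeping $\tilde{\bm u}$ in the first), and organizes the boundary analysis by passing to the frame $\underline{\tilde{\bm u}}$ and splitting over the index $b$ (invoking \lref{lem: second-order trace} for $b<d$), whereas you stay in the original frame and exploit the scalar identity $\sum_i\mathring\Psi_{id}\tilde u_i^{(m)}=(|\nabla'\rho|^2+1)\underline{\tilde u}_d^{(m)}=0$ on $K_0$ together with its tangential derivatives; both computations land on the same residual integral \eref{eq: this breaks down for singular diffusion} and dispatch it the same way.
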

\begin{proof}
	Let $\bm u^{(m)}$ be as in \eref{eq: H3 approximation} and consider $I_1$.
	We find from integration by parts that
	\begin{align*}
		&\int_{K_+} \partial_l \big( \partial_k \tilde u_i \Phi_{kj} \Phi_{lj} \big) \, \partial_d (\theta^2 \partial_d \tilde u_i^{(m)}) \, d\bm y \\
			= &\int_{K_+} \partial_d \big( \partial_k \tilde u_i \Phi_{kj} \Phi_{lj} \big) \, \partial_l (\theta^2 \partial_d \tilde u_i^{(m)}) \, d\bm y \\
			&\qquad + \int_{K_0} \underline n_l ( \partial_k \tilde u_i \Phi_{kj} \Phi_{lj} ) \, \partial_d (\theta^2 \partial_d \tilde u_i^{(m)}) \, d\bm y'
			- \int_{K_0} \underline n_d ( \partial_k \tilde u_i \Phi_{kj} \Phi_{lj} ) \, \partial_l (\theta^2 \partial_d \tilde u_i^{(m)}) \, d\bm y'
			=: I_{11}^{(m)} + I_{12}^{(m)} + I_{13}^{(m)},
	\end{align*}
	where we recall $\underline{\bm n} = (0, \dots, 0, -1)^\top$.
	Since $I_{11}^{(m)}$ converges to $I_{11}$ as $m \to \infty$ (recall \eref{eq: um converges to u}), it suffices to show that $I_{12}^{(m)} + I_{13}^{(m)}$ gives rise to lower-order terms.
	We see that $I_{12}^{(m)}$ is reduced to the term with $l = d$, which is canceled with $I_{13}^{(m)}$ with $l = d$.
	Therefore, it remains to consider each term of $I_{13}^{(m)}$ for a fixed $l < d$.
	Using the new frame we have
	\begin{equation*}
		I_{13}^{(m)} = \int_{K_0} \partial_k (\mathring\Psi_{ia} \underline{\tilde u}_a) \Phi_{kj} \, \partial_l \big( \theta^2 \partial_d (\mathring\Psi_{ib} \underline{\tilde u}_b^{(m)}) \big) \Phi_{lj} \, d\bm y'.
	\end{equation*}
	\begin{itemize}
		\item The terms for $k = d$ vanish because $\Phi_{dj} \Phi_{lj} = (|\nabla'\rho|^2+1)^{-1} \Psi_{dj} \Phi_{jl} = 0$ by \eref{eq: Psi and Phi}.
		\item The terms for $b < d$ vanish by \lref{lem: first-order trace} (applicable to $\tilde{\bm u}^{(m)}$ as well) and \lref{lem: second-order trace}.
		\item The terms with $k < d$ and $b = d$ are reduced, in view of \eref{eq: partial_d underline tilde u}, to
		\begin{equation*}
			\int_{K_0} \theta^2 \partial_k (\Psi_{ia} \underline{\tilde u}_a) \Phi_{kj} \, \Psi_{id} (\partial_l \partial_d \underline{\tilde u}_d^{(m)})  \Phi_{lj} \, d\bm y' + L_{131}^{(m)}, \quad |L_{131}^{(m)}| \le C\|\tilde{\bm u}\|_{\bm H^1(K_0 \cap (-r/2, r/2)^d)}^2.
		\end{equation*}
		If $\partial_k$ does not apply to $\Psi_{ia}$, then the relation $\Psi_{ia} \Psi_{id} = (|\nabla'\rho|^2 + 1) \delta_{ad}$ implies $a = d$, in which case $\underline{\tilde u}_a = 0$.
		If $\partial_k$ applies to $\Psi_{ia}$ and $a \neq d$, then $\partial_k\Psi_{ia}$ becomes non-zero only when $i = d$.
		Therefore, we are left with
		\begin{equation} \label{eq: this breaks down for singular diffusion}
			-\sum_{k, l, a = 1}^{d-1} \int_{K_0} \theta^2 (\partial_k \Psi_{da}) \Phi_{kj} \Phi_{lj} \cdot \underline{\tilde u}_a (\partial_l \partial_d \underline{\tilde u}_d^{(m)})  \, d\bm y',
		\end{equation}
		which can be bounded (use \eref{eq: partial_d underline tilde u} again) by $C\|\tilde{\bm u}\|_{\bm H^1(K_0 \cap (-r/2, r/2)^d)}^2$ after integration by parts for $\partial_l$.
	\end{itemize}
	
	Treatment of $I_2$ is similar and actually easier; in fact no boundary terms on $K_0$ appear in the integration by parts because $g = 0$ on $\Gamma$ by assumption.
\end{proof}
\begin{rem} \label{rem: H^1_0 is inevitable}
	(i) By a trace inequality, $|L_1|$ is further bounded by the square of the right-hand side of \eref{eq2: proof of tangential direction}.
	
	(ii) If $g \neq 0$ on the boundary, a similar argument as above applied to $I_2$ breaks down at a counter part of \eref{eq: this breaks down for singular diffusion}, where $\underline{\tilde u}_a$ is replaced by $\tilde g \underline{\tilde u}_a / \sqrt{|\nabla(\mathring{\bm\Psi} \underline{\tilde{\bm u}}) \bm\Phi|^2 + \epsilon^2}$ in the integrand.
	Note also that this term disappears if the boundary is flat, i.e., $\rho$ is a linear function.
\end{rem}

We split $I_{11}$ into a principal part and lower-order terms $L_{11}$ to derive
\begin{equation*}
	I_{11} = \int_{K_+} \theta^2 | \nabla (\partial_d \tilde{\bm u}) \bm\Phi|^2 \, d\bm y + L_{11}
		\ge C_* \|\theta \nabla \partial_d \tilde{\bm u}\|_{\bm L^2(K_+)}^2 + L_{11},
\end{equation*}
where we distinguish $C_*$ from other generic constants for convenience and
\begin{equation*}
	L_{11} \le C \|\nabla \tilde{\bm u}\|_{\bm L^2(K_+)} \big( \|\nabla \tilde{\bm u}\|_{\bm L^2(K_+)} + \|\theta \nabla \partial_d \tilde{\bm u}\|_{\bm L^2(K_+)} \big).
\end{equation*}

Next observe that $I_2$ is rewritten as
\begin{align*}
	I_2 &= \int_{K_+} \theta^2 \tilde g \, \partial_d \bigg[ \frac{ \partial_k \tilde u_i }{ ( |(\nabla\tilde{\bm u})\bm\Phi|^2 + \epsilon^2 )^{1/2} } \bigg] \Phi_{kj} \,  (\partial_l \partial_d \tilde u_i) \Phi_{lj} \, d\bm y
		\! + \! \int_{K_+} \tilde g \partial_d \bigg[ \frac{ \partial_k \tilde u_i }{ ( |(\nabla\tilde{\bm u})\bm\Phi|^2 + \epsilon^2 )^{1/2} } \bigg] \Phi_{kj} \, \partial_l (\theta^2) \partial_d \tilde u_i \Phi_{lj} \, d\bm y + L_2 \\
		&=: I_{21} + I_{22} + L_2,
\end{align*}
where the lower-order terms $L_2$ can be bounded by
\begin{align*}
	|L_2| &\le C \|\tilde g\|_{H^1(K_+)} \bigg\| \frac{\nabla \tilde{\bm u}}{( |(\nabla\tilde{\bm u}) \bm\Phi|^2 + \epsilon^2 )^{1/2}} \bigg\|_{\bm L^\infty(K_+)} ( \|\nabla \tilde{\bm u}\|_{\bm L^2(K_+)} + \|\theta \nabla \partial_d \tilde{\bm u}\|_{\bm L^2(K_+)} ) \\
		&\le C \|\tilde g\|_{H^1(K_+)} ( \|\nabla \tilde{\bm u}\|_{\bm L^2(K_+)} + \|\theta \nabla \partial_d \tilde{\bm u}\|_{\bm L^2(K_+)} ).
\end{align*}
Since (cf.\ \eref{eq: derivative of singular diffusion})
\begin{align*}
	\partial_d \bigg[ \frac{ \partial_k \tilde u_i }{ ( |(\nabla\tilde{\bm u})\bm\Phi|^2 + \epsilon^2 )^{1/2} } \bigg] \Phi_{kj}
	&= \frac{ (\partial_k \partial_d \tilde u_i) \Phi_{kj} }{ ( |(\nabla\tilde{\bm u})\bm\Phi|^2 + \epsilon^2 )^{1/2} }
		- \partial_k \tilde u_i \Phi_{kj} \frac{ ((\nabla\tilde{\bm u})\bm\Phi) : ((\nabla \partial_d\tilde{\bm u}) \bm\Phi) }{ ( |(\nabla\tilde{\bm u}) \bm\Phi|^2 + \epsilon^2 )^{3/2} } \\
		&\quad - \underbrace{ \partial_k \tilde u_i \Phi_{kj} \frac{ ((\nabla\tilde{\bm u})\bm\Phi) : \big[ (\nabla \tilde{\bm u}) \partial_d \bm\Phi \big] }{ ( |(\nabla\tilde{\bm u})\bm\Phi|^2 + \epsilon^2 )^{3/2} } }_{ |\cdot| \le C } \qquad (i, j = 1, \dots, d),
\end{align*}
we obtain, by the Cauchy--Schwarz inequality,
\begin{equation*}
	I_{21} = \int_{K_+} \theta^2 \tilde g
		\frac{ |\nabla (\partial_d \tilde{\bm u}) \bm\Phi|^2 \, 
		(|(\nabla\tilde{\bm u})\bm\Phi|^2 + \epsilon^2)
		- \big| ((\nabla\tilde{\bm u})\bm\Phi) : (\nabla (\partial_d\tilde{\bm u}) \bm\Phi) \big|^2 }
		{ ( |(\nabla\tilde{\bm u})\bm\Phi|^2 + \epsilon^2 )^{3/2} } d\bm y + L_{21}
	\ge 0 + L_{21},
\end{equation*}
where $|L_{21}| \le C \|\tilde g\|_{L^2(K_+)} \|\theta \nabla \partial_d \tilde{\bm u}\|_{\bm L^2(K_+)}$.
Similarly,
\begin{align*}
	I_{22} &= \int_{K_+} \tilde g \frac{ (\nabla \partial_d\tilde{\bm u})\bm\Phi : \big( \bm\Phi^\top\nabla(\theta^2) \otimes \partial_d\tilde{\bm u} \big) }{ (|(\nabla\tilde{\bm u})\bm\Phi|^2 + \epsilon^2 )^{1/2} } d\bm y \\
		&\hspace{2cm} - \int_{K_+} \tilde g \frac{ \big[ ((\nabla\tilde{\bm u})\bm\Phi) : ( \bm\Phi^\top\nabla(\theta^2) \otimes \partial_d\tilde{\bm u} ) \big] \, \big[ ((\nabla\tilde{\bm u})\bm\Phi) : ( (\nabla \partial_d\tilde{\bm u}) \bm\Phi) \big] }
		{ (|(\nabla\tilde{\bm u})\bm\Phi|^2 + \epsilon^2 )^{3/2} } d\bm y + L_{22},
\end{align*}
where $|L_{22}| \le C \|\tilde g\|_{L^2(K_+)} \|\nabla \tilde{\bm u}\|_{\bm L^2(K_+)}$.
Consequently,
\begin{equation*}
	|I_2| \le C \|\tilde g\|_{H^1(K_+)} ( \|\nabla \tilde{\bm u}\|_{\bm L^2(K_+)} + \|\theta \nabla \partial_d \tilde{\bm u}\|_{\bm L^2(K_+)} ).
\end{equation*}

We now focus on $I_3$.
It is represented as
\begin{equation*}
	I_3 = \lim_{m \to \infty} I_3^{(m)}, \qquad I_3^{(m)} := \int_{K_+} \partial_k (\tilde p \, \Phi_{ki}) \big[ - \partial_d (\theta^2 \partial_d\tilde u_i^{(m)}) \big] \, d\bm y.
\end{equation*}
Let us admit for the moment that the following key estimate holds:
\begin{equation} \label{eq: key estimate for pressure term}
\begin{aligned}
	|I_3^{(m)}| &\le C \big( \|\tilde{\bm f}\|_{\bm L^2(K_+)} + \|\tilde g\|_{H^1(K_+)} + \|\tilde{\bm u}\|_{\bm H^1(K_+)} + \|\tilde p\|_{L^2(K_+)} ) \\
		&\hspace{2cm} \times (\|\tilde{\bm u}^{(m)}\|_{\bm H^1(K_+)} + \|\theta \nabla \partial_d \tilde{\bm u}^{(m)}\|_{\bm L^2(K_+)} + \|\nabla' \tilde{\bm u}^{(m)}\|_{\bm H^1(K_+ \cap (-r/2, r/2)^d)}),
\end{aligned}
\end{equation}
which will be shown in Subsection \ref{sec: pressure on K0}.
Taking the limit $m \to \infty$, we find from \eref{eq2: proof of tangential direction} and the arithmetic-geometric mean inequality that
\begin{equation*}
	|I_3| \le C ( \|\tilde{\bm f}\|_{\bm L^2(K_+)}^2 + \|\tilde g\|_{H^1(K_+)}^2 + \|\tilde{\bm u}\|_{\bm H^1(K_+)}^2 + \|\tilde p\|_{L^2(K_+)}^2 )
		+ \frac{C_*}4 \|\theta \nabla\partial_d \tilde{\bm u}\|_{\bm L^2(K_+)}^2.
\end{equation*}
Finally, it is clear that $|I_4|$ can be bounded in a similar manner.

Combining the estimates for $I_1$ to $I_4$ with \eref{eq: starting pt of uniform normal estimates}, we conclude
\begin{equation*}
	\|\theta \nabla \partial_d \tilde{\bm u}\|_{\bm L^2(K_+)} \le C ( \|\tilde{\bm f}\|_{\bm L^2(K_+)} + \|\tilde g\|_{H^1(K_+)} + \|\tilde{\bm u}\|_{\bm H^1(K_+)} + \|\tilde p\|_{L^2(K_+)} ),
\end{equation*}
which completes the proof of \lref{lem: normal direction} except \eref{eq: key estimate for pressure term}.

\subsubsection{Proof of \eref{eq: key estimate for pressure term}} \label{sec: pressure on K0}
Integration by parts gives
\begin{align*}
	I_3^{(m)} &= \int_{K_+} \tilde p \Phi_{ki} \, \partial_k \partial_d (\theta^2 \partial_d\tilde u_i^{(m)}) \, d\bm y
		- \int_{K_0} \underline n_k \tilde p \, \Phi_{ki} \, \partial_d (\theta^2 \partial_d\tilde u_i^{(m)}) \, d\bm y'
		=: I_{31}^{(m)} + I_{32}^{(m)}.
\end{align*}
If we divide $I_{31}^{(m)}$ into a principal part and other lower-order terms represented by $L_{31}^{(m)}$, we get
\begin{equation*}
	I_{31}^{(m)} = \int_{K_+} \theta^2 \tilde p \, \partial_d^2 (\underbrace{ \partial_k \tilde u_i^{(m)} \Phi_{ki} }_{= 0}) \, d\bm y + L_{31}^{(m)}, \qquad
		|L_{31}^{(m)}| \le C \|\tilde p\|_{L^2(K_+)} \big( \|\nabla \tilde{\bm u}^{(m)}\|_{\bm L^2(K_+)} + \|\theta \nabla \partial_d \tilde{\bm u}^{(m)}\|_{\bm L^2(K_+)} \big),
\end{equation*}
which admits a bound as claimed in \eref{eq: key estimate for pressure term}.
\begin{rem}
	We see that $\nabla^2 \bm\Phi$, i.e., fourth-order derivatives of $\rho$ appear in $L_{31}^{(m)}$, which accounts for why we need $C^{3,1}$-regularity of $\Omega$.
\end{rem}

Thus it remains to estimate $I_{32}^{(m)}$, which is represented in the transformed frame as (note that $k$ must equal $d$, $\partial_d \mathring{\bm\Psi} = \bm O$, and $\mathring{\bm\Psi} = \bm\Psi$ on $K_0$)
\begin{equation*}
	I_{32}^{(m)} = \int_{K_0} \tilde p \underbrace{\Phi_{di} \, \Psi_{ib}}_{=\delta_{db}} \partial_d (\theta^2 \, \partial_d \underline{\tilde u}_b^{(m)}) \, d\bm y'
		= \int_{K_0} \tilde p \, \partial_d (\theta^2 \, \partial_d \underline{\tilde u}_d^{(m)}) \, d\bm y'.
\end{equation*}
To address this term, let us estimate the pressure trace $\tilde p|_{K_0}$ in $H^{-1/2}(K_0) + L^2(K_0)$ as follows.
\begin{lem} \label{lem: trace of p}
	For arbitrary $\eta \in H^{1/2}(K_0)$ with $\operatorname{supp}\eta \subset K_0 \cap (-r/2, -r/2)^d$ we have
	\begin{align*}
		\left| \int_{K_0} \tilde p \eta \, d\bm y' \right| &\le C \big( \|\tilde{\bm F}\|_{\bm L^2(K_+)} + \|\tilde g\|_{L^2(K_+)} + \|\nabla \tilde{\bm u}\|_{\bm L^2(K_+)} + \|\tilde p\|_{L^2(K_+)}) \|\eta\|_{H^{1/2}(K_0)} \\
		&\hspace{2cm} + C(\|\tilde{\bm u}\|_{\bm H^1(K_0 \cap (-r/2, r/2)^d)} + \|\tilde g\|_{L^2(K_0)} \big) \|\eta\|_{L^2(K_0)}.
	\end{align*}
\end{lem}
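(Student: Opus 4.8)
\textit{Plan.}
The plan is to recover the pressure trace by testing the strong form \eref{eq: strong form in y}---which holds a.e.\ in $K_+$ thanks to the already-established regularity $\bm u\in\bm H^2(\Omega)$, $p\in H^1(\Omega)$ (equivalently $\tilde{\bm u}\in\bm H^2(K_+)$, $\tilde p\in H^1(K_+)$), even though the bounds there depend on $\epsilon$---against a field whose last transformed component realizes $\eta$ on $K_0$. Concretely, given $\eta$ I would first build $\underline{\tilde{\bm w}}\in\bm H^1(K_+)$ with $\operatorname{supp}\underline{\tilde{\bm w}}\subset K$, $\underline{\tilde{\bm w}}'=\bm 0$ and $\underline{\tilde w}_d=-\eta$ on $K_0$, and
\begin{equation*}
	\|\underline{\tilde{\bm w}}\|_{\bm H^1(K_+)}\le C\|\eta\|_{H^{1/2}(K_0)},\qquad \|\underline{\tilde{\bm w}}\|_{\bm L^2(K_0)}\le C\|\eta\|_{L^2(K_0)};
\end{equation*}
for instance $\underline{\tilde{\bm w}}=-\zeta(\bm y')\,\chi(y_d)\,(E\eta)\,\bm e_d$, where $E:H^{1/2}(K_0)\to H^1(K_+)$ is a bounded extension operator and $\zeta,\chi$ are cut-offs (with $\zeta\equiv 1$ on $\operatorname{supp}\eta$) inserted so that the support lands in $K$. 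Setting $\tilde{\bm w}:=\mathring{\bm\Psi}\underline{\tilde{\bm w}}$, which satisfies the same two bounds up to a factor $C(\Omega)$, I would multiply \eref{eq: strong form in y}$_1$ by $\tilde w_i$, sum over $i$, integrate over $K_+$, and integrate by parts. The integration by parts of the singular-diffusion flux up to $K_0$ is legitimate: since $g\in H^1_0(\Omega)$ and $\tilde{\bm u}\in\bm H^2$, the tensor $\tilde g\,\partial_k\tilde u_i\Phi_{kj}\,(|(\nabla\tilde{\bm u})\bm\Phi|^2+\epsilon^2)^{-1/2}\,\Phi_{lj}$ belongs to $\bm W^{1,1}(K_+)$ (in fact to $\bm W^{1,3/2}(K_+)$ in $d=3$) and hence has a trace on $K_0$, while its $\bm y$-divergence is in $\bm L^2(K_+)$ by the strong form.

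Since $\Phi_{di}\tilde w_i=\Phi_{di}\mathring\Psi_{ib}\underline{\tilde w}_b=\underline{\tilde w}_d=-\eta$ on $K_0$ (cf.\ \eref{eq: Psi and Phi}), the pressure term $\int_{K_+}\partial_k(\tilde p\,\Phi_{ki})\tilde w_i\,d\bm y$ produces, besides $-\int_{K_+}\tilde p\,\Phi_{ki}\partial_k\tilde w_i\,d\bm y$, exactly the boundary integral $\int_{K_0}\tilde p\,\eta\,d\bm y'$, and rearranging the resulting identity gives
\begin{equation*}
\begin{aligned}
	\int_{K_0}\tilde p\,\eta\,d\bm y'
	&= \int_{K_+}\tilde F_i\tilde w_i\,d\bm y
	- \int_{K_+}\Bigl(\partial_k\tilde u_i\Phi_{kj} + \tilde g\,\frac{\partial_k\tilde u_i\Phi_{kj}}{(|(\nabla\tilde{\bm u})\bm\Phi|^2+\epsilon^2)^{1/2}}\Bigr)\Phi_{lj}\,\partial_l\tilde w_i\,d\bm y \\
	&\quad + \int_{K_+}\tilde p\,\Phi_{ki}\,\partial_k\tilde w_i\,d\bm y
	- \int_{K_0}\Bigl(\partial_k\tilde u_i\Phi_{kj} + \tilde g\,\frac{\partial_k\tilde u_i\Phi_{kj}}{(|(\nabla\tilde{\bm u})\bm\Phi|^2+\epsilon^2)^{1/2}}\Bigr)\Phi_{dj}\,\tilde w_i\,d\bm y'.
\end{aligned}
\end{equation*}
Using $\|\bm\Phi\|_{\bm L^\infty(K)}\le C$ and the pointwise inequality $|\partial_k\tilde u_i\Phi_{kj}|\,(|(\nabla\tilde{\bm u})\bm\Phi|^2+\epsilon^2)^{-1/2}\le1$, each of the three bulk integrals is bounded by $C(\|\tilde{\bm F}\|_{\bm L^2(K_+)}+\|\tilde g\|_{L^2(K_+)}+\|\nabla\tilde{\bm u}\|_{\bm L^2(K_+)}+\|\tilde p\|_{L^2(K_+)})\,\|\tilde{\bm w}\|_{\bm H^1(K_+)}$, hence by that same quantity times $\|\eta\|_{H^{1/2}(K_0)}$.

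It remains to control the two boundary integrals over $K_0$. For the first, passing to the transformed frame and using \lref{lem: properties of new frame}(i) together with $\underline{\tilde{\bm w}}'=\bm 0$ on $K_0$, a direct computation from \eref{eq: Psi and Phi} and the perfect slip condition $(\nabla\bm u\,\bm n)_\tau=\bm 0$ gives $\partial_k\tilde u_i\Phi_{kj}\Phi_{dj}\tilde w_i=-\eta\,(\nabla\bm u\,\bm n)\cdot\bm n=-\eta\,\partial_d\underline{\tilde u}_d$ on $K_0$; and by \lref{lem: first-order trace}, precisely \eref{eq2: partial_d underline tilde u_d}, $\partial_d\underline{\tilde u}_d=-\nabla^{\prime \top}\tilde{\bm u}'-(\nabla'\rho)^\top(\nabla^{\prime 2}\rho)\,\tilde{\bm u}'$ there, a quantity bounded in $L^2(K_0)$ by $C\|\tilde{\bm u}\|_{\bm H^1(K_0\cap(-r/2,r/2)^d)}$. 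Hence this term is $\le C\|\tilde{\bm u}\|_{\bm H^1(K_0\cap(-r/2,r/2)^d)}\|\eta\|_{L^2(K_0)}$. For the second, since the trace on $K_0$ of $\partial_k\tilde u_i\Phi_{kj}\,(|(\nabla\tilde{\bm u})\bm\Phi|^2+\epsilon^2)^{-1/2}$ still has magnitude $\le1$, $|\bm\Phi|\le C$, and $|\tilde{\bm w}|\le C|\eta|$ on $K_0$, this integral is bounded by $C\|\tilde g\|_{L^2(K_0)}\|\eta\|_{L^2(K_0)}$---which in fact vanishes because $g=0$ on $\Gamma$, but is kept in the statement for later use. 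Collecting the above estimates proves the lemma.

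The step I expect to demand the most care is the reduction of the first boundary integral: once $\underline{\tilde{\bm w}}'|_{K_0}=\bm 0$ is imposed, the only ``second-order-in-disguise'' quantity remaining is the normal--normal component of $\nabla\bm u$ on $\Gamma$, and one must exploit \emph{both} $\operatorname{div}\bm u=0$ and $(\nabla\bm u\,\bm n)_\tau=\bm 0$ to rewrite it through tangential first derivatives of $\bm u$ on $K_0$ alone. This is exactly why $\eta$ appears there in the weaker norm $\|\eta\|_{L^2(K_0)}$ rather than in $\|\eta\|_{H^{1/2}(K_0)}$ (which would be the naive bound if $\nabla\bm u$ had to be paired with $\eta$ in $\bm H^{1/2}(K_0)\times H^{-1/2}(K_0)$), and it is the point where the perfect slip boundary condition is indispensable; a secondary technical point is the justification, described above, of the integration by parts for the singular flux up to $K_0$, where the hypothesis $g\in H^1_0(\Omega)$ enters.
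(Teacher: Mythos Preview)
Your proposal is correct and follows essentially the same route as the paper: both construct an $\bm H^1(K_+)$ test field whose transformed frame satisfies $\underline{\tilde v}'=\bm 0$, $\underline{\tilde v}_d=-\eta$ on $K_0$ (the paper writes this as $\tilde{\bm v}=\eta\bm N/\sqrt{|\nabla'\rho|^2+1}$ on $K_0$, which is the same thing), test the strong form \eref{eq: strong form in y}, and split into bulk terms (bounded against $\|\eta\|_{H^{1/2}(K_0)}$) and boundary terms, where the viscous boundary contribution collapses to $\int_{K_0}(\partial_d\underline{\tilde u}_d)\,\eta\,d\bm y'$ via \eref{eq2: partial_d underline tilde u_d}. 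One minor remark: your justification of the integration by parts for the singular flux via $\bm W^{1,1}$-regularity is more than needed---since the flux lies in $L^2(K_+)$ with $L^2$ divergence (by the strong form), its normal trace is already defined in $H^{-1/2}(K_0)$---and in particular the paper notes that the hypothesis $g=0$ on $\Gamma$ is \emph{not} required for this lemma.
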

\begin{proof}
	There exists some $\tilde{\bm v} \in \bm H^1(K_+)$ satisfying $\tilde{\bm v} = \eta \bm N/\sqrt{|\nabla'\rho|^2 + 1}$ (hence $\underline{\tilde{\bm v}} = \eta \underline{\bm n}$ by \lref{lem: properties of new frame}(i)) on $K_0$, $\tilde{\bm v} = \bm 0$ on $\partial K_+ \setminus K_0$, and $\|\tilde{\bm v}\|_{\bm H^1(K_+)} \le C \|\eta\|_{H^{1/2}(K_0)}$.
	We multiply \eref{eq: strong form in y}$_1$ by $\tilde v_i$, integrate over $K_+$, perform integration by parts, and add for $i = 1, \dots, d$ to have
	\begin{align*}
		&\int_{K_+} \Big[
			(\partial_k \tilde u_i \Phi_{kj}) (\partial_l \tilde v_i \Phi_{lj})
			+ \tilde g \frac{\partial_k \tilde u_i \Phi_{kj} \; \partial_l \tilde v_i \Phi_{lj}}{(|(\nabla\tilde{\bm u})\bm\Phi|^2 + \epsilon^2)^{1/2}}
			- \tilde p \, \partial_k \tilde v_i \Phi_{ki} \Big] \, d\bm y \\
		&\qquad - \int_{K_0} \underline n_l \partial_k \tilde u_i \Phi_{kj} \, \tilde v_i \Phi_{lj} \, d\bm y'
			- \int_{K_0} \underline n_l \tilde g \frac{ \partial_k \tilde u_i \Phi_{kj} }{ (|(\nabla\tilde{\bm u})\bm\Phi|^2 + \epsilon^2)^{1/2} } \tilde v_i \Phi_{lj} \, d\bm y'
			+ \int_{K_0} \underline n_k \tilde p \Phi_{ki} \tilde v_i \, d\bm y'
			= \int_{K_+} \tilde F_i \tilde v_i \, d\bm y.
	\end{align*}
	
	Let us represent the boundary integrals in the transformed frame, in which we notice that $l$ and $k$ in the diffusion terms must equal $d$ and that $k = d$ in the pressure term.
	Then, since $\partial_d \underline{\tilde u}_a = 0$ for $a \neq d$, $\underline{\tilde v}_b = 0$ for $b \neq d$, and $\underline{\tilde v}_d = -\eta$ on $K_0$, it follows from \eref{eq: Psi and Phi} that
	\begin{equation*}
	\begin{aligned}
		&\underline n_l \partial_k \tilde u_i \Phi_{kj} \, \tilde v_i \Phi_{lj} = -\Phi_{dj}\Phi_{dj} \, \Psi_{ia} (\partial_d \underline{\tilde u}_a) \, \Psi_{ib}  \underline{\tilde v}_b
			= -\frac1{|\nabla'\rho|^2 + 1} \Psi_{id}\Psi_{id} (\partial_d \underline{\tilde u}_d) \underline{\tilde v}_d
			= (\partial_d \underline{\tilde u}_d) \eta, \\
		& \underline n_k  \tilde p \Phi_{ki} \tilde v_i = - \tilde p \underbrace{\Phi_{di} \Psi_{ib}}_{= \delta_{db}} \underline{\tilde v}_b = \tilde p \eta,
	\end{aligned}
	\qquad \text{on} \quad K_0.
	\end{equation*}
	Consequently,
	\begin{align*}
		\int_{K_0} \tilde p \eta \, d\bm y'
		&= - \int_{K_+} \Big[
			(\partial_k \tilde u_i \Phi_{kj}) (\partial_l \tilde v_i \Phi_{lj})
			+ \tilde g \frac{\partial_k \tilde u_i \Phi_{kj} \; \partial_l \tilde v_i \Phi_{lj}}{(|(\nabla\tilde{\bm u})\bm\Phi|^2 + \epsilon^2)^{1/2}}
			\tilde p \, \partial_k \tilde v_i \Phi_{ki} - \tilde F_i \tilde v_i \Big] \, d\bm y \\
		&\qquad + \int_{K_0} \partial_d \underline{\tilde u}_d \, \eta \, d\bm y'
			+ \int_{K_0} \tilde g \frac{ \partial_d \tilde u_i \Phi_{dj} \Phi_{dj} }{ (|(\nabla\tilde{\bm u})\bm\Phi|^2 + \epsilon^2)^{1/2} } \Psi_{id} \eta \, d\bm y'.
	\end{align*}
	By \eref{eq: partial_d underline tilde u} we obtain
	\begin{align*}
		\left| \int_{K_0} \tilde p \eta \, d\bm y' \right|
		&\le C \big( \|\tilde{\bm F}\|_{\bm L^2(K_+)} + \|\tilde g\|_{L^2(K_+)} + \|\nabla \tilde{\bm u}\|_{\bm L^2(K_+)} + \|\tilde p\|_{L^2(K_+)}) \|\tilde{\bm v}\|_{\bm H^1(K_+)} \\
		&\hspace{2cm} + C(\|\tilde{\bm u}\|_{\bm H^1(K_0 \cap (-r/2, r/2)^d)} + \|\tilde g\|_{L^2(K_0)} \big) \|\eta\|_{L^2(K_0)},
	\end{align*}
	which proves the lemma.
\end{proof}
\begin{rem}
	As shown above, the assumption $g = 0$ on $\Gamma$ is not necessary here.
\end{rem}

Setting $\eta := \partial_d (\theta^2 \, \partial_d\tilde u_d^{(m)}) \in H^1(K_+)$, we see from \eref{eq2: partial_d underline tilde u_d}, \eref{eq: partial_d^2 underline tilde u_d^m}, and a trace inequality that
\begin{align*}
	\|\eta\|_{H^{1/2}(K_0)} &\le C (\|\nabla' \tilde{\bm u}^{(m)}\|_{\bm H^{1/2}(K_0 \cap (-r/2, r/2)^d)} + \|\tilde{\bm u}^{(m)}\|_{\bm H^{1/2}(K_0)}) \\
		&\le C (\|\nabla' \tilde{\bm u}^{(m)}\|_{\bm H^1(K_+ \cap (-r/2, r/2)^d)} + \|\tilde{\bm u}^{(m)}\|_{\bm H^1(K_+)}).
\end{align*}
Applying \lref{lem: trace of p} to this $\eta$ and using \eref{eq: tilde F}, we conclude that $I_{32}^{(m)}$ also admits a bound as claimed in \eref{eq: key estimate for pressure term}.
This completes the proof of \lref{lem: normal direction} and thus that of \pref{main prop}.

\section{$H^2$-regularity for the non-stationary problem} \label{sec: non-stationary}
In this section, we focus on the non-stationary Bingham--Navier--Stokes problem \eref{eeq: strong form}--\eref{eeq: IC}, retrieving the notation $\nu$ to represent the positive viscosity constant.
Similarly to \pref{prop: VIsigma}(i), one can interpret \eqref{eeq: strong form}--\eqref{eeq: slipBC} as a variational inequality (cf.\ \cite[Section VI.3.2]{DuLi1976}) as follows:
\begin{equation} \label{wf}
	( \bm u'(t), \bm v -\bm u(t) )
	 - \nu ( \Delta \bm u(t),\bm v -\bm u(t))
	 + b( \bm u(t), \bm u(t), \bm v - \bm u(t))
	{} +  ( g(t), |\nabla \bm v| - |\nabla \bm u(t)| ) 
	\ge 
	( \bm f(t), \bm v - \bm u(t) ) \quad \forall \bm v \in \bm V_\sigma
\end{equation}
and for a.a.\ $t \in (0,T)$ with the initial condition $\bm u(0)=\bm u_0$ in $\bm H_\sigma$.
Here, $\bm u'$ means the time derivative $\partial \bm u/\partial t$, and the trilinear form $b$ is given by
\begin{equation*}
	b(\bm u, \bm v, \bm w)= \int_\Omega u_i \frac{\partial v_j}{\partial x_i} w_j dx 
	\qquad \forall \bm u, \bm v, \bm w \in \bm V_\sigma. 
\end{equation*}
We also define $B: \bm V_\sigma \to \bm V_\sigma'$ by $B \bm u:=P[(\bm u\cdot \nabla)\bm u]$, where  
$P:\bm L^2(\Omega) \to \bm H_\sigma$ is the Helmholtz--Leray projection, which can be naturally extended to $\bm V_{\sigma}' \to \bm V_\sigma'$. 
Then we have $\langle B \bm u , \bm v \rangle = b(\bm u, \bm u, \bm v)$ (recall $\langle \cdot, \cdot \rangle$ denotes the duality pairing between $\bm V_\sigma'$ and $\bm V_\sigma$) and
\begin{equation}
	\|  B \bm v-B \bm w \|_{\bm V_\sigma'} 
	\le C_{\rm L} 
	\| \bm v - \bm w  \|_{\bm V_\sigma} 
	\{\| \bm v \|_{\bm V_\sigma}+ \| \bm w \|_{\bm V_\sigma} \}
	\label{contib}
\end{equation}
for some constant $C_{\rm L} = C(\Omega) >0$, which implies the continuity of $B$ (cf.\ \cite[pp.\ 251--253]{Bar10}).

Under these settings we obtain the following extension of \cite[Theorem~5.9]{Bar10} to the Bingham--Navier--Stokes problem.
\begin{thm} \label{thm: evolution}
Let $\Omega$ be of $C^{3,1}$-class, $\bm f \in W^{1,1}(0, T; \bm L^2(\Omega))$, and $g \in H^1(0, T;  L^2(\Omega)) \cap L^\infty(0, T; H^1_0(\Omega))$, $g \ge 0$. 
Assume that there exists $\bm f^* \in \bm L^2(\Omega)$ such that $\bm u_0 \in \bm V_\sigma \cap \bm H^2(\Omega)$ satisfies
\begin{gather}
	(\nabla \bm u_0 \bm n)_\tau = \bm 0 \quad \text{a.e.\ on } \; \Gamma, \label{initialc1} \\
	(-\nu \Delta \bm u_0 + B \bm u_0 - \bm f(0), \bm v - \bm u_0) + ( g(0), |\nabla \bm v| - |\nabla \bm u_0| ) \ge (\bm f^*, \bm v - \bm u_0) \qquad \forall \bm v \in \bm V_\sigma. \label{initialc}
\end{gather}
Then, there exist 
$T_* \in (0,T]$ and a unique function $\bm u \in W^{1,\infty}(0,T_*;\bm H_\sigma) \cap H^1(0,T_*;\bm V_\sigma) \cap L^\infty (0,T_*;\bm H^2(\Omega))$
such that $\bm u$ satisfies $\bm u(0)=\bm u_0$ and \eref{wf} for a.a.\ $t \in (0,T_*)$. 
If $d=2$, $T_*$ can be taken as $T$.
\end{thm}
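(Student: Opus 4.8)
\textit{Proof proposal.} The plan is to combine Rothe's method (semi-discretization in time) with a truncation of the convection term in the spirit of Barbu, using the stationary regularity \cref{cor: main result} at every time step. First I would replace $B$ by a globally Lipschitz truncation $B_M \colon \bm V_\sigma \to \bm V_\sigma'$ (e.g.\ $B_M\bm v = \theta_M(\|\bm v\|_{\bm V_\sigma}^2) B\bm v$ with $\theta_M$ a smooth cut-off), chosen so that $B_M = B$ on $\{\|\bm v\|_{\bm V_\sigma} \le M\}$ and $\langle B_M\bm v, \bm v\rangle = 0$ still holds, and fix $M$ larger than $\|\bm u_0\|_{\bm V_\sigma}$. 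For $N \in \mathbb N$, $\tau = T/N$, $t_k = k\tau$, $\bm u^0 = \bm u_0$, I define $\bm u^k \in \bm V_\sigma$ recursively as the solution of
\[
\frac1\tau(\bm u^k - \bm u^{k-1}, \bm v - \bm u^k) + \nu(\nabla\bm u^k, \nabla(\bm v - \bm u^k)) + (g(t_k), |\nabla\bm v| - |\nabla\bm u^k|) \ge \langle\bm f(t_k) - B_M\bm u^{k-1}, \bm v - \bm u^k\rangle \qquad \forall\bm v \in \bm V_\sigma ,
\]
whose unique solvability follows from the theory used in \pref{prop: VIsigma}(ii), the operator $\tfrac1\tau\,\mathrm{id} + \nu(-\Delta)$ being coercive and the convection term being an explicit datum. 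Reading this as the Bingham--Stokes inequality \eref{eq: VI} with right-hand side $\bm f(t_k) - B_M\bm u^{k-1} - (\bm u^k-\bm u^{k-1})/\tau$, and arguing by induction on $k$ with base case $\bm u_0 \in \bm H^2(\Omega)$ (so that $B_M\bm u_0 \in \bm L^2(\Omega)$ by the Sobolev embedding $\bm H^2(\Omega) \hookrightarrow \bm L^\infty(\Omega)$, $d \le 3$), \cref{cor: main result} gives $\bm u^k \in \bm H^2(\Omega)$ with $(\nabla\bm u^k\,\bm n)_\tau = \bm 0$ on $\Gamma$, so the scheme is consistent at the $\bm H^2$-level of regularity.

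Next I would establish a priori estimates uniform in $\tau$. Testing with $\bm v = \bm 0$ and $\bm v = 2\bm u^k$ and summing yields the energy bound of $\{\bm u^k\}$ in the discrete $\ell^\infty(\bm H_\sigma) \cap \ell^2(\bm V_\sigma)$-norm. The crucial step is the bound for the discrete time derivative $\bm v^k := (\bm u^k - \bm u^{k-1})/\tau$: subtracting the inequalities at levels $k$ and $k-1$ (tested with $\bm v = \bm u^{k-1}$ and $\bm v = \bm u^k$ respectively), the convex terms combine into $(g(t_k) - g(t_{k-1}), |\nabla\bm u^{k-1}| - |\nabla\bm u^k|)$, bounded by $\tau\|g(t_k) - g(t_{k-1})\|_{L^2(\Omega)}\|\nabla\bm v^k\|_{\bm L^2(\Omega)}$ since $g \in H^1(0,T;L^2(\Omega))$; the convection difference is handled through the Lipschitz bound for $B_M$, refined by a Ladyzhenskaya-type inequality when $d = 2$, and the forcing difference summed against $\bm v^k$ is controlled by $\|\bm f'\|_{L^1(0,T;\bm L^2(\Omega))}$. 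For the first level $k = 1$ one compares the scheme with the compatibility condition \eref{initialc}: integration by parts gives $(-\nu\Delta\bm u_0, \bm v) = \nu(\nabla\bm u_0, \nabla\bm v)$ for $\bm v \in \bm V$ \emph{precisely because of} \eref{initialc1}, and this together with \eref{initialc} bounds $\|\bm v^1\|_{\bm L^2(\Omega)}$ independently of $\tau$. A discrete Gronwall argument then produces a bound for $\{\bm v^k\}$ in $\ell^\infty(\bm H_\sigma) \cap \ell^2(\bm V_\sigma)$ — global on $[0,T]$ when $d = 2$, and only on some subinterval $[0,T_*]$ when $d = 3$, where the Ladyzhenskaya estimate has unfavourable exponents.

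From the bound on $\bm v^k$ one recovers $\{\bm u^k\}$ bounded in $\ell^\infty(\bm V_\sigma)$, and then feeding $\|\bm f(t_k) - B_M\bm u^{k-1} - \bm v^k\|_{\bm L^2(\Omega)}$ into \cref{cor: main result} and absorbing $\|B_M\bm u^{k-1}\|_{\bm L^2(\Omega)} \le C\|\bm u^{k-1}\|_{\bm H^2(\Omega)}^{s}\|\bm u^{k-1}\|_{\bm H^1(\Omega)}^{1-s}$ with some $s < 1$ (an interpolation inequality for $\bm L^\infty \cdot \bm L^2$ valid for $d \le 3$) by Young's inequality, a second induction closes a uniform bound for $\{\bm u^k\}$ in $\ell^\infty(\bm H^2(\Omega))$. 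Passing to the limit $\tau \to 0$ is then routine: the piecewise-constant and piecewise-affine interpolants are bounded in $W^{1,\infty}(0,T_*;\bm H_\sigma) \cap H^1(0,T_*;\bm V_\sigma) \cap L^\infty(0,T_*;\bm H^2(\Omega))$, so by Aubin--Lions and weak-$*$ compactness a subsequence converges (strongly in $L^2(0,T_*;\bm V_\sigma)$ and $C([0,T_*];\bm H_\sigma)$) to some $\bm u$, which, the convex functional $\bm v \mapsto (g(t), |\nabla\bm v|)$ being weakly lower semicontinuous, satisfies the $B_M$-version of \eref{wf} and $\bm u(0) = \bm u_0$. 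Since $\bm u \in C([0,T_*];\bm V_\sigma)$ and $\|\bm u_0\|_{\bm V_\sigma} < M$, shrinking $T_*$ if necessary makes $\|\bm u(t)\|_{\bm V_\sigma} < M$ on $[0,T_*]$, hence $B_M\bm u = B\bm u$ there and $\bm u$ solves \eref{wf}; for $d = 2$ the global $\ell^\infty(\bm V_\sigma)$ bound permits $M$, and therefore $T_*$, to be taken arbitrary, giving $T_* = T$. Uniqueness follows by testing the inequality for the difference of two solutions with the difference itself, using \eref{eq: coercivity}, the Lipschitz estimate \eref{contib}, and Gronwall.

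The main obstacle is the closure of the uniform $\bm H^2$-estimate. The convective forcing $B\bm u^{k-1}$ is not in $\bm L^2(\Omega)$ from the $\bm H^1$-energy bound alone, so one cannot decouple the application of \cref{cor: main result} from the $\ell^\infty(\bm H^2)$ induction: the interpolation absorption above must be carried along the induction, with every constant kept independent of $\tau$, and — for $d = 3$ — with simultaneous control of the length of the interval on which the truncation remains inactive. Keeping these three bookkeeping threads (the discrete time-derivative estimate, the inductive $\bm H^2$-bound via \cref{cor: main result}, and the size of $T_*$) compatible is the delicate part of the argument.
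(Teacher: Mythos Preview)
Your proposal follows the same overall strategy as the paper---truncate $B$ to $B_M$, discretize in time, invoke \cref{cor: main result} at each step, pass to the limit---but differs in two structural choices that are worth flagging.

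First, you treat the convection \emph{explicitly} ($B_M\bm u^{k-1}$ as a datum), whereas the paper keeps it \emph{implicit} and proves solvability of each step by showing that $\partial j_{g_k} + B_M + \alpha I$ is maximal monotone for large $\alpha$ (\lref{lem: maximal monotone}, via a perturbation theorem for $m$-accretive operators). Your scheme is simpler to solve, but it loses the exact cancellation $(B_M\bm u_k,\bm u_k)=0$ in the basic energy estimate and the monotonicity of $\tfrac14\partial_* j_2 + B_M + \alpha I$ that the paper exploits in \lref{3rdest}; as a result your discrete bounds pick up extra $M$-dependent constants. (A minor related point: evaluating $g$ at the nodes $t_k$ requires $g(t_k)\in H^1_0(\Omega)$, which $g\in L^\infty(0,T;H^1_0(\Omega))$ only guarantees for a.e.\ $t$; the paper sidesteps this by using time averages $g_k$.)

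Second, the paper removes the truncation \emph{after} the limit $N\to\infty$: it first constructs a solution of the $B_M$-problem on all of $[0,T]$ (\pref{prop: sub}), then derives an $M$-\emph{independent} bound on $\|\bm u\|_{L^\infty(0,T_*;\bm V_\sigma)}$ at the continuous level by combining the chain rule for $t\mapsto\varphi^t(\bm u(t))$ (\lref{lem: chain}) with the $M$-independent inequalities \eref{bm2}--\eref{bm3}, and only then fixes $M$ above that bound. Your route---shrink $T_*$ by continuity so that $\|\bm u(t)\|_{\bm V_\sigma}<M$---is fine for local existence in $d=3$, but your claim for $d=2$ that ``the global $\ell^\infty(\bm V_\sigma)$ bound permits $M$ to be taken arbitrary'' needs that bound to be independent of $M$, which your discrete estimates, driven by the Lipschitz constant of $B_M$, do not deliver. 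To close the $d=2$ case you still need an $M$-independent $\bm V_\sigma$-estimate, either at the discrete level via \eref{bm2} (awkward with the explicit scheme, since the convection difference no longer disappears by monotonicity) or by importing the paper's continuous-level argument; as written this is the one genuine gap in your outline.
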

\begin{rem}
	Assumption \eref{initialc} is a kind of a compatibility condition for initial data and can be expressed as $\partial \varphi^0(\bm u_0) + B\bm u_0 - P\bm f(0) \cap \bm H_\sigma \neq \emptyset$ ($\varphi^t$ is defined in Subsection \ref{subsec: limit in N} below).
\end{rem}

We can obtain another strong solution with less regular $\bm f$, $g$, and $\bm u_0$ as follows.
Note that the compatibility condition \eref{initialc} is not necessary to assume in this case.
\begin{thm} \label{thm: evolution weak}
	Let $\Omega$ be of $C^{3,1}$-class, $\bm f \in L^2(0,T;\bm L^2(\Omega))$, $g \in W^{1,1}(0,T; L^2(\Omega)) \cap L^2(0,T; H^1_0(\Omega))$, $g \ge 0$, and $\bm u_0 \in \bm V_\sigma$. 
	Then, there exist $T_* \in (0,T]$ and a unique function $\bm u \in H^1(0,T_*;\bm H_\sigma) \cap C([0,T_*]; \bm V_\sigma) \cap L^2 (0,T_*;\bm H^2(\Omega))$ such that $\bm u$ satisfies $\bm u(0)=\bm u_0$ and \eqref{wf} for a.a.\ $t \in (0,T_*)$.
	If $d=2$, $T_*$ can be taken as $T$.
\end{thm}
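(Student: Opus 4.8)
\emph{Plan.} We follow the route announced in the introduction: semidiscretization in time (Rothe's method) combined with a truncation of the convection term, which turns each time step into the stationary Bingham--Stokes problem already treated in \cref{cor: main result}. Fix $M>0$ and let $R_M\colon\bm V_\sigma\to\bm V_\sigma$ be the metric projection onto the closed ball of radius $M$; then $R_M$ is $1$-Lipschitz, $R_M\bm v=\lambda(\bm v)\bm v$ for a scalar $\lambda(\bm v)\in(0,1]$ (so $R_M$ preserves $\bm H^2$-regularity), and by \eref{contib} the map $B\circ R_M\colon\bm V_\sigma\to\bm V_\sigma'$ is bounded and globally Lipschitz. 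For $N\in\mathbb N$ set $\tau=T/N$, $t_n=n\tau$, $\bm f^n=\tau^{-1}\int_{t_{n-1}}^{t_n}\bm f$, $g^n=\tau^{-1}\int_{t_{n-1}}^{t_n}g\in H^1_0(\Omega)$ (non-negative), and $\bm u^0=\bm u_0$, which we replace---if necessary, since $\bm u_0$ is only in $\bm V_\sigma$---by $\bm u_0^N\in\bm V_\sigma\cap\bm H^2(\Omega)$ with $(\nabla\bm u_0^N\bm n)_\tau=\bm 0$, $\bm u_0^N\to\bm u_0$ in $\bm V_\sigma$ and $\tau\|\bm u_0^N\|_{\bm H^2(\Omega)}^2\to 0$ (possible by density of such functions in $\bm V_\sigma$). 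We then define $\bm u^n\in\bm V_\sigma$ inductively by
\begin{equation*}
	\Bigl(\tfrac{\bm u^n-\bm u^{n-1}}{\tau},\bm v-\bm u^n\Bigr)+\nu\bigl(\nabla\bm u^n,\nabla(\bm v-\bm u^n)\bigr)+\bigl(g^n,|\nabla\bm v|-|\nabla\bm u^n|\bigr)\ge\bigl(\bm f^n-B(R_M\bm u^{n-1}),\bm v-\bm u^n\bigr)\quad\forall\bm v\in\bm V_\sigma.
\end{equation*}
Unique solvability of each step is immediate from \pref{prop: VIsigma}(ii) (the extra zeroth-order term being coercive). Moving $\tfrac1\tau(\bm u^n-\bm u^{n-1})$ and $B(R_M\bm u^{n-1})$ to the right-hand side---both lie in $\bm L^2(\Omega)$, the latter by the inductive hypothesis $\bm u^{n-1}\in\bm H^2(\Omega)$ together with $\bm H^2(\Omega)\hookrightarrow\bm L^\infty(\Omega)$ ($d\le 3$)---and dividing by $\nu$, \cref{cor: main result} gives $\bm u^n\in\bm H^2(\Omega)$, $(\nabla\bm u^n\bm n)_\tau=\bm 0$ a.e.\ on $\Gamma$, and
\begin{equation*}
	\|\bm u^n\|_{\bm H^2(\Omega)}\le C\Bigl(\|\bm f^n\|_{\bm L^2(\Omega)}+\|B(R_M\bm u^{n-1})\|_{\bm L^2(\Omega)}+\tfrac1\tau\|\bm u^n-\bm u^{n-1}\|_{\bm L^2(\Omega)}+\|g^n\|_{H^1(\Omega)}\Bigr).
\end{equation*}

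Next come the a priori estimates uniform in $\tau$ (for fixed $M$). Taking $\bm v=\bm u^{n-1}$ above and using $(\nabla\bm u^n,\nabla\bm u^{n-1})\le\tfrac12\|\nabla\bm u^n\|^2+\tfrac12\|\nabla\bm u^{n-1}\|^2$ yields, with $\Phi_n:=\tfrac\nu2\|\nabla\bm u^n\|_{\bm L^2(\Omega)}^2+(g^n,|\nabla\bm u^n|)$,
\begin{equation*}
	\tfrac1\tau\|\bm u^n-\bm u^{n-1}\|_{\bm L^2(\Omega)}^2+\Phi_n\le\Phi_{n-1}+\bigl(g^n-g^{n-1},|\nabla\bm u^{n-1}|\bigr)+\bigl(\bm f^n-B(R_M\bm u^{n-1}),\bm u^n-\bm u^{n-1}\bigr).
\end{equation*}
Summing, telescoping, absorbing the last inner product into $\tfrac1\tau\|\bm u^n-\bm u^{n-1}\|^2$, and using $g\in W^{1,1}(0,T;L^2(\Omega))$ and $\bm f\in L^2(0,T;\bm L^2(\Omega))$, we obtain $\max_m\Phi_m+\sum_n\tfrac1\tau\|\bm u^n-\bm u^{n-1}\|^2\le C_{\mathrm{data}}+C\sum_n\tau\|B(R_M\bm u^{n-1})\|_{\bm L^2(\Omega)}^2$. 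Combined with the $\bm H^2$-bound above, everything reduces to controlling $\sum_n\tau\|B(R_M\bm u^{n-1})\|_{\bm L^2}^2$; the naive estimate $\|B(R_M\bm v)\|_{\bm L^2}\le C\|R_M\bm v\|_{\bm L^\infty}\|\nabla R_M\bm v\|_{\bm L^2}\le CM\|\bm v\|_{\bm H^2}$ produces a recursion $\tau\|\bm u^n\|_{\bm H^2}^2\le CM^2\tau\|\bm u^{n-1}\|_{\bm H^2}^2+(\text{summable})$ whose solution grows like $(CM^2)^n$ and is useless as $\tau\to 0$. Instead one interpolates, using Agmon's inequality ($d\le 3$),
\begin{equation*}
	\|B(R_M\bm v)\|_{\bm L^2(\Omega)}\le C\|R_M\bm v\|_{\bm H^1(\Omega)}^{3/2}\|R_M\bm v\|_{\bm H^2(\Omega)}^{1/2}\le CM^{3/2}\|\bm v\|_{\bm H^2(\Omega)}^{1/2},
\end{equation*}
so that $\sum_n\tau\|B(R_M\bm u^{n-1})\|_{\bm L^2}^2\le CM^3\sqrt T\bigl(\sum_n\tau\|\bm u^{n-1}\|_{\bm H^2}^2\bigr)^{1/2}$; a Young/discrete-Gronwall argument then closes the system and gives, uniformly in $\tau$ (with $C=C(M,\mathrm{data})$),
\begin{equation*}
	\max_n\|\bm u^n\|_{\bm H^1(\Omega)}^2+\sum_n\tfrac1\tau\|\bm u^n-\bm u^{n-1}\|_{\bm L^2(\Omega)}^2+\sum_n\tau\|\bm u^n\|_{\bm H^2(\Omega)}^2\le C.
\end{equation*}

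Then one passes to the limit and removes the truncation. Writing $\varphi^t(\bm v):=\tfrac\nu2\|\nabla\bm v\|_{\bm L^2(\Omega)}^2+(g(t),|\nabla\bm v|)$, the above bounds make the piecewise-linear and piecewise-constant interpolants of $\{\bm u^n\}$ bounded in $H^1(0,T;\bm H_\sigma)\cap L^\infty(0,T;\bm V_\sigma)\cap L^2(0,T;\bm H^2(\Omega))$; by Aubin--Lions a subsequence converges strongly in $L^2(0,T;\bm V_\sigma)$ and weakly in the above spaces to some $\bm u_M$ with $\bm u_M(0)=\bm u_0$ and $\bm u_M\in C([0,T];\bm V_\sigma)$. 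Passing to the limit---handling the Bingham term by lower semicontinuity and the strong $L^2(0,T;\bm V_\sigma)$-convergence, and the remaining terms by the standard variational-inequality arguments as in \pref{prop: VIsigma}---shows that $\bm u_M$ solves \eref{wf} with $B\bm u$ replaced by $B(R_M\bm u)$. Taking the limit of the discrete energy inequality gives $\tfrac\nu2\|\nabla\bm u_M(t)\|_{\bm L^2(\Omega)}^2\le\varphi^t(\bm u_M)\le C_0\varphi^0(\bm u_0)+\omega(t)$ with $\omega(t)\to 0$ as $t\to 0^+$; hence, choosing $M$ large (in terms of $\|\bm u_0\|_{\bm V_\sigma}$ and $\|g(0)\|_{L^2(\Omega)}$) and then $T_*\in(0,T]$ so small that $\tfrac2\nu(C_0\varphi^0(\bm u_0)+\omega(T_*))\le M^2$, we get $\|\bm u_M(t)\|_{\bm V_\sigma}\le M$ on $[0,T_*]$, so $R_M\bm u_M=\bm u_M$ there and $\bm u:=\bm u_M|_{[0,T_*]}$ solves \eref{wf}. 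Uniqueness on $[0,T_*]$ follows by testing the difference of two solutions with itself and using monotonicity of the Bingham term, the coercivity $\nu\|\nabla\cdot\|^2$ of the diffusion, the skew-symmetry of $b$, and \eref{contib}, via Gronwall's inequality. Finally, when $d=2$ Ladyzhenskaya's inequality makes the basic energy bound $M$-independent and renders the $\varphi^t(\bm u_M)$ differential inequality globally solvable (its nonlinear term carries an $L^1$-in-time coefficient $\propto\|\nabla\bm u_M\|_{\bm L^2(\Omega)}^2$), so $M$ may be chosen depending only on the data and one takes $T_*=T$.

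The main obstacle is precisely the coupling between the energy estimate and the $\bm H^2$-estimate introduced by the convection term: without the fractional interpolation above, the discrete Gronwall recursion for $\sum_n\tau\|\bm u^n\|_{\bm H^2}^2$ blows up geometrically in $n$ and becomes vacuous as $\tau\to 0$; and even with it, in $d=3$ there is no global-in-time control of $\|\bm u_M(t)\|_{\bm V_\sigma}$, which is why the truncation can only be removed on a short interval and $T_*<T$ in general. A secondary technical point is that, $\bm u_0$ being only in $\bm V_\sigma$, the very first time step must be fed with a smooth approximation $\bm u_0^N$ chosen so that $\tau\|\bm u_0^N\|_{\bm H^2}^2\to 0$, which makes its (otherwise $M$-dependent) contribution to the estimates harmless.
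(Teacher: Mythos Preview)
Your proof is correct and takes a genuinely different route from the paper's. The key difference is that you treat the convection term \emph{explicitly} (evaluating $B(R_M\bm u^{n-1})$ at the previous step), whereas the paper keeps it \emph{implicit} ($B_M\bm u_k$ on the left). Your choice lets each time step be solved directly by \pref{prop: VIsigma}(ii) and \cref{cor: main result}, completely bypassing the paper's \lref{lem: maximal monotone} on maximal monotonicity of $\partial j_g + B_M + \alpha I$ (which itself uses \cref{cor: main result} via a perturbation theorem for $m$-accretive operators). The price is that your basic $\bm H_\sigma$-energy estimate loses the cancellation $(B_M\bm u_k,\bm u_k)=0$ enjoyed by the implicit scheme, so the $M$-independent bound of the paper's \lref{1stest} is not available at the discrete level; this is harmless for the local result but forces your $d=2$ global argument to be carried out at the continuous level (where $(B(R_M\bm u),\bm u)=\lambda(\bm u)^2\,b(\bm u,\bm u,\bm u)=0$ is recovered)---you should make that switch explicit. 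Your handling of $\bm u_0\in\bm V_\sigma\setminus\bm H^2(\Omega)$ by a smooth approximation with $\tau\|\bm u_0^N\|_{\bm H^2}^2\to 0$ is also different from the paper's device of replacing $\hat{\bm u}_N$ on $[0,h_N]$ by the constant $\bm u_1$. One technical point you glossed over: to pass to the limit in $B(R_M\bm u^{n-1})$ and in the Bingham term you need strong $L^2(0,T;\bm V_\sigma)$-convergence of the piecewise-constant and lagged interpolants, hence a bound on $\sum_n\|\nabla(\bm u^n-\bm u^{n-1})\|_{\bm L^2(\Omega)}^2$; this comes for free from the identity $(\nabla\bm u^n,\nabla(\bm u^n-\bm u^{n-1}))=\tfrac12\|\nabla\bm u^n\|^2-\tfrac12\|\nabla\bm u^{n-1}\|^2+\tfrac12\|\nabla(\bm u^n-\bm u^{n-1})\|^2$, but your stated inequality discards exactly that term (the paper records the analogous estimate as its \lref{4thest}).
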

\begin{rem} 
	(i) Since the embedding $H^1(0,T_*; \bm L^2(\Omega)) \cap L^2 (0,T_*; \bm H^2(\Omega)) \hookrightarrow C([0,T_*]; \bm H^1(\Omega))$ holds, it suffices to construct a solution $\bm u \in H^1(0,T_*;\bm H_\sigma) \cap L^2 (0,T_*;\bm H^2(\Omega))$.
	
	(ii) The well-posedness in this class may be discussed also from the abstract theory of evolution inclusion governed by the subdifferential with nonmonotone perturbation \cite{Ota82}.
	However, in order to apply this abstract theory we need stronger assumption on $g$ with respect to $t$, more precisely the Lipschitz continuity (see \cite[p.\ 273]{Ota82}). 
	In this sense, our assumption of Theorem~\ref{thm: evolution weak} with respect to $g$ is weaker than \cite{Ota82}.
\end{rem}

\subsection{Truncation of $B$}
The proof of the above theorems is based on the truncation method. 
To this end, for $M > 0$ we introduce a truncated operator $B_M: \bm V_\sigma \to \bm V_\sigma'$ by
\begin{equation*}
	B_M \bm v =
	\begin{cases}
	B \bm v & {\rm if~} \| \bm v \|_{\bm V_\sigma} \le M, \\
	\displaystyle \frac{M^2}{\| \bm v \|_{\bm V_\sigma}^2} B\bm v
	& {\rm if~} \| \bm v \|_{\bm V_\sigma} > M.\\
	\end{cases} 
\end{equation*}
The essential idea of this truncation goes back to \cite{Bar97, Bar10, BS01}, see also a similar method in \cite{CRK06, DD11}.
It follows from \cite[Chapter~5]{Bar10} that if $\bm v \in \bm V \cap \bm  H^2(\Omega)$ then $B_M \bm v$ makes sense in $\bm H_\sigma$ and
\begin{alignat}{2}
	\| B_M \bm v\|_{\bm H_\sigma}
	&\le C_{\rm L} M^{\frac{3}{2}} 
	\| \bm v \| _{\bm H^2(\Omega)}^{\frac{1}{2}},
	\label{bm}\\
	\| B_M \bm v \|_{\bm H_\sigma}
	&\le C_{\rm L} 
	\| \bm v \| _{\bm H_\sigma}^{\frac{1}{2}}
	\| \bm v \|_{\bm V_\sigma}
	\| \bm v \| _{\bm H^2(\Omega)}^{\frac{1}{2}}
	\quad &{\rm if ~} d = 2,
	\label{bm2}\\
	\| B_M \bm v \|_{\bm H_\sigma}
	&\le C_{\rm L} \| \bm v \|_{\bm V_\sigma}^{\frac{3}{2}} 
	\| \bm v \| _{\bm H^2(\Omega)}^{\frac{1}{2}}
	\quad &{\rm if ~} d = 3,
	\label{bm3}
\end{alignat}
for some constant $C_{\mathrm{L}} = C(\Omega) > 0$ independent of $M$.
Moreover, similarly to \eqref{contib}, for all $\bm v, \bm w \in \bm V_\sigma$ we have
\begin{equation}
	\| B_M \bm v-B_M \bm w \|_{\bm V_\sigma'} 
	\le 2C_{\rm L} \| \bm v - \bm w \|_{\bm V_\sigma} 
	\{ 
	\| \bm v \|_{\bm V_\sigma}+\| \bm w \|_{\bm V_\sigma} 
	\}.
	\label{contibm}
\end{equation}

Given a nonnegative function $g \in H^1_0(\Omega)$, define a proper, lower semi-continuous, and convex functional 
$j_g : \bm H_\sigma \to [0,+\infty]$ by 
\begin{equation*}
	j_g(\bm v) = \frac{\nu}{2} \int_\Omega |\nabla \bm v|^2 \, d\bm x + \int_\Omega g |\nabla \bm v| \, d\bm x \quad \text{if } \; \bm v \in D(j_g) := \bm V_\sigma, \qquad
	j_g(\bm v) =+\infty \quad \text{if } \; \bm v \in \bm H_\sigma \setminus \bm V_\sigma.
\end{equation*}
By the definition of a subdifferential, we have $\bm f \in \partial j_g(\bm u)$ in $\bm H_\sigma$ if and only if $\bm u \in \bm V_\sigma$ satisfies
\begin{equation}\label{sf}
		\nu (\nabla \bm u, \nabla (\bm v - \bm u) ) + (g, |\nabla \bm v| - |\nabla \bm u|) 
		\ge ( \bm f, \bm v - \bm u )\qquad \forall \bm v \in \bm V_\sigma.
\end{equation}
Then \cref{cor: main result} implies that $\bm u \in \{ \bm v \in \bm V_\sigma \cap \bm H^2(\Omega) \mid (\nabla \bm v \, \bm n)_\tau = \bm 0 \; \text{ a.e.\ on } \Gamma \} =: \bm W$ and that
\begin{equation} \label{K-estimate}
	\| \bm u \|_{\bm H^2(\Omega)} \le C_{\rm K} (\| \bm f\|_{\bm L^2(\Omega)}+ \|g\| _{H^1(\Omega)}),
\end{equation}
where $C_{\rm K} = C(\Omega, \nu)$ is independent of $\bm f$ and $g$.
In particular, we have $D(\partial j_g) \subset \bm W$.

Under these settings, according to \cite{Bar10} we define an operator $\Gamma_{g, M} :\bm H_\sigma \to 2^{\bm H_\sigma}$ by
\begin{equation*}
	\Gamma_{g, M} = \partial j_g + B_M, \quad D(\Gamma_{g, M}) = D(\partial j_g).
\end{equation*}

\begin{lem}\label{lem: maximal monotone}
	Let $g \in H^1_0(\Omega)$ and $M \in (0, \infty)$.
	Then there exists a constant $\alpha_M = C(\Omega, \nu, M) > 0$ independent of $g$ such that $\Gamma_{g, M} + \alpha I$ is maximal monotone 
	in $\bm H_\sigma$ for all $\alpha \ge \alpha_M$, where $I$ denotes the identity operator.
\end{lem}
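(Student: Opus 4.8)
The plan is to use the range characterization of maximal monotonicity (Minty's theorem): a monotone operator $A$ in the Hilbert space $\bm H_\sigma$ is maximal monotone if and only if $R(I+A)=\bm H_\sigma$. Accordingly I would verify, for $\alpha\ge\alpha_M$, both (a) the monotonicity of $\Gamma_{g,M}+\alpha I$ and (b) the range condition $R\big(I+\Gamma_{g,M}+\alpha I\big)=\bm H_\sigma$. Throughout I would use that $\partial j_g$ is maximal monotone in $\bm H_\sigma$ (subdifferential of a proper, l.s.c., convex functional), that $\bm 0\in\partial j_g(\bm 0)$ because $j_g\ge 0=j_g(\bm 0)$, that $D(\partial j_g)\subset\bm W\subset\bm H^2(\Omega)$ and $\|\bm u\|_{\bm H^2(\Omega)}\le C_{\rm K}\big(\|\bm\xi\|_{\bm L^2(\Omega)}+\|g\|_{H^1(\Omega)}\big)$ whenever $\bm\xi\in\partial j_g(\bm u)$ (by \eref{sf} and \cref{cor: main result}), so that $B_M$ is $\bm H_\sigma$-valued on $D(\partial j_g)$ by \eref{bm}, and that $j_g$ is $\nu$-strongly convex with respect to the seminorm $\|\nabla\cdot\|_{\bm L^2(\Omega)}$ (its quadratic part being $\tfrac{\nu}{2}\|\nabla\cdot\|_{\bm L^2(\Omega)}^2$), hence
\begin{equation*}
	(\bm\xi-\bm\eta,\bm u-\bm v)_{\bm H_\sigma}\ge\nu\,\|\nabla(\bm u-\bm v)\|_{\bm L^2(\Omega)}^2\qquad\big(\bm\xi\in\partial j_g(\bm u),\ \bm\eta\in\partial j_g(\bm v)\big).
\end{equation*}

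For (a) I would combine this strong monotonicity with a dissipativity estimate for the truncated convection term. As in \cite[Chapter~5]{Bar10}, using \eref{contibm} and an interpolation between $\bm H_\sigma$ and $\bm V_\sigma$ followed by Young's inequality---the unbounded factor $\|\bm u\|_{\bm V_\sigma}+\|\bm v\|_{\bm V_\sigma}$ appearing in \eref{contibm} being rendered harmless by the truncation---one gets a constant $\alpha_M=C(\Omega,\nu,M)$, independent of $g$, such that
\begin{equation*}
	\langle B_M\bm u-B_M\bm v,\bm u-\bm v\rangle\ge-\tfrac{\nu}{2}\|\nabla(\bm u-\bm v)\|_{\bm L^2(\Omega)}^2-\alpha_M\|\bm u-\bm v\|_{\bm H_\sigma}^2\qquad\forall\,\bm u,\bm v\in\bm V_\sigma .
\end{equation*}
On $D(\partial j_g)$ the left-hand side is the $\bm H_\sigma$-inner product $(B_M\bm u-B_M\bm v,\bm u-\bm v)_{\bm H_\sigma}$; adding the two displays shows that for $\alpha\ge\alpha_M$ and all $\bm u,\bm v\in D(\partial j_g)$, $\bm\xi\in\partial j_g(\bm u)$, $\bm\eta\in\partial j_g(\bm v)$,
\begin{equation*}
	\big(\bm\xi+B_M\bm u+\alpha\bm u-\bm\eta-B_M\bm v-\alpha\bm v,\ \bm u-\bm v\big)_{\bm H_\sigma}\ge\tfrac{\nu}{2}\|\nabla(\bm u-\bm v)\|_{\bm L^2(\Omega)}^2+(\alpha-\alpha_M)\|\bm u-\bm v\|_{\bm H_\sigma}^2\ge 0,
\end{equation*}
i.e.\ $\Gamma_{g,M}+\alpha I$ is monotone, with $\alpha_M$ depending only on $\Omega,\nu,M$.

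For (b) I would fix $\bm h\in\bm H_\sigma$ and look for $\bm u\in D(\partial j_g)$ with $(1+\alpha)\bm u+\partial j_g(\bm u)+B_M\bm u\ni\bm h$, that is, a fixed point of $T\bm w:=\big((1+\alpha)I+\partial j_g\big)^{-1}(\bm h-B_M\bm w)$. Pairing the resolvent identity with its solution and using $\bm 0\in\partial j_g(\bm 0)$ gives, for $\bm\phi\in\bm H_\sigma$, $\|((1+\alpha)I+\partial j_g)^{-1}\bm\phi\|_{\bm H_\sigma}\le(1+\alpha)^{-1}\|\bm\phi\|_{\bm L^2(\Omega)}$ and, via \eref{K-estimate}, $\|((1+\alpha)I+\partial j_g)^{-1}\bm\phi\|_{\bm H^2(\Omega)}\le C_{\rm K}\big(2\|\bm\phi\|_{\bm L^2(\Omega)}+\|g\|_{H^1(\Omega)}\big)$. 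Since by \eref{bm} the bound $\|B_M\bm w\|_{\bm H_\sigma}\le C_{\rm L}M^{3/2}\|\bm w\|_{\bm H^2(\Omega)}^{1/2}$ is \emph{sublinear} in $\|\bm w\|_{\bm H^2(\Omega)}$, for $R$ large enough (depending on $\|\bm h\|_{\bm L^2(\Omega)},\|g\|_{H^1(\Omega)},\Omega,\nu,M$) the set $\mathcal K:=\{\bm w\in\bm W\mid\|\bm w\|_{\bm H^2(\Omega)}\le R\}$ is mapped into itself by $T$. The set $\mathcal K$ is convex and, by the Rellich compactness $\bm H^2(\Omega)\hookrightarrow\hookrightarrow\bm L^2(\Omega)=\bm H_\sigma$, compact in $\bm H_\sigma$; and $T$ is continuous on $\mathcal K$ for the $\bm H_\sigma$-topology: if $\bm w_n\to\bm w$ in $\bm H_\sigma$ with $\bm w_n\in\mathcal K$, then $\bm w_n\to\bm w$ in $\bm V_\sigma$ (boundedness in $\bm H^2(\Omega)$ plus compactness), hence $B_M\bm w_n\to B_M\bm w$ in $\bm V_\sigma'$ by \eref{contibm} and, being bounded in $\bm H_\sigma$ by \eref{bm}, $B_M\bm w_n\rightharpoonup B_M\bm w$ in $\bm H_\sigma$; the $T\bm w_n$ are bounded in $\bm H^2(\Omega)$, so a subsequence converges strongly in $\bm H_\sigma$, and passing to the limit in the resolvent identity with the help of the demiclosedness of $\partial j_g$ identifies the limit as $T\bm w$, whence $T\bm w_n\to T\bm w$. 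Schauder's fixed point theorem then yields $\bm u\in\mathcal K$ with $T\bm u=\bm u$, so $R(I+\Gamma_{g,M}+\alpha I)=\bm H_\sigma$ (this step works for every $\alpha\ge 0$). Together with (a) and Minty's theorem, $\Gamma_{g,M}+\alpha I$ is maximal monotone for all $\alpha\ge\alpha_M$.

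The main obstacle is (a). Because $B_M$ is not monotone, a naive subdifferential sum rule is unavailable, and \eref{contibm}---which carries the unbounded factor $\|\bm u\|_{\bm V_\sigma}+\|\bm v\|_{\bm V_\sigma}$---does not by itself control $\langle B_M\bm u-B_M\bm v,\bm u-\bm v\rangle$ from below. The crucial point is that the monotonicity defect of $B_M$ is a lower-order quantity: after interpolation between $\bm H_\sigma$ and $\bm V_\sigma$ and a Young inequality, and crucially exploiting the truncation, it is bounded by an arbitrarily small multiple of $\|\nabla(\bm u-\bm v)\|_{\bm L^2(\Omega)}^2$ plus a term $C(\Omega,\nu,M)\|\bm u-\bm v\|_{\bm H_\sigma}^2$, so the strong dissipation coming from the quadratic part of $j_g$ together with the damping $\alpha I$ absorbs it, uniformly in $g$; these estimates are essentially those of \cite[Chapter~5]{Bar10}.
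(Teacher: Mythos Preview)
Your proof is correct. Part (a) coincides with the paper's Step~1: both combine the strong monotonicity of $\partial j_g$ in the $\|\nabla\cdot\|_{\bm L^2(\Omega)}$-seminorm with the same quadratic bound on the monotonicity defect of $B_M$ taken from \cite[Chapter~5]{Bar10}. Part (b), however, follows a genuinely different route from the paper. The paper establishes the range condition by abstract operator theory: it splits $\Gamma_{g,M}+\alpha I$ as $F_1+F_2$ with $F_1=\tfrac34\partial j_g$ and $F_2=\tfrac14\partial_* j_g+B_M+\alpha I$, shows $F_2$ is maximal monotone in $\bm H_\sigma$ via Rockafellar and Minty applied in the duality $\bm V_\sigma\text{--}\bm V_\sigma'$, and then invokes the perturbation theorem for $m$-accretive operators \cite[Proposition~3.9]{Bar10} using the bound $\|F_2^\circ\bm u\|_{\bm H_\sigma}\le\tfrac23\|F_1^\circ\bm u\|_{\bm H_\sigma}+C$, which in turn rests on the $\bm H^2$-estimate \eref{K-estimate} and the sublinear bound \eref{bm}. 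You instead run a direct Schauder fixed-point argument on the resolvent map $T\bm w=((1+\alpha)I+\partial j_g)^{-1}(\bm h-B_M\bm w)$, exploiting the very same ingredients---\eref{K-estimate} for the self-mapping and compactness of the $\bm H^2$-ball in $\bm H_\sigma$, and \eref{bm} for sublinearity---but packaged topologically rather than through $m$-accretive perturbation theory. Your argument is more elementary and self-contained; the paper's approach stays within the monotone-operator framework of \cite{Bar10} and makes the abstract structure (which reappears in the time-discretized problem) more transparent.
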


\begin{proof}
We closely follow the arguments of \cite[Lemma 5.2]{Bar10}.
We remark that, for operators in a Hilbert space, the concepts of maximal monotonicity (where its dual is identified with itself) and $m$-accretivity coincide.

\textbf{Step 1.}
By \cite[pp.\ 254--255]{Bar10} there exists a positive constant $c_M = C(\Omega, \nu, M)$ such that
\begin{equation}
	| \langle B_M \bm w -B_M \bm z, \bm w -\bm z \rangle|
	\le \frac{\nu}{8} \|\bm w -\bm z \|_{\bm V_\sigma}^2 
	+c_M\| \bm w - \bm z \|_{\bm H_\sigma}^2 
	\qquad \forall \bm w, \bm z \in \bm V_\sigma.
	\label{eq: bound}
\end{equation}
Therefore, we see from \eqref{sf}, \eqref{eq: bound}, and \eqref{eq: coercivity} that for all $\bm w^* \in ((1/8)\partial j_g + B_M + \alpha I) \bm w$, $\bm z^* \in ((1/8)\partial j_g + B_M + \alpha I) \bm z$, and $\alpha \ge \alpha_M := c_M$, there exist $\bm f \in (1/8) \partial j_g (\bm w)$ and $\bm h \in (1/8) \partial j_g (\bm z)$ such that
\begin{align*}
	(\bm w^* -\bm z^*, \bm w-\bm z)
	& =
	(\bm f -\bm h, \bm w-\bm z)
	+
	 \langle B_M \bm w -B_M \bm z, \bm w - \bm z \rangle
	+ \alpha \| \bm w - \bm z\|_{\bm H_\sigma}^2 \\
	& \ge 
	 \frac{\nu}{8} (\nabla \bm w, \nabla (\bm w - \bm z)) + \frac18 (g, |\nabla \bm w| - |\nabla \bm z|) 
	+ \frac{\nu}{8} (\nabla \bm z, \nabla (\bm z - \bm w)) + \frac18 (g, |\nabla \bm z| - |\nabla \bm w|) \notag  \\
	& \qquad
	- \frac{\nu}{8} \| \bm w - \bm  z \|_{\bm V_\sigma}^2 
	+(\alpha- c_M) \| \bm w - \bm z \|_{\bm H_\sigma}^2 
	\\
	& = (\alpha- c_M) \| \bm w - \bm z \|_{\bm H_\sigma}^2 \ge 0.
\end{align*}
This implies the monotonicity of $(1/8)\partial j_g + B_M + \alpha I$, and hence that of $\Gamma_{g, M} + \alpha I$. 

\textbf{Step 2.}
Let us consider $(1/4) \partial_{*} j_g+ B_M: \bm V_\sigma \to 2 ^{\bm V_\sigma'}$ where $\partial_{*}$ stands for the subdifferential between $\bm V_\sigma$ and $\bm V_\sigma'$. 
Recall that $\partial j_g (\bm u) \subset \partial_{*} j_g(\bm u)$ in $\bm V_\sigma'$ and that if $\bm f \in \bm H_\sigma$ and $\bm u \in \bm V_\sigma$ satisfy $\bm f \in \partial_{*} j_g(\bm u)$ then $\partial j_g (\bm u) = \partial_{*} j_g(\bm u)$ in $\bm H_\sigma$.
Now we prove that $(1/4) \partial_{*} j_g + B_M + \alpha I : \bm V_\sigma \to 2 ^{\bm V'_\sigma}$ is maximal monotone.
For this we define
\begin{equation*}
	j_1 (\bm v) = \frac{\nu}{4} \int_\Omega |\nabla \bm v|^2 \, d\bm x + \int_\Omega g |\nabla \bm v| \, d\bm x, \quad
	j_2 (\bm v) = \frac{\nu}{4} \int_\Omega |\nabla \bm v|^2 \, d\bm x, \quad D(j_1) = D(j_2) = \bm V_\sigma.
\end{equation*}
Then $(1/4) \partial_{*} j_1 :\bm V_\sigma \to 2^{\bm V_\sigma'}$ is maximal monotone with $D((1/4) \partial_* j_1)=\bm V_\sigma$.
Moreover, $(1/4) \partial_{*} j_2 + B_M + \alpha I : \bm V_\sigma \to \bm V_\sigma'$ is single-valued, monotone (by a similar computation to Step 1), and demi-continuous.
Therefore, it follows from the Rockafellar theorem \cite[Corollary 2.6]{Bar10} that the sum
\begin{equation*}
	\frac14 \partial_{*} j_g + B_M + \alpha I = \frac14 \partial_{*} j_1 + \Big( \frac14 \partial_{*} j_2 + B_M + \alpha I \Big) : \bm V_\sigma \to 2^{\bm V_\sigma'}
\end{equation*}
is maximal monotone.
Furthermore, again by a similar computation to Step 1, it is coercive.
Hence we can apply the Minty theorem \cite[Corollary 2.2]{Bar10} to find that it is surjective, i.e., $R( (1/4) \partial_{*} j_g + B_M + \alpha I) = \bm V'_\sigma$.

\textbf{Step 3.}
Let us define $F_1 : \bm H_\sigma \to 2^{\bm H_\sigma}$ and $F_2 : \bm H_\sigma \to 2^{\bm H_\sigma}$ by
\begin{alignat*}{2}
	F_1 &= \frac34 \partial j_g, \qquad && D(F_1) = D(\partial j_g) \subset \bm W \subset \bm H^2(\Omega), \\
	F_2 &= \frac14 \partial_{*} j_g+ B_M + \alpha I, \qquad && D(F_2) = \Big\{ \bm v \in \bm V_\sigma \,\Big|\, \frac14 \partial_{*} j_g(\bm v) + B_M \bm v \subset \bm H_\sigma \Big\}.
\end{alignat*}
Note that $F_2$ is the restriction of $(1/4) \partial_{*} j_g + B_M + \alpha I$ to $\bm H_\sigma$.
It is surjective as a result of Step 2, and so is $F_2 + I$.
Therefore, again by the Minty theorem, it is maximal monotone (cf.\ \cite[Exemple 2.3.7]{Bre73}).

\textbf{Step 4.}
It remains to show the maximal monotonicity of $F_1 + F_2$ with the domain $D(\partial j_g)$.
Recall that the minimal section of $F_2$ is given by
\begin{equation*}
	F_2^\circ \bm v
	:=\left\{ \bm v^*_{\rm min} \in F_2 \bm v \,\middle|\,  \|\bm v^*_{\rm min} \|_{\bm H_\sigma} 
	=\min_{\bm v^* \in F_2 \bm v} \| \bm v^* \|_{\bm H_\sigma} \right\}.
\end{equation*}
Let $\bm u \in D(\partial j_g) \subset \bm W$ and $\bm f \in \partial_{*} j_g(\bm u)=\partial j_g(\bm u)$ be arbitrary.
Then, from \eqref{bm} and \eqref{K-estimate} we obtain
\begin{align*}
	\| F_2^\circ \bm u \|_{\bm H_\sigma} 
	& \le 
	\frac14 \|\bm f \|_{\bm H_\sigma} + 
	\|B_M \bm u \|_{\bm H_\sigma} 
	+ \alpha \| \bm u \|_{\bm H_\sigma}
	\le \frac14 \|\bm f \|_{\bm H_\sigma}
	+  C_{\rm L} M^{\frac{3}{2}} 
	\|\bm u \| _{\bm H^2(\Omega)}^{\frac{1}{2}}
	+ \alpha \| \bm u\|_{\bm H_\sigma} 
	\\
	& \le \frac14 \|\bm f \|_{\bm H_\sigma}
	+ C_{\rm L} M^{\frac{3}{2}} C_{\rm K}^{\frac{1}{2}} \|\bm f \| _{\bm H_\sigma}^{\frac{1}{2}}
	+ C_{\rm L} M^{\frac{3}{2}}  C_{\rm K}^{\frac{1}{2}} \|g\|_{H^1(\Omega)}^{\frac12}
	+ \alpha \| \bm u \|_{\bm H_\sigma} \\
	& \le \frac12 \|\bm f \|_{\bm H_\sigma} 
	+ C_{\rm L}^2 M^3  C_{\rm K}
	+  C_{\rm L} M^{\frac{3}{2}}  C_{\rm K}^{\frac{1}{2}} \|g\|_{H^1(\Omega)}^{\frac12} + \alpha \| \bm u \|_{\bm H_\sigma},
\end{align*}
which implies, with some constant $C = C(M, C_{\rm L}, C_{\rm K}, \alpha, g) > 0$,
\begin{equation*}
	\|F_2^\circ \bm u \|_{\bm H_\sigma} \le \frac23 \|F_1^\circ \bm u \|_{\bm H_\sigma} + C (\|\bm u\|_{\bm H_\sigma} +1).
\end{equation*}
Therefore, we conclude from a perturbation result for $m$-accretive operators (see \cite[Proposition~3.9]{Bar10} or \cite[Corollaire~2.6]{Bre73}) that $F_1 + F_2$ is maximal monotone in $\bm H_\sigma$, especially, $D(F_1 + F_2) = D(\partial j_g)$.
\end{proof}

\subsection{Discretization in time and a priori estimates}
To show Theorems \ref{thm: evolution} and \ref{thm: evolution weak}, we construct an approximate solution of \eref{wf} by the method of discretization in time.
For this purpose, fix $M > 0$, choose a division number $N \in \mathbb N$ of the time interval $[0, T]$ to be sufficiently large (at least $N/T \ge 4\alpha_M$), and define the time increment $h_N=T/N$.
We introduce discretizations of $\bm f \in L^2(0, T; \bm L^2(\Omega))$ and $g \in W^{1,1}(0, T; L^2(\Omega)) \cap L^2(0, T; H^1_0(\Omega))$ by
\begin{equation*}
	\bm f_k = \frac{1}{h_N} \int_{(k-1) h_N}^{kh_N} \bm f(s) \, ds \in \bm L^2(\Omega) \quad\text{and}\quad g_k = \frac{1}{h_N} \int_{(k-1) h_N}^{k h_N} g(s) \, ds \in H^1_0(\Omega) \qquad (k = 1, \dots, N),
\end{equation*}
together with $g_0 := g(0) \in L^2(\Omega)$.
We also use $\bm f_0 := \bm f(0) \in \bm L^2(\Omega)$, which makes sense in the setting of \tref{thm: evolution}.
Then we set $\varphi_k := j_{g_k}$, that is,
\begin{equation*}
	\varphi_k (\bm v) = \frac\nu2 \int_\Omega |\nabla \bm v|^2 \, d \bm x + \int_\Omega g_k |\nabla \bm v| \, d \bm x, \quad D(\varphi_k) = \bm V_\sigma \qquad (k = 1, \dots, N).
\end{equation*}

Recalling that $\bm u_0 \in \bm V_\sigma$ is given as an initial datum, we consider the following differential inclusions:
\begin{equation} \label{eq: discrete-in-time inclusion}
	\frac{\bm u_k-\bm u_{k-1}}{h_N} + \partial \varphi_k (\bm u_k)  + B_M \bm u_k \ni P \bm f_k \quad\text{in }\; \bm H_\sigma \qquad (k = 1, \dots, N),
\end{equation}
which is equivalent to
\begin{equation} \label{vf}
	\Big( \frac{\bm u_k-\bm u_{k-1}}{h_N}, \bm u_k-\bm v \Big) + \nu (\nabla \bm u_k, \nabla (\bm u_k-\bm v) ) + (B_M \bm u_k, \bm u_k-\bm v) + ( g_k, |\nabla \bm u_k| -  |\nabla \bm v|) \le (\bm f_k, \bm u_k-\bm v) \quad \forall \bm v \in \bm V_\sigma.
\end{equation}
Since $\partial \varphi_k + B_M + (1/h_N)I : \bm H_\sigma \to 2^{\bm H_\sigma}$ is maximal monotone by \lref{lem: maximal monotone}, there exists $\bm u_k \in D(\partial \varphi_k) \subset \bm W \subset \bm H^2(\Omega)$ satisfying \eref{eq: discrete-in-time inclusion} for all $k = 1, \dots, N$.

\begin{lem}\label{1stest} 
	Let $\{\bm u_k\}_{k=1}^N$ be defined as above.
	Then, for $m = 1,2, \ldots, N$ we have
	\begin{align*}
		\|\bm u_m \|_{\bm H_\sigma}^2 + \sum_{k=1}^m \|\bm u_k - \bm u_{k-1}\|_{\bm H_\sigma}^2 + \nu \sum_{k=1}^m \| \bm u_k \|_{\bm V_\sigma}^2 h_N + \sum_{k=1}^m ( g_k, |\nabla \bm u_k |) h_N \le C(\| \bm u_0 \|_{\bm H_\sigma}^2 + \|\bm f\|_{L^2(0,T;\bm L^2(\Omega))}^2),
	\end{align*}
	where the constant $C = C(\Omega, \nu)$ is independent of $m$, $N$, and $M$.
\end{lem}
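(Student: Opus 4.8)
The plan is to run a standard discrete energy estimate on the variational inequality \eref{vf}, the key point being to treat the truncated convection term in a way that keeps all constants independent of $M$. First I would take the admissible test function $\bm v = \bm 0 \in \bm V_\sigma$ in \eref{vf} at each step $k$, which gives
\begin{equation*}
	\Big( \frac{\bm u_k-\bm u_{k-1}}{h_N}, \bm u_k \Big) + \nu \|\nabla \bm u_k\|_{\bm L^2(\Omega)}^2 + (B_M \bm u_k, \bm u_k) + ( g_k, |\nabla \bm u_k|) \le (\bm f_k, \bm u_k).
\end{equation*}
The crucial observation is that $(B_M \bm u_k, \bm u_k) = 0$: since $\bm u_k \in \bm V_\sigma$ we have $\operatorname{div}\bm u_k = 0$ and $\bm u_k \cdot \bm n = 0$ on $\Gamma$, so $b(\bm u_k,\bm u_k,\bm u_k) = \tfrac12\int_\Gamma(\bm u_k\cdot\bm n)|\bm u_k|^2 - \tfrac12\int_\Omega(\operatorname{div}\bm u_k)|\bm u_k|^2 = 0$, and hence $(B_M\bm u_k,\bm u_k) = c_k\, b(\bm u_k,\bm u_k,\bm u_k) = 0$ whatever the scalar $c_k \in \{1, M^2/\|\bm u_k\|_{\bm V_\sigma}^2\}$ is. Moreover $g \ge 0$ forces $g_k \ge 0$, so $(g_k,|\nabla\bm u_k|) \ge 0$ stays on the left.

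Next I would use the elementary identity $2(\bm u_k - \bm u_{k-1},\bm u_k) = \|\bm u_k\|_{\bm H_\sigma}^2 - \|\bm u_{k-1}\|_{\bm H_\sigma}^2 + \|\bm u_k - \bm u_{k-1}\|_{\bm H_\sigma}^2$, multiply the inequality by $2h_N$, and bound the right-hand side using the Poincar\'e inequality $\|\bm v\|_{\bm L^2(\Omega)} \le C\|\nabla\bm v\|_{\bm L^2(\Omega)}$ on $\bm V$ (the remark following \pref{prop: VIsigma}) together with Young's inequality, $2(\bm f_k,\bm u_k)h_N \le \nu\|\nabla\bm u_k\|_{\bm L^2(\Omega)}^2 h_N + C(\Omega,\nu)\|\bm f_k\|_{\bm L^2(\Omega)}^2 h_N$, absorbing one copy of the dissipation term. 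This yields, for each $k$,
\begin{equation*}
	\|\bm u_k\|_{\bm H_\sigma}^2 - \|\bm u_{k-1}\|_{\bm H_\sigma}^2 + \|\bm u_k - \bm u_{k-1}\|_{\bm H_\sigma}^2 + \nu\|\nabla\bm u_k\|_{\bm L^2(\Omega)}^2 h_N + 2(g_k,|\nabla\bm u_k|)h_N \le C(\Omega,\nu)\|\bm f_k\|_{\bm L^2(\Omega)}^2 h_N.
\end{equation*}

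Finally I would sum over $k = 1,\dots,m$: the first two terms telescope to $\|\bm u_m\|_{\bm H_\sigma}^2 - \|\bm u_0\|_{\bm H_\sigma}^2$; Jensen's inequality gives $\sum_{k=1}^m\|\bm f_k\|_{\bm L^2(\Omega)}^2 h_N \le \sum_{k=1}^N\|\bm f_k\|_{\bm L^2(\Omega)}^2 h_N \le \|\bm f\|_{L^2(0,T;\bm L^2(\Omega))}^2$; and the coercivity/Poincar\'e bound $\|\bm u_k\|_{\bm V_\sigma}^2 = \|\bm u_k\|_{\bm H^1(\Omega)}^2 \le C(\Omega)\|\nabla\bm u_k\|_{\bm L^2(\Omega)}^2$ converts the dissipation sum into $\nu\sum_{k=1}^m\|\bm u_k\|_{\bm V_\sigma}^2 h_N$ up to the generic constant. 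Rearranging produces the asserted estimate with $C = C(\Omega,\nu)$ independent of $m$, $N$, and $M$. I do not expect a genuine obstacle here; the only delicate point is precisely the uniform-in-$M$ handling of the truncated convection, which is resolved by the orthogonality $(B_M\bm u_k,\bm u_k) = 0$ above — testing instead with $\bm u_{k-1}$ would force the use of \eref{contibm} or \eref{bm} and introduce an unwanted $M$-dependent factor.
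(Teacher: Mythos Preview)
Your proof is correct and follows essentially the same approach as the paper: test \eref{vf} with $\bm v = \bm 0$, use $(B_M\bm u_k,\bm u_k)=0$, the polarization identity for the time-difference term, Young's inequality combined with \eref{eq: coercivity} to absorb the forcing, and sum over $k$. Your write-up is in fact more explicit than the paper's on why $(B_M\bm u_k,\bm u_k)=0$ and on the Jensen step $\sum_k\|\bm f_k\|_{\bm L^2(\Omega)}^2 h_N \le \|\bm f\|_{L^2(0,T;\bm L^2(\Omega))}^2$.
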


\begin{proof}
Taking $\bm v = \bm 0$ in \eref{vf}, using the relations (recall \eqref{eq: coercivity} as well)
\begin{gather*}
	\left( \frac{\bm u_k- \bm u_{k-1}}{h_N}, 
	\bm u_k \right)
	= \frac{1}{2 h_N} ( \| \bm u_k \|_{\bm H_\sigma}^2 
	- \| \bm u_{k-1} \|_{\bm H_\sigma}^2
	+ \| \bm u_k - \bm u_{k-1}\|_{\bm H_\sigma}^2 ), \\ 
	(B_M \bm u_k,\bm u_k)=0, \quad 
	(\bm f_{k}, \bm u_k ) \le \frac{\nu}{2} \|\nabla \bm u_k \|_{\bm L^2(\Omega)}^2 + C \| \bm f_{k} \|_{\bm L^2(\Omega)}^2,
\end{gather*}
and adding the resulting inequalities for $k = 1, \dots, m$, we obtain the desired estimate.
\end{proof}

Next we obtain an $\ell^\infty(0, T; \bm H^1(\Omega))$-a priori estimate for $\{\bm u_k\}_{k=1}^N$.
To this end we rewrite \eref{eq: discrete-in-time inclusion} into the equation
\begin{equation} \label{dee}
	\frac{\bm u_k-\bm u_{k-1}}{h_N} + \bm u_k^* + B_M \bm u_k = P \bm f_k \quad \text{with some } \bm u_k^* \in \partial \varphi_k (\bm u_k) \qquad (k = 1, \dots, N).
\end{equation}

\begin{lem}\label{2ndest}
For $m = 1, \dots, N$ we have
\begin{equation} \label{eq: 2ndest}
\begin{aligned}
	& \| \nabla \bm u _m \|_{\bm L^2(\Omega)}^2 + ( g_m, |\nabla \bm u_m | ) + \sum_{k=1}^m \| \bm u^*_k \|_{\bm H_\sigma}^2 h_N + \sum_{k=1}^m \| \bm u _{k} \|_{\bm H^2(\Omega)}^2 h_N + \sum_{k=1}^m \left\| \frac{\bm u_k-\bm u_{k-1}}{h_N} \right\|_{\bm H_\sigma}^2 h_N \\
		&\qquad \le C_M (\|\nabla\bm u_0\|_{\bm L^2(\Omega)}^2 + \|\bm f\|_{L^2(0,T;\bm L^2(\Omega))}^2 + \|g\|_{L^2(0,T; H^1(\Omega))}^2 + \|g\|_{W^{1,1}(0,T;L^2(\Omega))}^2),
\end{aligned}
\end{equation}
where the constant $C_M = C(\Omega, \nu, T, M)$ is independent of $m$ and $N$.
\end{lem}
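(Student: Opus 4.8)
The plan is to pair \eqref{dee} with $\bm u_k-\bm u_{k-1}$, sum over $k=1,\dots,m$, and close the resulting discrete energy identity by absorbing the convective contribution with the help of the stationary regularity bound \eqref{K-estimate}. Carrying out the testing, the inertial term yields $\sum_{k=1}^m\|(\bm u_k-\bm u_{k-1})/h_N\|_{\bm H_\sigma}^2h_N=:Y_m$. For the subdifferential term, the defining inequality for $\bm u_k^*\in\partial\varphi_k(\bm u_k)$ evaluated at $\bm u_{k-1}$ gives $(\bm u_k^*,\bm u_k-\bm u_{k-1})\ge\varphi_k(\bm u_k)-\varphi_k(\bm u_{k-1})$; since $\varphi_k(\bm u_{k-1})=\varphi_{k-1}(\bm u_{k-1})+\int_\Omega(g_k-g_{k-1})|\nabla\bm u_{k-1}|\,d\bm x$ (with $\varphi_0:=j_{g_0}$, $g_0:=g(0)$), a telescoping summation gives
\[
	\sum_{k=1}^m(\bm u_k^*,\bm u_k-\bm u_{k-1})\ge\varphi_m(\bm u_m)-\varphi_0(\bm u_0)-\sum_{k=1}^m\|g_k-g_{k-1}\|_{L^2(\Omega)}\|\nabla\bm u_{k-1}\|_{\bm L^2(\Omega)}.
\]
Here $\varphi_m(\bm u_m)=\tfrac\nu2\|\nabla\bm u_m\|_{\bm L^2(\Omega)}^2+(g_m,|\nabla\bm u_m|)$ recovers the first two quantities in \eqref{eq: 2ndest}; $\varphi_0(\bm u_0)$ is controlled by $\|\nabla\bm u_0\|_{\bm L^2(\Omega)}^2$ and $\|g(0)\|_{L^2(\Omega)}\le C\|g\|_{W^{1,1}(0,T;L^2(\Omega))}$; and, estimating $g_k-g_{k-1}$ through $g'$, one finds $\sum_{k=1}^m\|g_k-g_{k-1}\|_{L^2(\Omega)}\le C\|g\|_{W^{1,1}(0,T;L^2(\Omega))}$ with $C$ independent of $N$. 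Finally $|(B_M\bm u_k,\bm u_k-\bm u_{k-1})|+|(\bm f_k,\bm u_k-\bm u_{k-1})|\le(\|B_M\bm u_k\|_{\bm H_\sigma}+\|\bm f_k\|_{\bm L^2(\Omega)})\|\bm u_k-\bm u_{k-1}\|_{\bm H_\sigma}$, which after Young's inequality absorbs $\tfrac12Y_m$ while leaving $\|\bm f\|_{L^2(0,T;\bm L^2(\Omega))}^2$ and $P:=\sum_{k=1}^m\|B_M\bm u_k\|_{\bm H_\sigma}^2h_N$ on the right. Throughout, write $\mathcal D:=\|\nabla\bm u_0\|_{\bm L^2(\Omega)}^2+\|\bm f\|_{L^2(0,T;\bm L^2(\Omega))}^2+\|g\|_{L^2(0,T;H^1(\Omega))}^2+\|g\|_{W^{1,1}(0,T;L^2(\Omega))}^2$ for the right-hand side of \eqref{eq: 2ndest}.

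The crucial step is to bound $P$. I would combine \eqref{bm}, i.e.\ $\|B_M\bm u_k\|_{\bm H_\sigma}\le C_{\rm L}M^{3/2}\|\bm u_k\|_{\bm H^2(\Omega)}^{1/2}$, with \eqref{K-estimate} applied to $\bm u_k^*\in\partial\varphi_k(\bm u_k)$, which gives $\|\bm u_k\|_{\bm H^2(\Omega)}\le C_{\rm K}(\|\bm u_k^*\|_{\bm H_\sigma}+\|g_k\|_{H^1(\Omega)})$, hence $\|B_M\bm u_k\|_{\bm H_\sigma}^2\le C_{\rm L}^2M^3C_{\rm K}(\|\bm u_k^*\|_{\bm H_\sigma}+\|g_k\|_{H^1(\Omega)})$. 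Summing against $h_N$, using the Cauchy--Schwarz inequality in $k$ and the bound $\|\bm u_k^*\|_{\bm H_\sigma}\le\|\bm f_k\|_{\bm L^2(\Omega)}+\|(\bm u_k-\bm u_{k-1})/h_N\|_{\bm H_\sigma}+\|B_M\bm u_k\|_{\bm H_\sigma}$ read off from \eqref{dee}, one obtains a self-improving inequality $P\le C'(1+P^{1/2}+Y_m^{1/2}+\|\bm f\|_{L^2(0,T;\bm L^2(\Omega))}+\|g\|_{L^2(0,T;H^1(\Omega))})$ with $C'=C(\Omega,\nu,T,M)$ (the factors $T^{1/2}$ and $M^{3}$ being the only sources of $T$- and $M$-dependence); solving for $P$ and applying Young's inequality to the $P^{1/2}$ and $Y_m^{1/2}$ terms yields $P\le\tfrac18Y_m+C_M\mathcal D$. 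The same two estimates simultaneously control $\sum_k\|\bm u_k^*\|_{\bm H_\sigma}^2h_N$ and $\sum_k\|\bm u_k\|_{\bm H^2(\Omega)}^2h_N$ by $C_M Y_m+C_M\mathcal D$.

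Inserting $P\le\tfrac18Y_m+C_M\mathcal D$ into the energy identity and dropping the nonnegative term $(g_m,|\nabla\bm u_m|)$ leaves $\tfrac38Y_m+\tfrac\nu2\|\nabla\bm u_m\|_{\bm L^2(\Omega)}^2\le C\|g\|_{W^{1,1}(0,T;L^2(\Omega))}\overline{X}_m^{1/2}+C_M\mathcal D$, where $\overline{X}_m:=\max_{0\le j\le m}\|\nabla\bm u_j\|_{\bm L^2(\Omega)}^2$. Taking the maximum over $m$ (the right-hand side being nondecreasing in $m$) and absorbing $C\|g\|_{W^{1,1}}\overline{X}_m^{1/2}$ into $\tfrac\nu4\overline{X}_m$ by Young's inequality bounds $\overline{X}_m$, hence $\|\nabla\bm u_m\|_{\bm L^2(\Omega)}^2$, by $C_M\mathcal D$; feeding this back successively bounds $Y_m$, then $P$, then $\sum_k\|\bm u_k^*\|_{\bm H_\sigma}^2h_N$ and $\sum_k\|\bm u_k\|_{\bm H^2(\Omega)}^2h_N$, and finally $(g_m,|\nabla\bm u_m|)$, which completes \eqref{eq: 2ndest}. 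Every discrete sum is reduced to a Bochner norm of the data by Jensen's or the Cauchy--Schwarz inequality, so no constant depends on $m$ or $N$. The main obstacle is precisely the treatment of $P$: since $B_M$ carries a power of the $\bm H^2$-norm of $\bm u_k$ that is not a priori bounded, the estimate cannot be closed without invoking the stationary $H^2$-regularity \eqref{K-estimate} and then untangling the resulting implicit inequality for $P$ and $Y_m$; the time-regularity of $g$ enters only through $\sum_k\|g_k-g_{k-1}\|_{L^2(\Omega)}\le C\|g\|_{W^{1,1}}$, which is handled by the maximum-over-$m$ absorption.
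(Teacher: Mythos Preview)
Your argument is correct and self-contained, but it follows a different route from the paper's. The paper does not test \eqref{dee} with $\bm u_k-\bm u_{k-1}$; instead it substitutes \eqref{dee} back into the subdifferential inequality $(\bm u_{k-1}-\bm u_k,\bm u_k^*)\le\varphi_k(\bm u_{k-1})-\varphi_k(\bm u_k)$, which is tantamount to testing with $\bm u_k^*$. This produces $\|\bm u_k^*\|_{\bm H_\sigma}^2h_N$ directly on the left-hand side, together with the telescoping energy $\varphi_k(\bm u_k)-\varphi_k(\bm u_{k-1})$ and the term $(B_M\bm u_k-\bm f_k,\bm u_k^*)h_N$. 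The convective contribution is then handled in one shot via \eqref{bm} and \eqref{K-estimate}:
\[
	|(B_M\bm u_k,\bm u_k^*)|\le C_M\|\bm u_k\|_{\bm H^2(\Omega)}^{1/2}\|\bm u_k^*\|_{\bm H_\sigma}
	\le C_M\big(\|\bm u_k^*\|_{\bm H_\sigma}^{3/2}+\|g_k\|_{H^1(\Omega)}^{1/2}\|\bm u_k^*\|_{\bm H_\sigma}\big)
	\le \tfrac14\|\bm u_k^*\|_{\bm H_\sigma}^2+C_M(\|g_k\|_{H^1(\Omega)}+1),
\]
so the absorption is immediate and no auxiliary quantity like your $P$ is needed. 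After that, the paper recovers the $\bm H^2$-sum from $\|\bm u_k\|_{\bm H^2(\Omega)}\le C(\|\bm u_k^*\|_{\bm H_\sigma}+\|g_k\|_{H^1(\Omega)})$ and the difference-quotient sum from \eqref{dee}.

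Your approach instead makes $Y_m=\sum_k\|(\bm u_k-\bm u_{k-1})/h_N\|_{\bm H_\sigma}^2h_N$ the primary quantity and pays for it by having to solve the quadratic self-improving inequality $P\lesssim P^{1/2}+Y_m^{1/2}+\text{data}$. That step is sound, and the subsequent max-over-$m$ absorption mirrors the paper's. The paper's route is somewhat more economical (one Young inequality instead of an implicit estimate for $P$), while yours is perhaps closer in spirit to the standard ``test with the time derivative'' procedure for parabolic equations. Both arguments silently produce an additive constant $C_M$ on the right (from the Young inequality applied to $\|\bm u_k^*\|_{\bm H_\sigma}^{3/2}$, respectively from resolving $P\le C'P^{1/2}+\dots$), which the lemma's statement absorbs into $C_M\mathcal D$.
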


\begin{proof}
	By the definition of a subdifferential, $\varphi_k(\bm u_k) + (\bm u_{k-1} - \bm u_k, \bm u_k^*) \le \varphi_k(\bm u_{k-1})$.
	Substituting \eref{dee} gives
	\begin{equation*}
		\varphi_k(\bm u_k) + \|\bm u^*_k \|_{\bm H_\sigma}^2 h_N + (B_M\bm u_k - \bm f_k, \bm u_k^*)h_N  \le \varphi_k(\bm u_{k-1}),
	\end{equation*}
	so that, for $k = 1, \dots, N$,
	\begin{equation} \label{eq2: proof of 2ndest}
	\begin{aligned}
		&\frac\nu2 (\|\nabla \bm u_k\|_{L^2(\Omega)}^2 - \|\nabla \bm u_{k-1}\|_{L^2(\Omega)}^2) + (g_k, |\nabla \bm u_k|) - (g_{k-1}, |\nabla \bm u_{k-1}|) + \|\bm u^*_k \|_{\bm H_\sigma}^2 h_N \\
		\le \; &(\bm f_k - B_M\bm u_k, \bm u_k^*)h_N + (g_k - g_{k-1}, |\nabla \bm u_{k-1}|) \\
		\le \; &\frac14 \|\bm u^*_k \|_{\bm H_\sigma}^2 h_N + \|\bm f\|_{L^2((k-1) h_N, k h_N; \bm L^2(\Omega))}^2 - (B_M\bm u_k, \bm u_k^*)h_N + (g_k - g_{k-1}, |\nabla \bm u_{k-1}|),
	\end{aligned}
	\end{equation}
	where we have used $\|\bm f_k\|_{\bm L^2(\Omega)} \le (1/\sqrt{h_N}) \|\bm f\|_{L^2((k-1) h_N, k h_N; \bm L^2(\Omega))}$.
	Since $\bm u_k^* \in \partial \varphi_k(\bm u_k)$ implies $\|\bm u_k\|_{\bm H^2(\Omega)} \le C (\|\bm u_k^*\|_{\bm H_\sigma} + \|g_k\|_{H^1(\Omega)})$, cf.\ \eref{K-estimate}, we find from \eref{bm} that
	\begin{align*}
		|(B_M\bm u_k, \bm u_k^*)| &\le C_M \|\bm u_k\|_{\bm H^2(\Omega)}^{1/2} \|\bm u_k^*\|_{\bm H_\sigma} \le C_M (\|\bm u_k^*\|_{\bm H_\sigma}^{3/2} + \|g_k\|_{H^1(\Omega)}^{1/2} \|\bm u_k^*\|_{\bm H_\sigma}) \\
			&\le \frac14 \|\bm u^*_k \|_{\bm H_\sigma}^2 + C_M \Big( \frac1{\sqrt{h_N}} \|g\|_{L^2((k-1)h_N, kh_N; H^1(\Omega))} + 1 \Big),
	\end{align*}
	where we have used $\|g_k\|_{H^1(\Omega)} \le (1/\sqrt{h_N}) \|g\|_{L^2((k-1)h_N, kh_N; H^1(\Omega))}$.
	Finally, by direct computation one has
	\begin{align*}
		(g_k - g_{k-1}, |\nabla \bm u_{k-1}|) &= \frac1{h_N} \int_{(k-1)h_N}^{kh_N} \int_{\min\{0, s-h_N\}}^s (g'(r), |\nabla \bm u_{k-1}|) \, dr \, ds \\
			&\le \|g'\|_{L^1(\min\{(k-2)h_N, 0\}, kh_N; L^2(\Omega))} \|\nabla \bm u_{k-1}\|_{\bm L^2(\Omega)}.
	\end{align*}
	Substituting the two estimates above into \eref{eq2: proof of 2ndest} and absorbing $\|\bm u^*_k \|_{\bm H_\sigma}^2 h_N$ to the left-hand side, we see that
	\begin{align*}
		&\frac\nu2 (\|\nabla \bm u_k\|_{L^2(\Omega)}^2 - \|\nabla \bm u_{k-1}\|_{L^2(\Omega)}^2) + (g_k, |\nabla \bm u_k|) - (g_{k-1}, |\nabla \bm u_{k-1}|) + \frac12 \|\bm u^*_k \|_{\bm H_\sigma}^2 h_N \\
		\le \; &C_M (\|\bm f\|_{L^2((k-1) h_N, k h_N; \bm L^2(\Omega))}^2 + \|g\|_{L^2((k-1)h_N, kh_N; H^1(\Omega))} h_N^{1/2} + h_N ) \\
			&\qquad + \|g'\|_{L^1(\min\{(k-2)h_N, 0\}, kh_N; L^2(\Omega))} \|\nabla \bm u_{k-1}\|_{\bm L^2(\Omega)}.
	\end{align*}
	
	Now, if $m := \operatorname{argmax}_{k=0, \dots, N} \|\nabla \bm u_k\|_{\bm L^2(\Omega)} \ge 1$, then we add the above estimate for $k = 1, \dots, m$ to obtain
	\begin{equation} \label{eq1: proof of 2ndest}
	\begin{aligned}
		&\frac\nu2 (\|\nabla \bm u_m\|_{L^2(\Omega)}^2 - \|\nabla \bm u_0\|_{L^2(\Omega)}^2) + (g_m, |\nabla \bm u_m|) - (g_0, |\nabla \bm u_0|) + \frac12 \sum_{k=1}^m \|\bm u^*_k \|_{\bm H_\sigma}^2 h_N \\
			\le \; &C_M (\|\bm f\|_{L^2(0, T; \bm L^2(\Omega))}^2 + \|g\|_{L^2(0, T; H^1(\Omega))}^2 + T ) + \|g'\|_{L^1(0, T; L^2(\Omega))} \|\nabla \bm u_{m}\|_{\bm L^2(\Omega)} \\
			\le \; &C_M (\|\bm f\|_{L^2(0, T; \bm L^2(\Omega))}^2 + \|g\|_{L^2(0, T; H^1(\Omega))}^2 + \|g'\|_{L^1(0, T; L^2(\Omega))}^2 + T ) + \frac\nu4 \|\nabla \bm u_{m}\|_{\bm L^2(\Omega)}^2,
	\end{aligned}
	\end{equation}
	which gives us a bound of $\max_{k=0, \dots, N} \|\nabla \bm u_k\|_{\bm L^2(\Omega)}$ independent of $N$ (note: $\|g_0\|_{L^2(\Omega)} \le C(\Omega, T) \|g\|_{W^{1,1}(0, T; L^2(\Omega))}$).
	A similar computation to \eref{eq1: proof of 2ndest}, in which $m$ is replaced by an arbitrary integer, proves \eref{eq: 2ndest} for the first 3 terms on the left-hand side.
	The fourth and fifth terms can be addressed by
	\begin{align*}
		\|\bm u_k\|_{\bm H^2(\Omega)}^2 &\le C \Big( \|\bm u_k^*\|_{\bm H_\sigma}^2 + \frac1{h_N} \|g\|_{L^2((k-1)h_N, kh_N; H^1(\Omega))}^2 \Big), \\
		\frac{\bm u_k-\bm u_{k-1}}{h_N} &= P \bm f_k - \bm u_k^* - B_M \bm u_k, \quad \|B_M \bm u_k\|_{\bm H_\sigma}^2 \le C_M (\|\bm u_k^*\|_{\bm H_\sigma} + \|g_k\|_{H^1(\Omega)}),
	\end{align*}
	which completes the proof of the lemma.
\end{proof}

Finally we derive an $H^1(0, T; \bm H^1(\Omega))$-like a priori estimate for $\{\bm u_k\}_{k=1}^N$ to show \tref{thm: evolution}.
\begin{lem} \label{3rdest}
	Under the assumptions of \tref{thm: evolution}, let $M \ge \|\bm u_0\|_{\bm V_\sigma}$.
	Then, for $m = 1, \dots, N$ we have
	\begin{align*}
		\left\| \frac{\bm u_{m}-\bm u_{m-1}}{h_N} \right\|_{\bm H_\sigma}^2
			+ \sum_{k=1}^m \left\|\nabla \frac{\bm u_{k} - \bm u_{k-1}}{h_N} \right\|_{\bm L^2(\Omega)}^2 h_N
		&\le \; C_M (\|\bm u_0\|_{\bm V_\sigma}^2 + \|\bm f\|_{W^{1,1}(0, T; \bm L^2(\Omega))}^2 + \|g\|_{H^1((0, T) \times \Omega)}^2 + \|\bm f^*\|_{\bm H_\sigma}^2),
	\end{align*}
	where the constant $C_M = C(\Omega, \nu, T, M)$ is independent of $m$ and $N$.
\end{lem}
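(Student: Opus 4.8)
The plan is to adapt the argument of \cite[Theorem~5.9]{Bar10}: test the difference of two consecutive discrete equations \eqref{dee} by the discrete time-derivative, derive a discrete Gronwall inequality, and control the first time step separately through the compatibility condition \eqref{initialc}. Writing $\bm\delta_k := (\bm u_k - \bm u_{k-1})/h_N$, I would, for $2 \le k \le N$, subtract \eqref{dee} at step $k-1$ from that at step $k$, take the $\bm H_\sigma$-inner product with $\bm\delta_k$, multiply by $h_N$, and sum over $k = 2, \dots, m$; the discrete time-derivative term then telescopes into $\tfrac12\|\bm\delta_m\|_{\bm H_\sigma}^2 - \tfrac12\|\bm\delta_1\|_{\bm H_\sigma}^2 + \tfrac12\sum_{k=2}^m\|\bm\delta_k - \bm\delta_{k-1}\|_{\bm H_\sigma}^2$, so that everything reduces to estimating the contributions of $\partial\varphi_k$, $B_M$, and $\bm f$ and to bounding $\|\bm\delta_1\|_{\bm H_\sigma}$ a priori.

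For the $\partial\varphi_k$-term I would decompose $\bm u_k^* = -\nu\Delta\bm u_k + \bm\mu_k$, where, by the sum rule and $\bm u_k \in \bm W$, $\bm\mu_k$ lies in the subdifferential at $\bm u_k$ of the convex functional $\bm v\mapsto\int_\Omega g_k|\nabla\bm v|\,d\bm x$; integration by parts with $(\nabla\bm u_k\,\bm n)_\tau = \bm 0$ gives $(-\nu\Delta\bm u_k + \nu\Delta\bm u_{k-1}, \bm u_k - \bm u_{k-1}) = \nu\|\nabla(\bm u_k - \bm u_{k-1})\|_{\bm L^2(\Omega)}^2$, while adding the subdifferential inequalities for $\bm\mu_k$ and $\bm\mu_{k-1}$ yields $(\bm\mu_k - \bm\mu_{k-1}, \bm u_k - \bm u_{k-1}) \ge (g_k - g_{k-1}, |\nabla\bm u_k| - |\nabla\bm u_{k-1}|) \ge -\|g_k - g_{k-1}\|_{L^2(\Omega)}\|\nabla(\bm u_k - \bm u_{k-1})\|_{\bm L^2(\Omega)}$. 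Thus, after scaling by $h_N$ and summing, the $\partial\varphi_k$-term supplies the coercive quantity $\nu\sum_{k=2}^m h_N\|\nabla\bm\delta_k\|_{\bm L^2(\Omega)}^2$ minus an error that Young's inequality absorbs, since $\sum_{k=2}^m h_N^{-1}\|g_k - g_{k-1}\|_{L^2(\Omega)}^2 \le 4\|g'\|_{L^2(0,T;L^2(\Omega))}^2 \le 4\|g\|_{H^1((0,T)\times\Omega)}^2$, which follows from the elementary bound $\|g_k - g_{k-1}\|_{L^2(\Omega)}^2 \le 2h_N\int_{(k-2)h_N}^{kh_N}\|g'(s)\|_{L^2(\Omega)}^2\,ds$ (write $g_k - g_{k-1}$ as an iterated integral of $g'$ and use Cauchy--Schwarz). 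The $B_M$-term is where the truncation is indispensable: by \eqref{eq: bound} one has $|(B_M\bm u_k - B_M\bm u_{k-1}, \bm u_k - \bm u_{k-1})| \le \tfrac{\nu}{8}\|\nabla(\bm u_k - \bm u_{k-1})\|_{\bm L^2(\Omega)}^2 + c_M\|\bm u_k - \bm u_{k-1}\|_{\bm H_\sigma}^2$ with $c_M$ independent of $N$, so its contribution is absorbed into the coercive term up to the Gronwall remainder $c_M\sum_{k=2}^m h_N\|\bm\delta_k\|_{\bm H_\sigma}^2$. Finally $\sum_{k=2}^m(\bm f_k - \bm f_{k-1}, \bm\delta_k) \le 2\|\bm f'\|_{L^1(0,T;\bm L^2(\Omega))}\max_{k\le m}\|\bm\delta_k\|_{\bm H_\sigma}$, which, after taking the supremum over $m$ and applying Young, costs only $\tfrac12\max_k\|\bm\delta_k\|_{\bm H_\sigma}^2 + C\|\bm f\|_{W^{1,1}(0,T;\bm L^2(\Omega))}^2$. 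Collecting these bounds, discarding the nonnegative sum $\sum\|\bm\delta_k - \bm\delta_{k-1}\|_{\bm H_\sigma}^2$, and invoking the discrete Gronwall lemma (legitimate since $N$ is chosen with $h_N \le 1/(4\alpha_M)$) gives the asserted estimate, provided $\|\bm\delta_1\|_{\bm H_\sigma}$ is under control.

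To estimate $\|\bm\delta_1\|_{\bm H_\sigma}$ I would exploit \eqref{initialc} together with the standing hypothesis $M \ge \|\bm u_0\|_{\bm V_\sigma}$, which guarantees $B_M\bm u_0 = B\bm u_0$. Testing \eqref{vf} at $k = 1$ with $\bm v = \bm u_0$, and \eqref{initialc} — rewritten, using \eqref{initialc1}, with $\nu(\nabla\bm u_0, \nabla(\bm v - \bm u_0))$ in place of $(-\nu\Delta\bm u_0, \bm v - \bm u_0)$ — with $\bm v = \bm u_1$, then adding the two, the diffusion terms combine into $\nu\|\nabla(\bm u_1 - \bm u_0)\|_{\bm L^2(\Omega)}^2$, and one is left with $h_N\|\bm\delta_1\|_{\bm H_\sigma}^2 + \nu\|\nabla(\bm u_1 - \bm u_0)\|_{\bm L^2(\Omega)}^2$ dominated by $-(B_M\bm u_1 - B_M\bm u_0, \bm u_1 - \bm u_0) - (g_1 - g_0, |\nabla\bm u_1| - |\nabla\bm u_0|) + (\bm f_1 - \bm f(0) - \bm f^*, \bm u_1 - \bm u_0)$. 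Bounding the first term by \eqref{eq: bound}, the second by $\|g_1 - g_0\|_{L^2(\Omega)}\|\nabla(\bm u_1 - \bm u_0)\|_{\bm L^2(\Omega)}$ with $\|g_1 - g_0\|_{L^2(\Omega)}^2 \le h_N\|g\|_{H^1((0,T)\times\Omega)}^2$, and the third by $(\|\bm f'\|_{L^1(0,h_N;\bm L^2(\Omega))} + \|\bm f^*\|_{\bm H_\sigma})\,h_N\|\bm\delta_1\|_{\bm H_\sigma}$, then dividing by $h_N$ and absorbing the small terms (again using $h_N \le 1/(4\alpha_M)$), one arrives at $\|\bm\delta_1\|_{\bm H_\sigma}^2 \le C_M(\|\bm u_0\|_{\bm V_\sigma}^2 + \|\bm f\|_{W^{1,1}(0,T;\bm L^2(\Omega))}^2 + \|g\|_{H^1((0,T)\times\Omega)}^2 + \|\bm f^*\|_{\bm H_\sigma}^2)$, which together with the previous step completes the proof.

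I expect the main difficulty to be twofold. First, the convection contribution must stay uniform in the time step $N$: the crude Lipschitz bound \eqref{contibm} would introduce a factor $\|\bm u_k\|_{\bm V_\sigma} + \|\bm u_{k-1}\|_{\bm V_\sigma}$, bounded only in an $M$-dependent way through \lref{2ndest} and hence not absorbable into the diffusion, so the sharper estimate \eqref{eq: bound}, which genuinely uses the structure of the truncated operator $B_M$, is what makes the scheme go through. Second, the time discretization of the yield stress forces one to handle the ``commutator'' terms $(g_k - g_{k-1}, |\nabla\bm u_k| - |\nabla\bm u_{k-1}|)$ (and its analogue at $k=1$), and the decisive point is that the $H^1$-in-time regularity of $g$ — via the discrete estimate $\|g_k - g_{k-1}\|_{L^2(\Omega)}^2 \le 2h_N\int_{(k-2)h_N}^{kh_N}\|g'(s)\|_{L^2(\Omega)}^2\,ds$ — is exactly what is needed and sufficient to control them, so that no pointwise Lipschitz continuity in time of $g$ (as the abstract theory would require) is needed.
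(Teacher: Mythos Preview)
Your proposal is correct and follows essentially the same line as the paper's proof: subtract consecutive time steps, test with the discrete time-derivative, extract coercivity from the diffusion, control $B_M$ via the structural bound \eqref{eq: bound}, handle the $g$-commutator through $H^1$-in-time regularity, and treat $k=1$ separately via the compatibility condition \eqref{initialc} combined with $M \ge \|\bm u_0\|_{\bm V_\sigma}$.

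The only noteworthy difference is packaging. The paper absorbs $B_M$ into a monotone operator $F := (1/4)\partial j_2 + B_M + \alpha_M I$ and works at the level of the variational inequality \eqref{vf} (rewritten as \eqref{eq2: 3rdest}), testing the $k$-th and $(k{-}1)$-th inequalities against each other. You instead work with the equation form \eqref{dee}, split $\bm u_k^* = -\nu P\Delta\bm u_k + \bm\mu_k$ via the sum rule, and handle $B_M$ separately through \eqref{eq: bound}. Both routes yield the same key inequality; the paper's formulation avoids justifying the sum rule (which here requires passing through the $\bm V_\sigma$--$\bm V_\sigma'$ subdifferential and using $\bm u_k \in \bm W$ to pull back to $\bm H_\sigma$), while your formulation makes the origin of the coercive term $\nu\|\nabla(\bm u_k - \bm u_{k-1})\|_{\bm L^2(\Omega)}^2$ more transparent. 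Also, where you invoke discrete Gronwall to dispose of $c_M\sum h_N\|\bm\delta_k\|_{\bm H_\sigma}^2$, the paper simply bounds that sum by the already-established \lref{2ndest} and then uses the argmax trick from \eqref{eq1: proof of 2ndest}; both closures are valid. One small slip: in your first paragraph the phrase ``multiply by $h_N$'' is superfluous---taking the inner product with $\bm\delta_k$ (not $h_N\bm\delta_k$) is what produces the telescoping sum $\tfrac12\|\bm\delta_m\|_{\bm H_\sigma}^2 - \tfrac12\|\bm\delta_1\|_{\bm H_\sigma}^2$ and the coercive term $\nu\sum h_N\|\nabla\bm\delta_k\|_{\bm L^2(\Omega)}^2$ you state.
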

\begin{proof}
	For the functional $j_2$ defined in the Step 2 of the proof of \lref{lem: maximal monotone}, we see that $\partial j_2 = - (\nu/2) P \Delta$ and that $F := (1/4) \partial j_2 + B_M + \alpha_M I$ is monotone. 
	In addition, observe that \eref{vf} is rewritten as
	\begin{equation} \label{eq2: 3rdest}
	\begin{aligned}
		&\Big( \frac{\bm u_k - \bm u_{k-1}}{h_N}, \bm u_k - \bm v \Big) + \frac{7\nu}{8} (\nabla \bm u_k, \nabla (\bm u_k - \bm v)) + (F\bm u_k, \bm u_k - \bm v) + (g_k, |\nabla \bm u_k| - |\nabla \bm v|) \\
			\le \; &(\bm f_k, \bm u_k - \bm v) + \alpha_M (\bm u_k, \bm u_k - \bm v) \qquad \forall \bm v \in \bm V_\sigma, \quad k = 1, \dots, N.
	\end{aligned}
	\end{equation}
	Taking $\bm v = \bm u_{k-1}$ in \eref{eq2: 3rdest}, taking $\bm v = \bm u_k$ in \eref{eq2: 3rdest} with $k$ replaced by $k-1$, adding the two inequalities, and applying the monotonicity of $F$, we obtain for $k=2, \dots, N$
	\begin{align*}
		&\frac{\|\bm u_k - \bm u_{k-1}\|_{\bm H_\sigma}^2 - \|\bm u_{k-1} - \bm u_{k-2}\|_{\bm H_\sigma}^2}{2h_N} + \frac{\|\bm u_k - \bm u_{k-2}\|_{\bm H_\sigma}^2}{2h_N} + \frac{7\nu}{8} \|\nabla (\bm u_k - \bm u_{k-1})\|_{\bm L^2(\Omega)}^2 \\
		\le \; &(\bm f_k - \bm f_{k-1}, \bm u_k - \bm u_{k-1}) + \alpha_M \|\bm u_k - \bm u_{k-1}\|_{\bm H_\sigma}^2 - (g_k - g_{k-1}, |\nabla \bm u_k| - |\nabla \bm u_{k-1}|) \\
		\le \; & \|\bm f_k - \bm f_{k-1}\|_{\bm L^2(\Omega)} \|\bm u_k - \bm u_{k-1}\|_{\bm H_\sigma} + \alpha_M \|\bm u_k - \bm u_{k-1}\|_{\bm H_\sigma}^2 + C \|g_k - g_{k-1}\|_{L^2(\Omega)}^2 + \frac{3\nu}{8} \|\nabla (\bm u_k - \bm u_{k-1})\|_{\bm L^2(\Omega)}^2.
	\end{align*}
	Therefore,
	\begin{equation} \label{eq1: 3rdest}
	\begin{aligned}
		&\Big\| \frac{\bm u_k - \bm u_{k-1}}{h_N} \Big\|_{\bm H_\sigma}^2 - \Big\| \frac{\bm u_{k-1} - \bm u_{k-2}}{h_N} \Big\|_{\bm H_\sigma}^2 + \nu \Big\| \nabla \frac{\bm u_k - \bm u_{k-1}}{h_N} \Big\|_{\bm L^2(\Omega)}^2 h_N \\
			\le \; & 2 \|\bm f'\|_{L^1((k-2)h_N, kh_N; \bm L^2(\Omega))}\Big\| \frac{\bm u_k - \bm u_{k-1}}{h_N} \Big\|_{\bm H_\sigma} + 2\alpha_M \Big\| \frac{\bm u_k - \bm u_{k-1}}{h_N} \Big\|_{\bm H_\sigma}^2 h_N + 2C \|g'\|_{L^2((k-2)h_N, kh_N; L^2(\Omega))}^2,
	\end{aligned}
	\end{equation}
	where $\|\bm f_k - \bm f_{k-1}\|_{\bm L^2(\Omega)} \le \|\bm f'\|_{L^1((k-2)h_N, kh_N; \bm L^2(\Omega))}$ and $\|g_k - g_{k-1}\|_{\bm L^2(\Omega)} \le \sqrt{h_N} \|g'\|_{L^2((k-2)h_N, kh_N; \bm L^2(\Omega))}$ are used.
	Now we add \eref{eq1: 3rdest} for $k = 2, \dots, m$, argue similarly as in \eref{eq1: proof of 2ndest}, and then substitute \eref{eq: 2ndest} to arrive at
	\begin{equation} \label{eq3: 3rdest}
	\begin{aligned}
		&\Big\| \frac{\bm u_m - \bm u_{m-1}}{h_N} \Big\|_{\bm H_\sigma}^2 + \nu \sum_{k=2}^m \Big\| \nabla \frac{\bm u_k - \bm u_{k-1}}{h_N} \Big\|_{\bm L^2(\Omega)}^2 h_N \\
			\le \; &\Big\| \frac{\bm u_1 - \bm u_{0}}{h_N} \Big\|_{\bm H_\sigma}^2 + C_M (\|\bm u_0\|_{\bm V_\sigma}^2 + \|\bm f\|_{W^{1,1}(0, T; \bm L^2(\Omega))}^2 + \|g\|_{H^1((0, T) \times \Omega)}^2) \qquad (m=2, \dots, N).
	\end{aligned}
	\end{equation}
	
	On the other hand, recall that \eref{initialc} holds, in which we have $B = B_M$ since $M \ge \|\bm u_0\|_{\bm V_\sigma}$.
	Then it follows that (recall that $\bm f_0$ means $\bm f(0)$)
	\begin{equation*}
		\frac{7\nu}{8} (\nabla \bm u_0, \nabla (\bm u_0 - \bm v)) + (F\bm u_0, \bm u_0 - \bm v) + (g_0, |\nabla \bm u_0| - |\nabla \bm v|) \le (\bm f_0 + \bm f^* + \alpha_M \bm u_0, \bm u_0 - \bm v) \quad \forall \bm v \in \bm V_\sigma.
	\end{equation*}
	Taking $\bm v = \bm u_1$ above, taking $\bm v = \bm u_0$ in \eref{eq2: 3rdest} for $k = 1$, adding the two inequalities, and dividing the both sides by $h_N$, we derive (recall $N$ is chosen so large that $h_N \alpha_M \le 1/4$)
	\begin{equation} \label{eq4: 3rdest}
	\begin{aligned}
		&\Big\|\frac{\bm u_1 - \bm u_0}{h_N} \Big\|_{\bm H_\sigma}^2 + \frac{7\nu}{8} \Big\| \nabla \frac{\bm u_1 - \bm u_0}{h_N} \Big\|_{\bm L^2(\Omega)}^2 h_N \\
			\le \;& \Big( \bm f_1 - \bm f_0 - \bm f^*, \frac{\bm u_1 - \bm u_0}{h_N} \Big) + h_N \alpha_M \Big\|\frac{\bm u_1 - \bm u_0}{h_N} \Big\|_{\bm H_\sigma}^2 - \frac1{h_N} (g_1 - g_0, |\nabla \bm u_1| - |\nabla \bm u_0|) \\
			\le \;& C ( \|\bm f'\|_{L^1(0, h_N; \bm L^2(\Omega))}^2 + \|\bm f^*\|_{\bm H_\sigma}^2) + \frac12 \Big\|\frac{\bm u_1 - \bm u_0}{h_N} \Big\|_{\bm H_\sigma}^2
				+ C \|g'\|_{L^2(0, h_N; \bm L^2(\Omega))}^2 + \frac{3\nu}{8} \Big\| \nabla \frac{\bm u_1 - \bm u_0}{h_N} \Big\|_{\bm L^2(\Omega)}^2 h_N.
	\end{aligned}
	\end{equation}
	Combining \eref{eq3: 3rdest} and \eref{eq4: 3rdest} concludes the desired estimate.
\end{proof}

\subsection{Limiting procedure as $N \to \infty$} \label{subsec: limit in N}
We continue to deal with $\{\bm u_k\}_{k=0}^N$ constructed in the previous subsection.
Let us introduce a piecewise-linear interpolation $\hat{\bm u}_N:[0,T] \to \bm H_\sigma$ and piecewise-constant interpolations $\bar{\bm u}_N:(0,T] \to \bm H_\sigma$ and $\bar{\bm f}_N:(0,T] \to \bm H_\sigma$ as follows: for $k=1, \dots, N$,
\begin{alignat*}{2}
	\hat{\bm u}_N(t)
	& = \frac{kh_N - t}{h_N} \bm u_{k-1} + \frac{t - (k-1)h_N}{h_N}\bm u_{k}
	\qquad && {\rm for~} t \in [(k-1)h_N, kh_N ], \\
	\bar{\bm u}_N(t) 
	& = \bm u_{k}, 
	\quad 
	\bar{\bm f}_N(t) =\bm f_k \qquad && {\rm for~} t \in ( (k-1) h_N, kh_N ].
\end{alignat*}
Define $\Phi_N, \Phi:L^2(0,T; \bm V_\sigma) \to \mathbb{R}$ by  
\begin{equation*}
	\Phi_N(\bm \upsilon ) = \sum_{k=1}^N \int_{(k-1)h_N}^{k h_N} \varphi_k ( \bm \upsilon (t) ) \, dt, \qquad
	\Phi(\bm \upsilon ) = \int_0^T \varphi^t ( \bm \upsilon (t) ) \, dt,
\end{equation*}
where $\varphi^t := j_{g(t)}$, that is, $\varphi^t(\bm v) = (\nu/2)\|\nabla \bm v\|_{\bm L^2(\Omega)}^2 + (g(t), |\nabla \bm v|)$ for each $t \in [0, T]$.
By direct computation, we see that $\Phi_N(\bm \upsilon ) \to \Phi(\bm \upsilon )$ as $N \to \infty$ for $\bm \upsilon  \in L^2(0,T;\bm V_\sigma)$
and that $|\Phi_N(\bm \upsilon_1) - \Phi_N(\bm \upsilon_2)| \le \|g\|_{L^2(0, T; L^2(\Omega))} \|\bm\upsilon_1 - \bm\upsilon_2\|_{L^2(0, T; \bm V_\sigma)}$.

Now we observe that \eref{vf} can be reformulated as the following space-time variational inequality:
\begin{equation} \label{0Tvf}
\begin{aligned}
	\int_0^T ( 
	\hat{\bm u}_N'(t), \bm \upsilon(t)-
	\bar{\bm u}_N(t) 
	) dt 
	+ \int_0^T 
	( B_M \bar{\bm u}_N(t), \bm \upsilon(t)-
	\bar{\bm u}_N(t) 
	) dt
	+ \Phi_N (\bm \upsilon)
	- \Phi_N ( \bar{\bm v}_N )
	\ge 
	\int_0^T ( 
	\bar{\bm f}_{N}(t), \bm \upsilon(t)-
	\bar{\bm u}_N(t) 
	) dt
\end{aligned}
\end{equation}
for all $\bm \upsilon \in L^2(0,T;\bm V_\sigma)$.
From this we construct a solution of \eref{wf} in which $B$ is replaced with its truncation $B_M$.
The following result is an extension of \cite[Proposition 5.15]{Bar10} to a time-dependent nonlinear multivalued operator $\partial \varphi^t$.
\begin{prop} \label{prop: sub}
	Under the assumptions in \tref{thm: evolution}, there exists at least one $\bm u \in W^{1,\infty}(0,T; \bm H_\sigma)\cap H^1(0,T; \bm V_\sigma) \cap L^\infty (0,T; \bm W)$ such that $\bm u(0)=\bm u_0$ and 
	\begin{equation} \label{vfa}
		 (\bm u'(t), \bm v -\bm u(t)) - \nu (\Delta \bm u(t), \bm v -\bm u(t)) + ( B_M \bm u(t), \bm v - \bm u(t)) + (g(t), |\nabla \bm v| - |\nabla \bm u(t)| ) \ge ( \bm f(t), \bm v - \bm u(t) )
	\end{equation}
	for all $\bm v \in \bm V_\sigma$ and a.a.\ $t \in (0, T)$.
\end{prop}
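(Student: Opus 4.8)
The plan is to pass to the limit $N \to \infty$ in the time-discrete scheme \eref{eq: discrete-in-time inclusion}, equivalently in the space--time variational inequality \eref{0Tvf}, using the uniform a priori estimates of Lemmas \lref{1stest}--\lref{3rdest} (for \lref{3rdest} we take $M \ge \|\bm u_0\|_{\bm V_\sigma}$, which is harmless in the eventual application) together with a compactness argument. First I would collect the bounds: \lref{3rdest} shows that $\hat{\bm u}_N'$ is bounded in $L^\infty(0, T; \bm H_\sigma) \cap L^2(0, T; \bm V_\sigma)$, while rearranging \eref{dee} as $\bm u_k^* = P\bm f_k - (\bm u_k - \bm u_{k-1})/h_N - B_M \bm u_k$ and combining with \eref{bm}, \eref{K-estimate} and a Young-inequality absorption (using $\|g_k\|_{H^1(\Omega)} \le \|g\|_{L^\infty(0,T;H^1(\Omega))}$ and $\|\bm f_k\|_{\bm L^2(\Omega)} \le \|\bm f\|_{C([0,T];\bm L^2(\Omega))}$) gives $\sup_k (\|\bm u_k^*\|_{\bm H_\sigma} + \|\bm u_k\|_{\bm H^2(\Omega)}) \le C_M$, so $\bar{\bm u}_N$ is bounded in $L^\infty(0, T; \bm H^2(\Omega))$ with values in the closed subspace $\bm W$. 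Passing to a subsequence, $\hat{\bm u}_N \rightharpoonup \bm u$ weakly-$*$ in $W^{1,\infty}(0, T; \bm H_\sigma)$ and weakly in $H^1(0, T; \bm V_\sigma)$, while $\bar{\bm u}_N$ converges weakly-$*$ in $L^\infty(0, T; \bm H^2(\Omega))$ to a limit that, as we check next, equals $\bm u$.

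Next I would identify and strengthen these convergences. Since $\hat{\bm u}_N$ is bounded in $L^\infty(0, T; \bm H^2(\Omega))$ and uniformly Lipschitz in $t$ with values in $\bm H_\sigma$, Arzel\`a--Ascoli (exploiting $\bm H^2(\Omega) \hookrightarrow\hookrightarrow \bm H_\sigma$) yields $\hat{\bm u}_N \to \bm u$ in $C([0,T]; \bm H_\sigma)$; together with $\|\hat{\bm u}_N - \bar{\bm u}_N\|_{L^\infty(0,T;\bm H_\sigma)} \le h_N \sup_k \|(\bm u_k - \bm u_{k-1})/h_N\|_{\bm H_\sigma} \to 0$ this gives $\bar{\bm u}_N \to \bm u$ in $L^\infty(0, T; \bm H_\sigma)$ (so the weak-$*$ limit in $L^\infty(0,T;\bm H^2(\Omega))$ is indeed $\bm u$). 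Interpolating $\|\bm w\|_{\bm V_\sigma} \le C\|\bm w\|_{\bm H_\sigma}^{1/2}\|\bm w\|_{\bm H^2(\Omega)}^{1/2}$ and invoking the $L^2(0,T;\bm H^2(\Omega))$-bound of \lref{2ndest} upgrades this to $\bar{\bm u}_N \to \bm u$ strongly in $L^2(0, T; \bm V_\sigma)$. Finally $\bm u(0) = \lim_N \hat{\bm u}_N(0) = \bm u_0$, and the weak limits together with closedness of $\bm W$ give $\bm u \in W^{1,\infty}(0, T; \bm H_\sigma) \cap H^1(0, T; \bm V_\sigma) \cap L^\infty(0, T; \bm W)$, as required.

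Then I would pass to the limit term-by-term in \eref{0Tvf} for a fixed $\bm\upsilon \in L^2(0, T; \bm V_\sigma)$. The duality term converges since $\hat{\bm u}_N' \rightharpoonup \bm u'$ weakly in $L^2(0,T;\bm H_\sigma)$ and $\bm\upsilon - \bar{\bm u}_N \to \bm\upsilon - \bm u$ strongly there; the convection term converges because, by \eref{contibm} and the strong convergence $\bar{\bm u}_N \to \bm u$ in $L^2(0,T;\bm V_\sigma)$, $B_M\bar{\bm u}_N \to B_M\bm u$ strongly in $L^2(0,T;\bm V_\sigma')$ (the limit coinciding with the $\bm L^2$-pairing because $\bm u(t) \in \bm W$ makes $B_M\bm u(t) \in \bm H_\sigma$ via \eref{bm}); and $\bar{\bm f}_N \to \bm f$ strongly in $L^2(0,T;\bm L^2(\Omega))$ handles the right-hand side. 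For the nonsmooth term I would write $\Phi_N(\bar{\bm u}_N) = \Phi(\bar{\bm u}_N) + \sum_k \int_{(k-1)h_N}^{kh_N}(g_k - g(t), |\nabla \bm u_k|)\,dt$, estimate the last sum by $(\sup_k \|\nabla \bm u_k\|_{\bm L^2(\Omega)})\,\|g - \bar{g}_N\|_{L^1(0,T;L^2(\Omega))} \to 0$ (convergence of the piecewise-constant average $\bar{g}_N$ of $g$, with the prefactor bounded by \lref{2ndest}), and use that $\Phi$ is continuous on $L^2(0,T;\bm V_\sigma)$; hence $\Phi_N(\bar{\bm u}_N) \to \Phi(\bm u)$ and, since $\Phi_N(\bm\upsilon) \to \Phi(\bm\upsilon)$, letting $N \to \infty$ in \eref{0Tvf} yields, for every $\bm\upsilon \in L^2(0,T;\bm V_\sigma)$,
\begin{equation*}
	\int_0^T (\bm u', \bm\upsilon - \bm u)\,dt + \int_0^T (B_M \bm u, \bm\upsilon - \bm u)\,dt + \Phi(\bm\upsilon) - \Phi(\bm u) \ge \int_0^T (\bm f, \bm\upsilon - \bm u)\,dt.
\end{equation*}

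Finally I would localize in time to recover \eref{vfa}. Choosing $\bm\upsilon = \bm u + \chi_E(\bm v - \bm u)$ for $\bm v \in \bm V_\sigma$ and measurable $E \subset (0,T)$, then letting $\bm v$ vary over a countable dense subset of $\bm V_\sigma$ and using continuity of all terms in $\bm v$, gives for a.a.\ $t$ and all $\bm v \in \bm V_\sigma$ the inequality with $\varphi^t(\bm v) - \varphi^t(\bm u(t))$ in place of the diffusion and yield terms. Replacing $\bm v$ by $\bm u(t) + s(\bm v - \bm u(t))$ with $s \in (0,1)$, using the convexity estimate $\varphi^t(\bm u(t) + s(\bm v - \bm u(t))) - \varphi^t(\bm u(t)) \le s\,\nu(\nabla\bm u(t), \nabla(\bm v - \bm u(t))) + \frac{s^2\nu}{2}\|\nabla(\bm v - \bm u(t))\|_{\bm L^2(\Omega)}^2 + s\,(g(t), |\nabla\bm v| - |\nabla\bm u(t)|)$, dividing by $s$ and letting $s \to 0^+$ produces the term $\nu(\nabla\bm u(t), \nabla(\bm v - \bm u(t)))$; since $\bm u(t) \in \bm W$, integration by parts using $(\nabla\bm u(t)\bm n)_\tau = \bm 0$ and $\bm v \cdot \bm n = 0$ rewrites it as $-\nu(\Delta\bm u(t), \bm v - \bm u(t))$, which is exactly \eref{vfa}. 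I expect the main obstacle to be precisely this last step together with the convergence of the time-dependent functional $\Phi_N \to \Phi$: the former relies on convexity and essentially on the perfect slip boundary condition (through $\bm u(t) \in \bm W$), while the latter requires controlling the mismatch between the piecewise-constant $g_k$ and $g(t)$ using only $g \in L^\infty(0,T;H^1_0(\Omega))$ and the uniform $L^\infty(0,T;\bm H^1(\Omega))$-bound on the discrete solutions.
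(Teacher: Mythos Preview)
Your proposal is correct and follows essentially the same route as the paper: extract uniform bounds from Lemmas \ref{1stest}--\ref{3rdest}, pass to the limit in the space--time inequality \eref{0Tvf}, and localize in $t$. The technical choices differ only slightly: the paper invokes Aubin--Lions and the estimate \eref{1/3} to obtain strong convergence in $L^2(0,T;\bm V_\sigma)$, whereas you use Arzel\`a--Ascoli combined with the interpolation $\|\cdot\|_{\bm V_\sigma} \le C\|\cdot\|_{\bm H_\sigma}^{1/2}\|\cdot\|_{\bm H^2}^{1/2}$; and the paper obtains $\bm u \in L^\infty(0,T;\bm W)$ a posteriori from the limiting inclusion \eref{ee} together with \eref{K-estimate}, whereas you derive the uniform-in-$k$ bound $\|\bm u_k\|_{\bm H^2(\Omega)} \le C_M$ directly from \eref{dee}, \eref{bm}, \eref{K-estimate}.
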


\begin{proof}
Collecting the estimates in Lemmas \ref{1stest}, \ref{2ndest}, and \ref{3rdest}, we deduce that, for sufficiently large $N$,
\begin{align}
	\|\hat{\bm u}_N' \|_{L^\infty(0,T;\bm H_\sigma)} + \|\hat{\bm u}_N' \|_{L^2(0,T;\bm V_\sigma)}
		+ \|\hat{\bm u}_N \|_{L^\infty (0,T;\bm V_\sigma)} + \| \hat{\bm u}_N \|_{L^2(0,T;\bm H^2(\Omega))} &\le C_M, \\
	\|\bar{\bm u}_N \|_{L^\infty (0,T;\bm V_\sigma)} + \|\bar{\bm u}_N \|_{L^2(0,T;\bm H^2(\Omega))} &\le C_M, \\
	\| \hat{\bm u}_N - \bar{\bm u}_N \|_{L^2(0,T;\bm V_\sigma)}^2 = \frac{h_N^{2}}{3} \sum _{k=1}^N \Big\| \frac{\bm u_k - \bm u_{k-1}}{h_N} \Big\|_{\bm V_\sigma}^2 h_N \le \frac{h_N^{2}}{3} C_M \label{1/3} \to 0 \quad \text{as $N \to \infty$},
\end{align}
where $C_M = C(\Omega, \nu, T, \bm f, g, \bm u_0, \bm f^*, M)$ is independent of $N$.
As a result of the weak compactness and of the Aubin--Lions compactness theorem \cite[Corollary 4]{Sim87}, we can find some $\bm u \in W^{1,\infty}(0,T;\bm H_\sigma) \cap H^1 (0,T;\bm V_\sigma) \cap L^2(0,T;\bm W)$, $\bar{\bm u} \in L^\infty (0,T;\bm V_\sigma) \cap L^2(0,T;\bm W)$, and subsequence $\{N_n \}_{n \in \mathbb{N}}$ such that
\begin{align}
	\hat{\bm u}_{N_n}' \to \bm u'
	& \quad {\rm weakly~star~in~} L^\infty(0,T;\bm H_\sigma) \cap L^2 (0,T;\bm V_\sigma ), \notag \\
	\hat{\bm u}_{N_n} \to \bm u
	& \quad {\rm weakly~star~in~} L^\infty(0,T;\bm V_\sigma) \cap L^2 (0,T;\bm W ), \notag \\
	\hat{\bm u}_{N_n} \to \bm u 
	& \quad {\rm strongly~in~} C( [0,T];\bm H_\sigma ) \cap L^2(0,T;\bm V_\sigma), \label{eq: strong conv hat} \\
	\bar{\bm u}_{N_n} \to \bar{\bm u}
	& \quad {\rm weakly~star~in~} L^\infty(0,T;\bm V_\sigma)\cap L^2 (0,T;\bm W ), \notag
\end{align}
as $n \to \infty$.
By virtue of \eref{1/3} and \eref{eq: strong conv hat}, one has
\begin{equation*}
	\| \bar {\bm u}_{N_n}-\bm u \|_{L^2(0,T;\bm V_\sigma)}
	\le \| \bar {\bm u}_{N_n}-\hat{\bm u}_{N_n} \|_{L^2(0,T;\bm V_\sigma)}
	+\| \hat{\bm u}_{N_n}-\bm u \|_{L^2(0,T;\bm V_\sigma)}
	\to 0 \quad \text{as $n \to \infty$},
\end{equation*}
so that $\bar{\bm u}_{N_n} \to \bm u$ strongly in $L^2(0, T; \bm V_\sigma)$ and $\bm u = \bar{\bm u}$.
This implies that $B_M \bar{\bm u}_{N_n} \to B_M \bm u$ strongly in $L^2(0, T; \bm V_\sigma')$ in view of \eref{contibm} and that
\begin{equation*}
	| \Phi_{N_n} (\bar{\bm u}_{N_n}) - \Phi (\bm u) | 
	\le | \Phi_{N_n} (\bar{\bm u}_{N_n}) - \Phi _{N_n}(\bm u) |+
	| \Phi_{N_n} (\bm u) - \Phi (\bm u) | \\
	\to 0 \quad \text{as $n \to \infty$}.
\end{equation*}   

Therefore, setting $N = N_n$ and letting $n \to +\infty$ in \eqref{0Tvf}, we obtain, for all $\bm \upsilon \in L^2(0,T;\bm V_\sigma)$,
\begin{gather*}
	\int_0^T ( 
	\bm u'(t), \bm \upsilon(t)-
	\bm u(t) 
	) dt 
	+ \int_0^T 
	( B_M \bm u(t), \bm \upsilon(t)-
	\bm u(t) 
	) dt
	+ \Phi(\bm \upsilon )
	- \Phi(\bm u)
	\ge 
	\int_0^T ( 
	\bm f(t), \bm \upsilon(t)-
	\bm u(t)
	) dt.
\end{gather*}
In particular, $\bm u(t) \in D(\partial \varphi^t) \subset \bm W$ for a.a.\ $t \in (0,T)$ and the following evolution equation holds:
\begin{gather}
	\begin{cases}
	\bm u'(t)
	+ \partial \varphi^t ( \bm u(t))
	+ B_M \bm u(t)
	\ni P \bm f(t)
	\quad \mbox{in~}\bm H_\sigma, \quad \mbox{for~a.a.~} t \in (0,T),\\
	\bm u(0) = \bm u_0
	\quad \mbox{in~} \bm H_\sigma.
	\end{cases}
	\label{ee}
\end{gather}
This $\bm u$ satisfies the equivalent variational formulation \eqref{vfa}.  
Finally, using \eref{bm}, \eref{ee}, and \eref{K-estimate}, we have
\begin{align*}
	\|\bm u(t)\|_{\bm H^2(\Omega)} &\le C(\|P\bm f(t) - \bm u'(t) - B_M \bm u(t)\|_{\bm H_\sigma} + \|g(t)\|_{H^1(\Omega)}) \\
		&\le C_M (\|\bm f(t)\|_{\bm L^2(\Omega)} + \|\bm u'(t)\|_{\bm H_\sigma} + \|g(t)\|_{H^1(\Omega)} + 1) + \frac12 \|\bm u(t)\|_{H^2(\Omega)} \quad \text{for a.a.\ $t \in (0, T)$},
\end{align*}
which yields $\bm u \in L^\infty(0,T;\bm W)$.
This completes the proof of the proposition.
\end{proof}

\subsection{Proof of \tref{thm: evolution}}
Let $\bm u$ be a solution obtained by Proposition \ref{prop: sub}, which means the existence of some $\bm u^* \in L^2(0,T;\bm H_\sigma)$ such that
\begin{equation*}
	\bm u'(t) + \bm u^*(t) + B_M \bm u(t) = P\bm f(t) \text{ in } \bm H_\sigma \quad\text{and}\quad \bm u^*(t) \in \partial \varphi^t(\bm u(t)) \text{ in } \bm H_\sigma \quad \text{for a.a.\ } t \in (0,T).
\end{equation*}
Let us recall a chain rule for a time-dependent convex functional, which is essentially due to \cite[Lemma 2.10]{AO04}, \cite[Theorem 2.1.1]{Ken81}, \cite[Proposition 3.4]{Ota82}, \cite[Lemma 2.4]{Ota93}, and \cite[Lemma 3.6]{OY78}.

\begin{lem}\label{lem: chain}
The mapping $t \mapsto \varphi^t(\bm u(t))$ is differentiable a.e.\ on $(0,T)$, and it satisfies
\begin{equation}
	\frac{d}{dt} \varphi^t ( \bm u(t) )
	- ( \bm u^*(t), \bm u'(t) )
	\le \| g'(t) \|_{L^2(\Omega)} \| \nabla \bm u (t) \|_{\bm L^2(\Omega)}
	\quad \mbox{for a.a.\ } t \in (0,T). 
	\label{chain}
\end{equation}
\end{lem}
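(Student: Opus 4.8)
The plan is to exploit that the $t$-dependence of $\varphi^t$ is purely additive and governed by $g$: for any $\bm v\in\bm V_\sigma$ and $0\le s\le t\le T$, since $g\in H^1(0,T;L^2(\Omega))$ one has $g(t)-g(s)=\int_s^t g'(r)\,dr$ in $L^2(\Omega)$, whence (by Fubini)
\begin{equation*}
	\varphi^t(\bm v)-\varphi^s(\bm v)=\int_\Omega\big(g(t)-g(s)\big)|\nabla\bm v|\,d\bm x=\int_s^t\big(g'(r),|\nabla\bm v|\big)\,dr.
\end{equation*}
This identity, combined with the (pointwise-in-$t$) convexity of $\varphi^t$ and the subdifferential inequality, is the engine of the proof; the argument is a concrete instance of the abstract chain rules of \cite{AO04, Ken81, Ota82, Ota93, OY78}. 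Only the one-sided estimate \eref{chain} is claimed and needed, which is natural since $\varphi^t$ need not be Gâteaux differentiable at $\bm u(t)$.

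First I would show that $\psi(t):=\varphi^t(\bm u(t))$ is absolutely continuous on $[0,T]$, hence differentiable a.e. Write $\psi(t)-\psi(s)=\big[\varphi^t(\bm u(t))-\varphi^t(\bm u(s))\big]+\big[\varphi^t(\bm u(s))-\varphi^s(\bm u(s))\big]$. Using the pointwise bound $\big||\nabla\bm a|-|\nabla\bm b|\big|\le|\nabla(\bm a-\bm b)|$ together with $\bm u\in L^\infty(0,T;\bm V_\sigma)$ and $g\in L^\infty(0,T;L^2(\Omega))$, the first bracket is bounded in absolute value by $C\|\nabla(\bm u(t)-\bm u(s))\|_{\bm L^2(\Omega)}\le C\int_s^t\|\nabla\bm u'(r)\|_{\bm L^2(\Omega)}\,dr$ (recall $\bm u\in H^1(0,T;\bm V_\sigma)$); by the identity above the second bracket equals $\int_s^t(g'(r),|\nabla\bm u(s)|)\,dr$, bounded by $C\int_s^t\|g'(r)\|_{L^2(\Omega)}\,dr$. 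Since $\|\nabla\bm u'(\cdot)\|_{\bm L^2(\Omega)},\|g'(\cdot)\|_{L^2(\Omega)}\in L^2(0,T)\subset L^1(0,T)$, the function $\psi$ is absolutely continuous.

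Next I would establish \eref{chain} at a.e.\ $t$. Fix $t$ that is simultaneously a differentiability point of $\psi$, a point at which $\bm u^*(t)\in\partial\varphi^t(\bm u(t))$ in $\bm H_\sigma$, a point at which $h^{-1}(\bm u(t)-\bm u(t-h))\to\bm u'(t)$ in $\bm H_\sigma$ as $h\to0^+$ (true a.e.\ because $\bm u\in W^{1,\infty}(0,T;\bm H_\sigma)$), and a Lebesgue point of $r\mapsto\|g'(r)\|_{L^2(\Omega)}$; these exclude only a null set. For small $h>0$,
\begin{align*}
	\psi(t)-\psi(t-h)
	&=\big[\varphi^t(\bm u(t))-\varphi^t(\bm u(t-h))\big]+\big[\varphi^t(\bm u(t-h))-\varphi^{t-h}(\bm u(t-h))\big] \\
	&\le\big(\bm u^*(t),\bm u(t)-\bm u(t-h)\big)+\Big(\int_{t-h}^t\|g'(r)\|_{L^2(\Omega)}\,dr\Big)\|\nabla\bm u(t-h)\|_{\bm L^2(\Omega)},
\end{align*}
where the first bracket was estimated by the subdifferential inequality at $\bm u(t)$ and the second by the identity above and the Cauchy--Schwarz inequality. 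Dividing by $h$ and letting $h\to0^+$: the first term tends to $(\bm u^*(t),\bm u'(t))$; and since $t\mapsto\|\nabla\bm u(t)\|_{\bm L^2(\Omega)}$ is continuous (as $\bm u\in H^1(0,T;\bm V_\sigma)\hookrightarrow C([0,T];\bm V_\sigma)$) while $h^{-1}\int_{t-h}^t\|g'(r)\|_{L^2(\Omega)}\,dr\to\|g'(t)\|_{L^2(\Omega)}$, the second term tends to $\|g'(t)\|_{L^2(\Omega)}\|\nabla\bm u(t)\|_{\bm L^2(\Omega)}$. The left-hand side tends to $\psi'(t)$ because $\psi$ is differentiable at $t$, and \eref{chain} follows.

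I expect the main obstacle to be essentially bookkeeping: assembling the set of ``good'' times as a finite intersection of full-measure sets (including the a.e.\ set on which the measurable selection $\bm u^*$ actually lands in $\partial\varphi^t(\bm u(t))$) and checking that the mere $L^2(0,T;\bm H_\sigma)$-regularity of $\bm u^*$ causes no difficulty---it does not, since in the difference quotient $\bm u^*(t)$ is frozen and paired against a strongly convergent sequence in $\bm H_\sigma$. The one genuinely structural choice, rather than technical, is to use the backward difference quotient with the subdifferential inequality evaluated at $\bm u(t)$ (not at $\bm u(t-h)$): this is exactly what convexity permits, and it is what makes the passage to the limit clean.
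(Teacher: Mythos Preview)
Your argument is correct and complete; the differences from the paper's proof are minor but worth noting. For absolute continuity of $\psi$, the paper simply invokes a Moreau--Yosida argument from \cite{Ken81}, whereas you give a direct estimate exploiting the stronger regularity $\bm u\in H^1(0,T;\bm V_\sigma)$ already available from \pref{prop: sub}; your route is more self-contained here. For the one-sided estimate \eref{chain}, the paper uses the \emph{forward} increment $\psi(t+h_n)-\psi(t)$ and applies the subdifferential inequality at the shifted point $\bm u(t+h_n)$ with $\bm u^*(t+h_n)$, which forces the extraction of a subsequence $h_n\searrow0$ along which $\bm u^*(t+h_n)\to\bm u^*(t)$ in $\bm H_\sigma$ (possible because $\bm u^*\in L^2(0,T;\bm H_\sigma)$). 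You instead take the \emph{backward} increment and apply the subdifferential inequality at $\bm u(t)$ with the fixed element $\bm u^*(t)$; this is cleaner, since no subsequence is needed and the pairing $(\bm u^*(t),h^{-1}(\bm u(t)-\bm u(t-h)))$ converges directly. Both approaches yield exactly \eref{chain}; the paper's version would be preferable if one only had $\bm u'\in L^2(0,T;\bm H_\sigma)$ without the extra $\bm V_\sigma$-regularity (since the absolute continuity step would then need the Moreau--Yosida machinery), while yours is the natural shortcut in the present setting.
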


\begin{proof}
An argument based on the Moreau--Yosida regularization (see e.g.\ \cite[p.\ 29]{Ken81}) shows that $t \mapsto \varphi^t(\bm u(t))$ is absolutely continuous on $[0,T]$, which implies its differentiability.
Since $\bm u' \in L^2(0,T;\bm H_\sigma)$, we have
\begin{equation*}
	\frac{\bm u(t+h)-\bm u(t)}{h} = \frac{1}{h} \int_{t}^{t+h} \bm u'(\tau) d\tau 
	\to \bm u'(t) \quad {\rm in~} \bm H_\sigma \text{ as $h \to 0$, for a.a.\ $t \in (0, T)$}.
\end{equation*}
Next, since $\bm u^* \in L^2(0,T; \bm H_\sigma)$, for a.a.\ $t \in (0,T)$ there exists $\{h_n\}$, $h_n \searrow 0$ such that
\begin{equation*}
	\bm u^*(t+h_n) \in \partial \varphi^{t+h_n} ( \bm u(t+h_n)), \quad 
	\bm u^*(t+h_n) \to \bm u^*(t) \quad {\rm in~} \bm H_\sigma \quad \text{as $n \to \infty$}.
\end{equation*}
By the definition of the subdifferential of $\varphi^{t+h_n}$,
\begin{align*}
	\varphi^{t+h_n} ( \bm u(t+h_n) ) - \varphi^{t} ( \bm u(t) ) - ( \bm u(t+h_n)-\bm u(t), \bm u^*(t+h_n) )
	& \le \varphi^{t+h_n} ( \bm u(t) ) - \varphi^ t ( \bm u(t) ) \\
	& \le \int_t^{t+h_n} \| g'(\tau) \|_{L^2(\Omega)} d\tau \| \nabla \bm u(t) \|_{\bm L^2(\Omega)},
\end{align*}
which proves \eref{chain} after dividing by $h_n$ and taking $n \to \infty$.
\end{proof}

Let us start the proof of the existence part in \tref{thm: evolution}.
For this purpose, it suffices to establish an upper bound of $\|\bm u(t)\|_{\bm V_\sigma}$ that is uniform in $t$ and independent of $M$.
By \lref{1stest}, we have $\|\bm u \|_{L^\infty(0, T; \bm H_\sigma) \cap L^2(0, T; \bm V_\sigma)} \le C(\Omega, \nu, \bm u_0, \bm f)$.
Next we apply \lref{lem: chain} to obtain
\begin{equation*}
	\frac{d}{dt} \varphi^t ( \bm u(t) ) + \|\bm u^*(t) \|_{\bm H_\sigma}^2
		\le ( \bm u^*(t), \bm f(t) - B_M \bm u(t) ) + \| g'(t) \|_{L^2(\Omega)} \| \nabla \bm u (t) \|_{\bm L^2(\Omega)} \quad \text{for a.a.\ $t \in (0,T)$,}
\end{equation*}
where we observe from \eref{bm3}, \eref{eq: coercivity}, \eref{K-estimate}, and Young's inequality that
\begin{align*}
	(\bm u^*(t), \bm f(t) - B_M \bm u(t)) &\le \|\bm u^*(t)\|_{\bm H_\sigma} \big[ \|\bm f(t)\|_{\bm L^2(\Omega)}
		+ C\|\nabla \bm u(t)\|_{\bm L^2(\Omega)}^{3/2} (\|\bm u^*(t)\|_{\bm L^2(\Omega)}^{1/2} + \|g(t)\|_{H^1(\Omega)}^{1/2}) \big] \\
	&\le \frac12 \|\bm u^*(t)\|_{\bm H_\sigma}^2 + C (\|\bm f(t)\|_{\bm L^2(\Omega)}^2 + \|\nabla \bm u(t)\|_{\bm L^2(\Omega)}^6 + \|g(t)\|_{H^1(\Omega)}^2).
\end{align*}
Consequently, for a.a.\ $t \in (0,T)$,
\begin{equation*}
	\frac{d}{ds} \varphi^s ( \bm u(t) )
		\le C (\|\bm f(t)\|_{\bm L^2(\Omega)}^2 + \|g(t)\|_{H^1(\Omega)}^2) + \| g'(t) \|_{L^2(\Omega)} \|\nabla \bm u (t) \|_{\bm L^2(\Omega)} + C\|\nabla \bm u(t)\|_{\bm L^2(\Omega)}^6.
\end{equation*}
If we set $y(t) := \varphi^t ( \bm u(t) ) + 1 \ge 1$, this yields
\begin{equation} \label{eq1: prf of Thm4.1}
	y'(t) \le \underbrace{ C(\|\bm f(t)\|_{\bm L^2(\Omega)}^2 + \|g(t)\|_{H^1(\Omega)}^2 + \|g'(t)\|_{L^2(\Omega)} + \|\nabla \bm u (t) \|_{\bm L^2(\Omega)}^2) }_{ =: a(t) } y(t)^2, \quad C = C(\Omega, \nu),
\end{equation}
where $\int_0^T a(s) \, ds \le C(\Omega, \nu, \bm f, g, \bm u_0)$ independently of $M$ by \lref{1stest}. Then, for suitably small $t > 0$,
\begin{equation*}
	y(t) \le \frac{y(0)}{1 - y(0) \int_0^t a(s) \, ds}.
\end{equation*}

It is possible to choose $T^* = C(\Omega, \nu, \bm f, g, \bm u_0)$ in such a way that $y(0) \int_0^{T^*} a(t) \, dt \le 1/2$, which gives us a bound $\|\bm u\|_{L^\infty(0, T^*; \bm V_\sigma)} \le C(\Omega, \nu, \bm f, g, \bm u_0)$ independently of $M$.
We remark that in case $d = 2$ we need not restrict $T$ to $T^*$.
In fact, use \eref{bm2} in place of \eref{bm3} to arrive at a similar estimate to \eref{eq1: prf of Thm4.1}, where the power of $y(t)$ in the right-hand side is replaced by 1.
Then Gronwall's inequality enables us to bound $\bm u \in L^\infty(0, T; \bm V_\sigma)$ globally in $(0, T)$.

Now fix $M$ to be greater than the $L^\infty(0, T^*; \bm V_\sigma)$-bound above, and consider $\bm u$ constructed in \pref{prop: sub}.
Since $B_M\bm u(t) = B\bm u(t)$ for a.a.\ $t \in (0, T^*)$, this $\bm u$ indeed solves \eref{wf} on $(0, T^*)$, which proves the existence part.

The uniqueness part of \tref{thm: evolution} is easy.
In fact, let $\bm v$ and $\bm u$ be two solutions of \eref{wf}.
Then, we take $\bm v(t)$ (resp.\ $\bm u(t)$) in the variational inequality that $\bm u(t)$ (resp.\ $\bm v(t)$) satisfies, and add the resulting inequalities to get
\begin{align*}
	\frac{1}{2} \frac{d}{dt} \| \bm v(t)-\bm u(t) \|_{\bm H_\sigma}^2
	+ \nu \| \nabla ( \bm v(t)-\bm u(t) ) \|^2_{\bm L^2(\Omega)}
	& \le | b(\bm v(t),\bm v(t),\bm v(t)-\bm u(t) ) - 
	b(\bm u(t),\bm u(t),\bm v(t)-\bm u(t) ) | \\
	& =  | b(\bm v(t)-\bm u(t),\bm v(t)-\bm u(t), \bm u(t) ) |.
\end{align*}
Then a standard argument, e.g.\ \cite[Chapter 3]{Tem01}, leads to $\bm v = \bm u$.
This completes the proof of \tref{thm: evolution}. 

\subsection{Proof of \tref{thm: evolution weak}}
We exploit the same strategy as in \tref{thm: evolution}, however without Lemma \ref{3rdest} this time (Lemmas \ref{1stest} and \ref{2ndest} remain to hold in the setting of \tref{thm: evolution weak}).
The difficulty is two folded: 1) the strong convergence of $\hat{\bm u}_{N}$ in $L^2(0, T; \bm V_\sigma)$ is not trivial if we do not know the boundedness of $\hat{\bm u}_N' \in L^2(0, T; \bm V_\sigma)$; 2) $\hat{\bm u}_{N}$ may not belong to $L^2(0, h_N; \bm H^2(\Omega))$ because we only assume $\bm u_0 \in \bm V_\sigma$.

To address issue 1), we devise an auxiliary estimate for \lref{2ndest}.
\begin{lem} \label{4thest}
	We have
	\begin{equation} \label{eq: 4thest}
	\begin{aligned}
		\sum_{k=1}^N \|\nabla(\bm u_k - \bm u_{k-1})\|_{\bm L^2(\Omega)}^2 \le C_M (\|\nabla\bm u_0\|_{\bm L^2(\Omega)}^2 + \|\bm f\|_{L^2(0,T;\bm L^2(\Omega))}^2 + \|g\|_{L^2(0,T; H^1(\Omega))}^2 + \|g\|_{W^{1,1}(0,T;L^2(\Omega))}^2),
	\end{aligned}
	\end{equation}
	where the constant $C_M = C(\Omega, \nu, T, M)$ is independent of $N$.
\end{lem}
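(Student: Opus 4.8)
The plan is to establish a per-time-step energy inequality by testing the discrete variational inequality \eref{vf} at the $k$-th step with the competitor $\bm v = \bm u_{k-1} \in \bm V_\sigma$, and then to sum over $k$. Taking $\bm v = \bm u_{k-1}$ in \eref{vf}, moving $(B_M \bm u_k, \bm u_k - \bm u_{k-1})$ to the right, and using the identity $\nu(\nabla \bm u_k, \nabla(\bm u_k - \bm u_{k-1})) = \tfrac{\nu}{2}(\|\nabla \bm u_k\|_{\bm L^2(\Omega)}^2 - \|\nabla \bm u_{k-1}\|_{\bm L^2(\Omega)}^2) + \tfrac{\nu}{2}\|\nabla(\bm u_k - \bm u_{k-1})\|_{\bm L^2(\Omega)}^2$, one obtains an inequality whose left side contains $h_N^{-1}\|\bm u_k - \bm u_{k-1}\|_{\bm H_\sigma}^2$ and the desired $\tfrac{\nu}{2}\|\nabla(\bm u_k - \bm u_{k-1})\|_{\bm L^2(\Omega)}^2$, and whose right side is $(\bm f_k - B_M \bm u_k, \bm u_k - \bm u_{k-1})$ plus a telescoping difference and $-(g_k, |\nabla \bm u_k| - |\nabla \bm u_{k-1}|)$. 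The key reorganization is to rewrite the first right-hand term via the discrete equation \eref{dee}: since $P\bm f_k - B_M \bm u_k = h_N^{-1}(\bm u_k - \bm u_{k-1}) + \bm u_k^*$ and $\bm u_k - \bm u_{k-1} \in \bm H_\sigma$, it equals $h_N^{-1}\|\bm u_k - \bm u_{k-1}\|_{\bm H_\sigma}^2 + (\bm u_k^*, \bm u_k - \bm u_{k-1})$, so the $h_N^{-1}$-terms cancel and one is left with
\begin{equation*}
	\tfrac{\nu}{2}\|\nabla(\bm u_k - \bm u_{k-1})\|_{\bm L^2(\Omega)}^2 \le \tfrac{\nu}{2}\big(\|\nabla \bm u_{k-1}\|_{\bm L^2(\Omega)}^2 - \|\nabla \bm u_k\|_{\bm L^2(\Omega)}^2\big) - (g_k, |\nabla \bm u_k| - |\nabla \bm u_{k-1}|) + (\bm u_k^*, \bm u_k - \bm u_{k-1}).
\end{equation*}

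Summing for $k = 1, \dots, N$, the first bracket telescopes to $\tfrac{\nu}{2}(\|\nabla \bm u_0\|_{\bm L^2(\Omega)}^2 - \|\nabla \bm u_N\|_{\bm L^2(\Omega)}^2)$; writing $(g_k, |\nabla \bm u_{k-1}|) = (g_{k-1}, |\nabla \bm u_{k-1}|) + (g_k - g_{k-1}, |\nabla \bm u_{k-1}|)$, the $g$-terms produce the telescoping part $(g_0, |\nabla \bm u_0|) - (g_N, |\nabla \bm u_N|)$ plus a remainder $\sum_{k=1}^N(g_k - g_{k-1}, |\nabla \bm u_{k-1}|)$. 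After discarding the nonpositive $-\tfrac{\nu}{2}\|\nabla \bm u_N\|_{\bm L^2(\Omega)}^2$ and $-(g_N, |\nabla \bm u_N|)$, it remains to control four quantities. The term $\tfrac{\nu}{2}\|\nabla \bm u_0\|_{\bm L^2(\Omega)}^2$ is already of the asserted form; $(g_0, |\nabla \bm u_0|)$ is handled by Young's inequality together with $\|g_0\|_{L^2(\Omega)} = \|g(0)\|_{L^2(\Omega)} \le C(\Omega, T)\|g\|_{W^{1,1}(0,T;L^2(\Omega))}$; and for the remainder I would reuse the pointwise bound $(g_k - g_{k-1}, |\nabla \bm u_{k-1}|) \le \|g'\|_{L^1(\min\{(k-2)h_N,0\}, kh_N; L^2(\Omega))}\|\nabla \bm u_{k-1}\|_{\bm L^2(\Omega)}$ already derived in the proof of \lref{2ndest}, so that (each subinterval being covered at most twice) the sum is $\le 2\|g\|_{W^{1,1}(0,T;L^2(\Omega))}\max_{0\le k\le N}\|\nabla \bm u_k\|_{\bm L^2(\Omega)}$, which is absorbed via Young's inequality and the $\ell^\infty(\bm H^1)$-bound \eref{eq: 2ndest}.

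The fourth quantity, $\sum_{k=1}^N(\bm u_k^*, \bm u_k - \bm u_{k-1})$, is the main obstacle: a direct Cauchy--Schwarz estimate is useless because $\sum_k\|\bm u_k^*\|_{\bm H_\sigma}^2$ carries a factor $h_N^{-1}$ which blows up as $N \to \infty$. The remedy, exactly as in the proof of \lref{2ndest}, is to invoke \eref{dee} a second time: $(\bm u_k^*, \bm u_k - \bm u_{k-1}) = h_N(\bm u_k^*, P\bm f_k) - h_N\|\bm u_k^*\|_{\bm H_\sigma}^2 - h_N(\bm u_k^*, B_M \bm u_k)$. This restores the weight $h_N$ and, crucially, produces the favorable term $-h_N\|\bm u_k^*\|_{\bm H_\sigma}^2$, which absorbs the two cross terms: one uses $\sum_k h_N\|P\bm f_k\|_{\bm H_\sigma}^2 \le \sum_k h_N\|\bm f_k\|_{\bm L^2(\Omega)}^2 \le \|\bm f\|_{L^2(0,T;\bm L^2(\Omega))}^2$ (Jensen), the bound $\|B_M \bm u_k\|_{\bm H_\sigma}^2 \le C_M(\|\bm u_k^*\|_{\bm H_\sigma} + \|g_k\|_{H^1(\Omega)})$ from \eref{bm} and \eref{K-estimate}, and $\sum_k h_N\|g_k\|_{H^1(\Omega)} \le \sqrt{T}\|g\|_{L^2(0,T;H^1(\Omega))}$, all combined with Young's inequality. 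Collecting the four estimates gives \eref{eq: 4thest}. (Via the coercivity \eref{eq: coercivity}, this also controls $\sum_k\|\bm u_k - \bm u_{k-1}\|_{\bm V_\sigma}^2$, which is the discrete $H^1(0,T;\bm V_\sigma)$-type a priori bound that replaces \lref{3rdest} in the setting of \tref{thm: evolution weak}.)
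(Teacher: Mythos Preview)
Your proof is correct, and you begin exactly as the paper does: test \eref{vf} with $\bm v=\bm u_{k-1}$, expand the quadratic form, and rewrite the $g$-term as a telescoping part plus $(g_k-g_{k-1},|\nabla\bm u_{k-1}|)$. From that point on, however, you take a longer route than the paper. The paper simply keeps the nonnegative term $h_N^{-1}\|\bm u_k-\bm u_{k-1}\|_{\bm H_\sigma}^2$ on the left and bounds $-(B_M\bm u_k-\bm f_k,\bm u_k-\bm u_{k-1})$ directly, summing and citing \lref{2ndest}: with the $h_N$-weighted Cauchy--Schwarz one has $\sum_k(\bm f_k-B_M\bm u_k,\bm u_k-\bm u_{k-1})\le(\sum_k h_N\|\bm f_k-B_M\bm u_k\|_{\bm H_\sigma}^2)^{1/2}(\sum_k h_N\|(\bm u_k-\bm u_{k-1})/h_N\|_{\bm H_\sigma}^2)^{1/2}$, and both factors are already controlled by \lref{2ndest}. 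By contrast, you invoke \eref{dee} once to cancel the $h_N^{-1}$-term, and then a second time to handle $(\bm u_k^*,\bm u_k-\bm u_{k-1})$; this second step is effectively re-running the argument of \lref{2ndest}. Your claim that a direct Cauchy--Schwarz estimate on $(\bm u_k^*,\bm u_k-\bm u_{k-1})$ is ``useless'' is not right: the same weighted splitting gives $\sum_k(\bm u_k^*,\bm u_k-\bm u_{k-1})\le(\sum_k h_N\|\bm u_k^*\|_{\bm H_\sigma}^2)^{1/2}(\sum_k h_N\|(\bm u_k-\bm u_{k-1})/h_N\|_{\bm H_\sigma}^2)^{1/2}$, which is bounded. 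So your detour is unnecessary but harmless. One small inaccuracy in your closing parenthetical: the bound $\sum_k\|\bm u_k-\bm u_{k-1}\|_{\bm V_\sigma}^2\le C_M$ is strictly weaker than a discrete $H^1(0,T;\bm V_\sigma)$-bound (that would be $\sum_k h_N^{-1}\|\bm u_k-\bm u_{k-1}\|_{\bm V_\sigma}^2\le C_M$, cf.\ \lref{3rdest}); its role in the proof of \tref{thm: evolution weak} is only to ensure $\|\bar{\bm u}_N-\hat{\bm u}_N\|_{L^2(0,T;\bm V_\sigma)}^2=(h_N/3)\sum_k\|\bm u_k-\bm u_{k-1}\|_{\bm V_\sigma}^2\to 0$.
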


\begin{proof}
For $k=1, \dots, N$, we take $\bm v = \bm u_{k-1}$ in \eqref{vf} to get
\begin{align*}
& \left\| \frac{\bm u_k-\bm u_{k-1}}{h_N} \right\|_{\bm H_\sigma}^2 h_N
+ \frac{\nu}{2} (\| \nabla \bm u_k \|_{\bm L^2(\Omega)}^{2} - \| \nabla \bm u_{k-1}\|_{\bm L^2(\Omega)}^2
+ \| \nabla( \bm u_k-\bm u_{k-1}) \|_{\bm L^2(\Omega)}^2 )
+ (g_k, |\nabla \bm u_{k}|)-(g_{k-1}, |\nabla \bm u_{k-1}|) \\
\le \; &-(B_M \bm u_k - \bm f_{k}, \bm u_{k}- \bm u_{k-1} ) + ( g_k - g_{k-1}, |\nabla \bm u_{k-1}|).
\end{align*}
Adding this for $k=1, \dots, N$ and applying the estimates known from \lref{2ndest}, we obtain \eref{eq: 4thest}.
\end{proof}

Next, to resolve issue 2), we introduce another piecewise linear interpolation of $\{\bm u _k\}$ by
\begin{equation*}
	\bm w_N(t) = 
	\begin{cases}
	\bm u_1 & \quad {\rm if~} t \in [0, h_N], \\
	\hat {\bm u}_N (t) & \quad {\rm if~} t \in [h_N, T].
	\end{cases}
\end{equation*}
Then we have $\bm w_N (t) \in \bm H^2(\Omega)$ for all $t \in [0,T]$ and
\begin{align*}
	\| \bm w _N' \|_{L^2(0,T;\bm L^2(\Omega))}^2 &\le \| \hat {\bm u}_N' \|_{L^2(0,T;\bm L^2(\Omega))}^2, \\
	\| \bm w _N \|_{L^2(0,T;\bm H^2(\Omega))}^2 &\le \frac{h_N}{2} \| \bm u_1 \|_{\bm H^2(\Omega)} ^2 + \| \hat{\bm u}_N \|_{L^2(0,T;\bm H^2(\Omega))}^2,
\end{align*}
which implies $\|\bm w_N\|_{H^1(0,T; \bm H_\sigma) \cap L^2(0, T; \bm W)} \le C(\Omega, \nu, T, \bm f, g, \bm u_0, M)$ by \lref{2ndest}.
Combining this with the boundedness of $\{\hat {\bm u}_N\}$ and $\{\bar{\bm u}_N\}$ already known, we can find some $\bm u \in H^1(0,T;\bm H_\sigma) \cap L^\infty (0,T;\bm V_\sigma)$, $\bar{\bm u} \in L^\infty(0,T; \bm V_\sigma) \cap L^2(0,T; \bm W)$, $\bm w \in H^1(0,T; \bm H_\sigma) \cap L^2(0,T;\bm W)$ and subsequence $\{N_n \}_{n \in \mathbb{N}}$ such that
\begin{align*}
	\hat{\bm u}_{N_n} \to \bm u
	& \quad {\rm weakly~star~in~} H^1(0,T;\bm H_\sigma) \cap L^\infty(0,T;\bm V_\sigma), \\
	\hat{\bm u}_{N_n} \to \bm u
	& \quad {\rm strongly~in~} L^2(0,T; \bm H_\sigma), \\
	\bar{\bm u}_{N_n} \to \bar{\bm u}
	& \quad {\rm weakly~star~in~} L^\infty(0,T;\bm V_\sigma)\cap L^2 (0,T;\bm W ), \\
	\bm w_{N_n} \to \bm w
	& \quad \text{weakly in } H^1(0,T; \bm H_\sigma) \cap L^2(0,T;\bm W), \\
	\bm w_{N_n} \to \bm w
	& \quad \text{strongly in } C([0,T]; \bm H_\sigma) \cap L^2(0,T; \bm V_\sigma),
\end{align*}
when $n \to \infty$, as a result of weak compactness and the Aubin--Lions theorem.

Now we claim $\bm u = \bar{\bm u} = \bm w$. In fact, since
\begin{equation*}
	\|\bar{\bm u}_N - \hat{\bm u}_N\|_{L^2(0, T; \bm H_\sigma)}^2 = \frac{h_N}3 \sum_{k=1}^N \|\bm u_k - \bm u_{k-1}\|_{\bm H_\sigma}^2 \to 0 \quad (N \to \infty)
\end{equation*}
by direct computation and \lref{1stest}, we have
\begin{equation*}
	\|\bar{\bm u}_{N_n} - {\bm u}\|_{L^2(0, T; \bm H_\sigma)} \le \|\bar{\bm u}_{N_n} - \hat{\bm u}_{N_n}\|_{L^2(0, T; \bm H_\sigma)} + \|\hat{\bm u}_{N_n} - \bm u\|_{L^2(0, T; \bm H_\sigma)} \to 0 \quad (n \to \infty),
\end{equation*}
hence $\bm u = \bar{\bm u}$.
A similar computation based on $\|{\bm w}_{N} - \hat{\bm u}_{N}\|_{L^2(0, T; \bm H_\sigma)}^2 = (h_N/3) \|\bm u_1 - \bm u_{0}\|_{\bm H_\sigma}^2$ shows $\bm u = \bm w$.

We note that these counterparts for the $\bm H^1(\Omega)$-norm also hold, that is,
\begin{equation*}
	\|\nabla(\bar{\bm u}_N - \hat{\bm u}_N)\|_{L^2(0, T; \bm L^2(\Omega))}^2 = \frac{h_N}3 \sum_{k=1}^N \|\nabla(\bm u_k - \bm u_{k-1})\|_{\bm L^2(\Omega)}^2 \to 0, \quad
	\|\nabla({\bm w}_N - \hat{\bm u}_N)\|_{L^2(0, T; \bm L^2(\Omega))}^2 \to 0 \quad (N \to \infty)
\end{equation*}
by \lref{2ndest}. Therefore, we conclude
\begin{equation*}
	\|\bar{\bm u}_{N_n} - \bm u\|_{L^2(0, T; \bm V_\sigma)} \le \|\bar{\bm u}_{N_n} - \hat{\bm u}_{N_n}\|_{L^2(0, T; \bm V_\sigma)}
		+ \|\hat{\bm u}_{N_n} - {\bm w}_{N_n}\|_{L^2(0, T; \bm V_\sigma)}
		+ \|{\bm w}_{N_n} - \bm u\|_{L^2(0, T; \bm V_\sigma)} \to 0 \quad (n \to \infty),
\end{equation*}
which establishes the strong convergence of $\{\bar{\bm u}_{N_n}\}$ in $L^2(0, T; \bm V_\sigma)$.

Now we can proceed completely in the same way as in \pref{prop: sub} and \tref{thm: evolution}, finishing the proof of \tref{thm: evolution weak}.

\section*{Acknowledgments}
The authors are grateful to the anonymous referee for the careful reading of the manuscript and valuable comments that have led to a significant improvement of its quality.
The first author acknowledges the support from the JSPS KAKENHI Grant-in-Aid for Scientific Research(C), Japan, Grant Number 21K03309.
The second author acknowledges the support from the JSPS KAKENHI Grant-in-Aid for Early-Career Scientists and Grant-in-Aid for Scientific Research(C), Japan, Grant Numbers 20K14357 and 24K06860.




\end{document}